\def \RR {\mathbb R}
\def \R {\mathbb R}
\def\eqdef{\overset{\text{def}}{=}}
\newcommand*\circled[1]{\tikz[baseline=(char.base)]{
            \node[shape=circle,draw,inner sep=2pt] (char) {#1};}}
\newtheorem{assumption}{Assumption}
\newtheorem{theorem}{Theorem}[section]
\newtheorem{lemma}[theorem]{Lemma}
\newtheorem{definition}{Definition}[section]
\newcommand{\cD}{{\cal D}}
\newcommand{\EE}{\mathbf{E}}
\def\R{\mathbb{R}}
\def\X{\mathcal X}
\def\R{\mathbb R}
\def\EE{\mathbb E}
\def\dd#1{{\color{black}#1}}
\def\dm#1{{\color{black}#1}} 
\begin{document}

\title[Gradient-Free Methods for Non-Smooth Convex Stochastic Optimization with Heavy-Tailed Noise on Convex Compact]{Gradient-Free Methods for Non-Smooth Convex Stochastic Optimization with Heavy-Tailed Noise on Convex Compact}

\author[1]{\fnm{Nikita} \sur{Kornilov}}\email{kornilov.nm@phystech.edu} 
\author[1,4,5]{\fnm{Alexander} \sur{Gasnikov}}\email{gasnikov@yandex.ru} 
\author[2]{\fnm{Pavel} \sur{Dvurechensky}}\email{pavel.dvurechensky@wias-berlin.de} 
\author[3]{\fnm{Darina} \sur{Dvinskikh}}\email{dmdvinskikh@hse.ru}

 \affil[1]{\orgname{Moscow Institute of Physics and Technology}, \orgaddress{\city{Dolgoprudny}, \country{Russia}}}
 \affil[2]{\orgname{Weierstrass Institute for Applied Analysis and Stochastics}, \orgaddress{\city{Berlin}, \country{Germany}}}
 \affil[3]{\orgname{HSE University}, \orgaddress{\city{Moscow}, \country{Russia}}}
\affil[4]{\orgname{Skoltech}, \orgaddress{\city{Moscow}, \country{Russia}}}
\affil[5]{\orgname{ISP RAS Research Center for Trusted Artificial Intelligence}, \orgaddress{\city{Moscow}, \country{Russia}}}

\maketitle
\begin{abstract}
\dm{We present two easy-to-implement gradient-free/zeroth-order methods to optimize a stochastic non-smooth function accessible only via a black-box.
The methods are built upon  efficient first-order methods in the heavy-tailed case, i.e., when the gradient noise  has infinite variance but bounded $(1+\kappa)$-th moment for some $\kappa \in(0,1]$.
The first algorithm is based on the stochastic mirror descent with a particular class
of uniformly convex mirror maps which is robust to heavy-tailed noise.
The second algorithm is based on the stochastic mirror descent and gradient clipping technique. Additionally, for the objective functions satisfying the $r$-growth condition, faster algorithms are proposed based on these methods and the restart technique. }
\end{abstract}

\keywords{zeroth-order optimization \and derivative-free optimization \and stochastic optimization \and non-smooth problems \and heavy tails \and gradient clipping \and stochastic mirror descent}
\maketitle
\section{Introduction}
We consider stochastic non-smooth convex minimization problem 
\begin{equation}
\label{eq:prst}
    \min\limits_{x \in \mathcal{X} \subset \RR^d} f(x)\eqdef  \EE_{\xi\sim\cD}\left[f(x,\xi)\right], 
\end{equation}
 \dm{where function $f(x, \xi)$ is $M_2(\xi)$-Lipschitz continuous  in $x$    w.r.t. the Euclidean norm,
  $\mathcal{X} $ is a compact convex, and  the expectation $\EE_{\xi\sim\cD}\left[f(x,\xi)\right]$ is w.r.t. random variable $\xi$  with unknown distribution $\cD$.
 We suppose that stochastic realizations of the function values $f(x)$ are  available only through a zeroth-order oracle corrupted by some deterministic (probably adversarial) noise $\delta(x)$ }
\begin{equation}\label{eq:grad_free_setup}
      \phi(x,\xi) \eqdef f(x, \xi) +\delta(x). 
\end{equation}
     We consider two-point zeroth-order oracle setting meaning that for two query points $x,y \in \mathcal \X$ we can evaluate two outputs $\phi(x,\xi)$ and $\phi(y,\xi)$ with the same $\xi$. Function $\phi(x,\xi)$ can be considered as  a  noisy approximation of a Lipschitz function $f(x,\xi)$. 

       Stochastic optimization problems implies that functions $f(x, \xi)$  must have finite mathematical expectation for all $x \in \mathcal{X}$.


\dd{Zeroth-order methods were studied in a wide range of works, see e.g., \cite{spall2005introduction,conn2009introduction} and the references therein. Particularly, under different assumptions on black-box oracle (in the noisy or noiseless setup) the optimal oracle complexity was obtained \cite{duchi2015optimal,gasnikov2016gradient,nesterov2017random,gasnikov2017stochastic,shamir2017optimal,bayandina2018gradient}. This bound is proportional to $d \varepsilon^{-2}$, where $\varepsilon$ is the desired precision to solve problem \eqref{eq:prst} in terms of the function values. For saddle point problems, we refer to papers \cite{beznosikov2020gradient,dvinskikh2022gradient} obtaining the same bound.}
This result is quite expected since the above complexity is $d$ times larger than the complexity of optimal stochastic gradient procedures. Factor $d$ has a natural interpretation since to approximate (stochastic) gradient it suffices to use $d + 1$ function values.\footnote{To say more precisely, it suffices to use $d + 1$  values of $f(x,\xi)$ with the same $\xi$ and different $(d+1)$ points $x$.} This is obvious in the smooth case (see e.g. \cite{gasnikov2022randomized}), and is not so trivial in the non-smooth case \cite{shamir2017optimal}. This result was obtained it  the classical  setting of a finite variance of stochastic gradients:   $\EE_\xi [M(\xi)^2] < \infty$. However, in modern learning problems, 
this condition may be violated. To this end, we aim to relax this assumption and consider heavy-tailed noise with bounded $(1+\kappa)$-th moment for some $\kappa \in(0,1]$, i.e.,  we suppose $\EE_\xi [M(\xi)^{1+\kappa}] < \infty$.  Under this assumption, for the first-order stochastic methods, the optimal oracle complexity is proportional to  $\varepsilon^{-\frac{1+\kappa}{\kappa}}$ \cite{nemirovskij1983problem}. Thus for zeroth-order  oracle we may expect the bound $d\varepsilon^{-\frac{1+\kappa}{\kappa}}$.
\dd{In this paper, we obtain} the bound
$\left(\sqrt{d}/\varepsilon\right)^{\frac{1+\kappa}{\kappa}}$  matching the expected bound only for $\kappa = 1$.
To the best of our knowledge, this poses the following open problem: is the bound
$\left(\sqrt{d}/\varepsilon\right)^{\frac{1+\kappa}{\kappa}}$  optimal in terms of the dependence on $d$? For smooth stochastic convex optimization problems with $(d+1)$-points stochastic zeroth-order oracle the answer is negative and the optimal bound is proportional to $d  \varepsilon^{-\frac{1+\kappa}{\kappa}}$. Thus, for $\kappa \in (0,1)$ our results are \dd{somewhat} surprising since the dependence on $d$ in our bound is very different from the known results for the case  $\kappa = 1$. 
To the best of our knowledge, this paper provides the first known result for gradient-free methods without assuming a finite variance of the stochastic noise. Since we give an accurate analysis, including high-probability bounds,\footnote{We emphasize that these bounds were obtained without any probabilistic assumptions, except $\EE_\xi [M^{1+\kappa}(\xi)] < \infty$!} our results could be of interest even in a very particular case of $\kappa = 1$. In this case, the high-probability bound was previously known only for compactly supported distributions of $f(x,\xi)$ \cite{dvinskikh2022gradient}. That is, even for sub-Gaussian tails \cite{shapiro2021lectures} it was an open question to obtain high-probability bounds for gradient-free methods. The main challenge in obtaining our results is in the combination of the auxiliary gradient-free randomization and the original stochasticity of the oracle in the problem. The known  inequalities on measure concentration  do not allow obtaining the desired sub-Gaussian concentration for the output of the algorithm.

  \dd{
  Gradient clipping technique has become increasingly popular for obtaining convergence guarantees in terms of high probability \cite{nazin2019algorithms,davis2021low,gorbunov2021near}.}
 Starting with the work \cite{nazin2019algorithms} (see also \cite{davis2021low,gorbunov2021near}) one can observe an increased interest of researchers in algorithms that use gradient clipping to be able to obtain high-probability convergence guarantees in stochastic optimization problems with heavy-tailed noise.
In particular, only in the last two years \dd{optimal first-order algorithms were proposed} and the following results were obtained \dd{for their convergence guarantees}:  \textbf{1.} in the  expectation for general proximal setup and non-smooth stochastic convex optimization problems with infinite variance \cite{vural2022mirror}; \textbf{2.} in high-probability for general proximal setup and non-smooth online stochastic convex optimization problems with infinite variance \cite{zhang2022parameter}; \textbf{3.}  in  high-probability for the Euclidean proximal setup and  smooth and non-smooth stochastic convex optimization problems and variational inequalities with infinite variance \cite{sadiev2023high,nguyen2023high,nguyen2023improved}; \textbf{4.}   in high-probability for convergence of optimal variance-adaptive algorithm in the Euclidean  proximal setup for non-smooth stochastic (strongly) convex optimization problems with infinite variance \cite{liu2023stochastic}.  Since the aforementioned results  are strongly correlated with each other, in this paper, we depart from the works \cite{vural2022mirror,zhang2022parameter} to incorporate zero-order oracle into their algorithms. The developed technique, which reduces randomization caused by the gradient-free nature of the oracle to the original stochasticity, allows generalizing the results of other papers considered above in a similar manner. The idea of this reduction is not new and has already been used many times, see e.g. \cite{duchi2015optimal,gasnikov2016gradient,gasnikov2017stochastic,shamir2017optimal}. But, all these works are significantly based on the assumption of finite variance of the stochastic noise. For the infinite noise variance setting, the technique requires significant generalizations, which we make in this paper. We expect, that based on these results it is possible to obtain new results for zero-order algorithms in the smooth setting and also in the setting of one-point feedback. 


 \paragraph{Contribution} 
 \dm{ \begin{enumerate}
    \item  For $d$-dimensional
 optimization, we propose two algorithms  with
 oracle complexity proportional to $\left({\sqrt{d}}/{\varepsilon}\right)^{\frac{1+\kappa}{\kappa}}$. This upper bound is valid under the
maximal admissible level of adversarial noise  proportional to $ {\varepsilon^2}/{\sqrt{d}}$. For the first algorithm the convergence results hold  in expectation
whereas for the second algorithm the results are valid  with high probability.
\item  If additionally the objective satisfies the $r$-growth condition ( this includes strongly convex problems and problems with a sharp minimum), the restart technique for these algorithms gives oracle complexity proportional to $\left(  \sqrt{d}/\varepsilon^{\frac{(r-1)}{r}}\right)^\frac{1+\kappa}{\kappa}$. This  upper bound is valid under the
maximal level of adversarial noise  proportional to $ \varepsilon^{(2-\frac{1}{r})} / \sqrt{d}$.
 \end{enumerate}
}

\paragraph{Organization} \dd{ This paper is organized as follows. Section \ref{sec:grad-free} presents the main objects and notions that are used to construct gradient-free algorithms. 
In Section \ref{sec:robust_ZOSMD}, we present  our first gradient-free algorithm which is based on mirror descent. In Section \ref{sec:ZOCLipping} we present our second gradient-free  algorithm based on gradient clipping. Finally, in Section \ref{sec:restarts}  for the objective functions satisfying the $r$-growth condition, we propose a faster algorithm  using the restart technique. }

\section{Preliminaries} \label{sec:grad-free}

\paragraph{Notations}  For $p \in [1,2]$, we use the $l_p$-norm, i.e.  $\|x\|_p = \left(\sum_{k=1}^d |x_k|^p\right)^{1/p}$. The corresponding dual norm is $\|y\|_{q} = \max_x \{\langle x, y \rangle | \;\|x\|_p \leq 1\}$, where $q$ is defined by the equality $\nicefrac{1}{q} + \nicefrac{1}{p} = 1$. 
We use $\langle x,y \rangle = \sum_{k=1}^d x_ky_k$ to denote the inner product of $x,y \in \RR^d$.
Let $B_{\dm{p'}}^d = \{x \in \R^d\mid \|x\|_{\dm{p'}} \leq 1\}$
and $S_{\dm{p'}}^d = \{x \in \R^d\mid \|x\|_{\dm{p'}} = 1\}$ 
be the  unit  $\ell_{\dm{p'}}$-ball
and  the unit $\ell_{\dm{p'}}$-sphere \dm{with center at 0}, correspondingly.  
 The full expectation of a random variable $X$ is denoted by $\EE [X]$. The expectation w.r.t. random variables $Y_1, \dots, Y_n$   is denoted by $\EE_{Y_1, \dots, Y_n} [X]$.
 The condition expectation w.r.t. $x_k, \dots, x_1$ is refereed to as 
 $\EE[\cdot | x_k, \dots, x_1] \eqdef \EE_{|\leq k}[\cdot]$ for brevity.

 \subsection{Assumptions}
For a convex set $\mathcal{X}\subset \R^d$ and $\tau > 0$, let us  introduce $\mathcal{X}_\tau = \mathcal{X} + \tau B^d_2$. 


\begin{assumption}[Convexity] \label{as: convex}
There exists $ \tau > 0$ such that function $f(x, \xi)$ is convex \dd{w.r.t. $x$} for any $\xi$ on $\mathcal{X}_\tau$.
\end{assumption}
This assumption implies that $f(x)$ is  convex on $\mathcal{X}$.
\begin{assumption}[Lipschitz continuity and boundedness of $(1+\kappa)$-th moment] \label{as: Lipshcitz and bounded}
There exists $ \tau > 0$ such that function $f(x, \xi)$ is $M_2(\xi)$-Lipschitz continuous \dd{w.r.t. $x$} in the  $l_2$-norm, i.e., for all $x_1, x_2 \in \mathcal{X}_\tau$
$$|f(x_1, \xi) - f(x_2, \xi)| \leq M_2(\xi) \|x_1 - x_2\|_2.$$
Moreover, there exist $ \kappa \in (0,1] $ \dd{and  $M_2$} such that $\EE_\xi [M_2(\xi)^{1+\kappa}] \leq M_2^{1+\kappa}.$
\end{assumption}
\begin{lemma}\label{lem: Lipschitz f intro}
    Assumption~\ref{as: Lipshcitz and bounded} implies that $f(x)$ is $M_2$-Lipschitz on $\mathcal{X}$.
\end{lemma}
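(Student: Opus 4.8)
The plan is to push the pointwise Lipschitz estimate of Assumption~\ref{as: Lipshcitz and bounded} through the expectation and then control the resulting random Lipschitz constant $M_2(\xi)$ by its $(1+\kappa)$-th moment. Since $\mathcal{X} \subseteq \mathcal{X}_\tau$, for any $x_1, x_2 \in \mathcal{X}$ the pointwise bound $|f(x_1,\xi) - f(x_2,\xi)| \leq M_2(\xi)\|x_1 - x_2\|_2$ is available for every $\xi$, so there is no issue in restricting the estimate from $\mathcal{X}_\tau$ to $\mathcal{X}$.

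First I would use that $f$ is defined as a full expectation and that the absolute value is convex, so by Jensen's inequality
$$|f(x_1) - f(x_2)| = \left|\EE_\xi\left[f(x_1,\xi) - f(x_2,\xi)\right]\right| \leq \EE_\xi\left[\left|f(x_1,\xi) - f(x_2,\xi)\right|\right].$$
Inserting the pointwise Lipschitz estimate and pulling the deterministic factor $\|x_1 - x_2\|_2$ out of the expectation yields $|f(x_1) - f(x_2)| \leq \EE_\xi[M_2(\xi)]\,\|x_1 - x_2\|_2$.

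It then remains to show $\EE_\xi[M_2(\xi)] \leq M_2$, which is the only genuinely quantitative step. It follows from the monotonicity of moments (Lyapunov's inequality), itself a consequence of Jensen applied to the convex map $t \mapsto t^{1+\kappa}$ on $[0,\infty)$, valid because $\kappa > 0$ and $M_2(\xi) \geq 0$:
$$\EE_\xi[M_2(\xi)] \leq \left(\EE_\xi\left[M_2(\xi)^{1+\kappa}\right]\right)^{\frac{1}{1+\kappa}} \leq \left(M_2^{1+\kappa}\right)^{\frac{1}{1+\kappa}} = M_2.$$
Combining the two displays gives $|f(x_1) - f(x_2)| \leq M_2\|x_1 - x_2\|_2$ for all $x_1, x_2 \in \mathcal{X}$, which is exactly the claim.

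I do not anticipate a real obstacle here, as the statement is elementary; the content lies entirely in recognizing that the first moment is dominated by the $(1+\kappa)$-th moment. The one point worth checking is the well-posedness of the expectations above: the finiteness of $\EE_\xi[M_2(\xi)]$ is guaranteed precisely because the stronger bound $\EE_\xi[M_2(\xi)^{1+\kappa}] \leq M_2^{1+\kappa}$ holds, and the first Jensen step implicitly uses the standing requirement that $f(x)$ has finite expectation for every $x \in \mathcal{X}$.
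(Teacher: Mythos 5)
Your proposal is correct and follows essentially the same route as the paper's proof: Jensen's inequality to pass the absolute value inside the expectation, then the pointwise Lipschitz bound, then Lyapunov's (power-mean) inequality $\EE_\xi[M_2(\xi)] \leq \left(\EE_\xi[M_2(\xi)^{1+\kappa}]\right)^{\frac{1}{1+\kappa}} \leq M_2$. The only additions are your explicit remarks on restricting from $\mathcal{X}_\tau$ to $\mathcal{X}$ and on well-posedness of the expectations, which the paper leaves implicit.
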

The proof can be found in Section \ref{sec: Lemmas} (Lemma \ref{lem: Lipschitz f }).
\begin{assumption}[Boundedness of noise]\label{as: noize}
There exists a constant $\Delta>0$ such that $|\delta(x)| \leq \Delta$ for all $x \in Q$.
\end{assumption}
\paragraph{Randomized smoothing.}

\dm{The main scheme that allows us to develop gradient-free
methods for non-smooth convex problems 
is randomized smoothing \cite{ermoliev1976stochastic,nemirovskij1983problem,spall2005introduction,nesterov2017random,gasnikov2022power} of a non-smooth function $f(x,\xi)$.
  The smooth approximation to a non-smooth function  $f(x,\xi)$ is
defined as }
\begin{equation}\label{hat_f}
    \hat{f}_\tau (x) \overset{\text{def}}{=} \EE_{\dm{\mathbf{u}\dm{,\xi}} } [f(x + \dm{\mathbf{\tau u}} \dm{,\xi})],
\end{equation}
\dm{where $\mathbf{u} \sim  U(B^d_2)$  is a random vector uniformly distributed on the Euclidean unit ball.}

\dm{The next lemma gives estimates for the quality of this approximation.
In contrast to $f(x)$, function $\hat{f}_\tau(x)$ is smooth  and has several useful properties. 
}



\begin{lemma}\cite[ Theorem $2.1$]{gasnikov2022power}\label{lem: hat_f properties intro}
Let Assumptions~\ref{as: convex},\ref{as: Lipshcitz and bounded} hold. Then,
\begin{enumerate}
\item Function $\hat{f}_\tau(x)$ is convex, Lipschitz with constant $M_2$  on $\mathcal{X}$, and satisfies
\begin{equation}\notag
    \sup \limits_{x \in \mathcal{X}} |\hat{f}_\tau(x) - f(x)| \leq \tau M_2.
\end{equation}
    \item 
Function $\hat{f}_\tau(x)$ is differentiable on $\mathcal{X}$ with the following gradient
\begin{equation}\notag
\nabla \hat{f}_\tau (x) = \EE_\mathbf{e}\left[\frac{d}{\tau} f(x + \tau \mathbf{e}) \mathbf{e}\right],
\end{equation}
\dm{where $\mathbf{e} \sim U(\dm{S^d_2})$ is a random vector uniformly distributed on the 
Euclidean unit sphere. }
\end{enumerate}
\end{lemma}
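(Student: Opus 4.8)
The plan is to first reduce the statement to a deterministic claim about the ball-smoothing of $f(x)=\EE_\xi[f(x,\xi)]$. By Assumption~\ref{as: Lipshcitz and bounded} and Jensen's inequality we have $\EE_\xi[M_2(\xi)]\le(\EE_\xi[M_2(\xi)^{1+\kappa}])^{1/(1+\kappa)}\le M_2<\infty$, so $f(\cdot,\xi)$ is integrable and Fubini's theorem justifies interchanging the two expectations in \eqref{hat_f}, giving $\hat f_\tau(x)=\EE_{\mathbf{u}}[f(x+\tau\mathbf{u})]$ with $\mathbf{u}\sim U(B^d_2)$. It then suffices to analyze the ball-smoothing of the convex, $M_2$-Lipschitz function $f$ (Lemma~\ref{lem: Lipschitz f intro}, whose argument in fact gives the $M_2$-Lipschitz bound on all of $\mathcal{X}_\tau$). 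For item (1) I would read off all three properties from this averaged representation. Convexity holds because each map $x\mapsto f(x+\tau\mathbf{u})$ is convex on $\mathcal{X}$ (indeed $x+\tau\mathbf{u}\in\mathcal{X}_\tau$ whenever $x\in\mathcal{X}$, $\mathbf{u}\in B^d_2$) and expectation preserves convexity. The Lipschitz bound follows by pulling the absolute value inside, $|\hat f_\tau(x_1)-\hat f_\tau(x_2)|\le\EE_{\mathbf{u}}|f(x_1+\tau\mathbf{u})-f(x_2+\tau\mathbf{u})|\le M_2\|x_1-x_2\|_2$, and the approximation estimate similarly from $|\hat f_\tau(x)-f(x)|\le\EE_{\mathbf{u}}|f(x+\tau\mathbf{u})-f(x)|\le M_2\,\EE_{\mathbf{u}}[\tau\|\mathbf{u}\|_2]\le\tau M_2$.

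Item (2) is the substantive part, and I would split it into establishing differentiability with a volume-integral form of the gradient, followed by a surface reduction. Fixing $x$ and a coordinate direction $e_i$, the difference quotients $\tfrac1h\big(f(x+he_i+\tau\mathbf{u})-f(x+\tau\mathbf{u})\big)$ are bounded in absolute value by $M_2$ by Lipschitzness, and by Rademacher's theorem combined with Fubini they converge, as $h\to0$, to $\partial_i f(x+\tau\mathbf{u})$ for almost every $\mathbf{u}$. Dominated convergence then shows $\hat f_\tau$ is differentiable with $\nabla\hat f_\tau(x)=\EE_{\mathbf{u}}[\nabla f(x+\tau\mathbf{u})]$, where $\nabla f$ is the almost-everywhere-defined gradient. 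The change of variables $y=x+\tau\mathbf{u}$ rewrites this as $\nabla\hat f_\tau(x)=\tfrac1{\tau^dV_d}\int_{B(x,\tau)}\nabla f(y)\,dy$, with $V_d$ the volume of the Euclidean unit ball.

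To obtain the claimed sphere average I would apply the divergence (Gauss--Green) theorem componentwise. Writing $\partial_i f=\mathrm{div}(f\,e_i)$ and integrating over $B(x,\tau)$ with outward unit normal $\mathbf{n}(y)=(y-x)/\tau$ gives $\int_{B(x,\tau)}\nabla f(y)\,dy=\int_{S(x,\tau)}f(y)\tfrac{y-x}{\tau}\,dS(y)$. The change of variables $y=x+\tau\mathbf{e}$, under which $dS(y)=\tau^{d-1}\,dS(\mathbf{e})$ and $(y-x)/\tau=\mathbf{e}$, together with the classical identity $\mathrm{Area}(S^d_2)=d\,V_d$ relating the surface area of the unit sphere to the volume of the unit ball, collapses all constants to $d/\tau$ and yields exactly $\nabla\hat f_\tau(x)=\EE_{\mathbf{e}}\big[\tfrac{d}{\tau}f(x+\tau\mathbf{e})\,\mathbf{e}\big]$ with $\mathbf{e}\sim U(S^d_2)$.

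I expect the main obstacle to be the rigorous use of the divergence theorem for the non-smooth $f$: since $f$ is only Lipschitz, $\nabla f$ exists merely almost everywhere, so the classical $C^1$ Gauss--Green formula does not apply verbatim. I would handle this either by invoking the Gauss--Green theorem in its Sobolev ($W^{1,\infty}$) formulation, or—more elementarily—by first mollifying $f$ with a smooth kernel, proving the identity for the resulting $C^\infty$ approximation, and then passing to the limit, using uniform convergence of the mollifications together with the uniform $M_2$-Lipschitz bound to control both the volume and the surface integrals simultaneously.
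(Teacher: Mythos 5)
The paper itself contains no proof of this lemma: it is imported by citation from \cite[Theorem 2.1]{gasnikov2022power}, so there is no internal argument to compare against. Your blind proof is, in outline, exactly the standard argument behind that cited result (tracing back to Nemirovski--Yudin and to Flaxman--Kalai--McMahan's spherical-smoothing lemma): reduce to the ball average of the deterministic $f$ via Fubini, obtain convexity, Lipschitzness and the $\tau M_2$ approximation bound by elementary estimates, and derive the sphere-average gradient formula from the divergence theorem together with the identity $\mathrm{Area}(S^d_2)=d\,V_d$. The Fubini reduction, all of item (1), and the volume-to-surface computation with its constants are correct.

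Two points in item (2) deserve attention. First, your difference-quotient/dominated-convergence step only yields existence of the partial derivatives $\partial_i \hat f_\tau(x)=\EE_{\mathbf{u}}[\partial_i f(x+\tau\mathbf{u})]$; existence of partials alone is not differentiability, so one more observation is needed. Either invoke the fact that a \emph{convex} function possessing all partial derivatives at a point is (Fr\'echet) differentiable there — you have already established convexity of $\hat f_\tau$ — or note that $x\mapsto\int_{B(x,\tau)}\partial_i f(y)\,dy$ is continuous because $\partial_i f\in L^\infty$ and the symmetric difference of nearby balls has vanishing Lebesgue measure, so that $\hat f_\tau\in C^1$. Either one-line remark closes this gap. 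Second, you correctly identify that the classical $C^1$ Gauss--Green formula does not apply to the merely Lipschitz $f$, and both repairs you propose (the $W^{1,\infty}$ form of Gauss--Green, or mollification followed by a limit controlled by the uniform $M_2$-Lipschitz bound) are standard and valid; the mollification route is the one usually written out. A final, minor technicality is inherited from the statement itself rather than from your argument: differentiability at boundary points of $\mathcal{X}$ implicitly requires $f(\cdot,\xi)$ to be defined slightly beyond $\mathcal{X}_\tau$, since $B(x+h e_i,\tau)$ may leave $\mathcal{X}_\tau$ for $x\in\partial\mathcal{X}$; the cited theorem glosses over this in the same way, and it is harmless if the assumptions are read as holding on a marginally larger enlargement of $\mathcal{X}$.
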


\dm{\paragraph{Gradient estimate.}
To employ first-order algorithms in the zero-order oracle setting, we 
use the following gradient estimate}
\begin{eqnarray}\label{g}
g(x, \xi, \mathbf{e}) &=& \frac{d}{2\tau}(\phi(x + \tau \mathbf{e}, \xi) - \phi(x - \tau \mathbf{e}, \xi)) \mathbf{e} \notag \\
&=& \frac{d}{2\tau}(f(x + \tau \mathbf{e}, \xi) + \delta(x + \tau \mathbf{e}) - f(x - \tau \mathbf{e}, \xi) - \delta(x - \tau \mathbf{e})) \mathbf{e}.
\end{eqnarray}
We can notice that this vector will be an unbiased estimate of 
the gradient of $\hat{f}_\tau(x)$ if there is no adversarial noise $\Delta = 0$. Moreover, this vector has bounded $(1 + \kappa)$-th moment, see the next lemma.
\begin{lemma}\label{lem: grad norm 1 +k intro}
Under Assumptions \ref{as: convex}, \ref{as: Lipshcitz and bounded} and \ref{as: noize}, for $q \in [2, +\infty)$, we have
\begin{equation*}
    \EE_{\xi, \mathbf{e}}[\|g(x,\xi,\mathbf{e})\|_q^{1+\kappa}]\leq 2^{\kappa}\left(\frac{\sqrt{d}}{2^{1/4}} a_{q}M_2\right)^{1+\kappa} + 2^{\kappa}\left(\frac{da_{q}\Delta}{\tau}\right)^{1+\kappa}  \eqdef \sigma_{q}^{1+\kappa},
    \end{equation*}
where $a_q \eqdef d^{\frac1q - \frac12}  \min \{ \sqrt{32\ln d - 8} , \sqrt{2q - 1}\}.$
\end{lemma}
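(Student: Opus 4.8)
The plan is to decompose the gradient estimate into a \emph{signal} part coming from $f$ and a \emph{noise} part coming from the adversarial perturbation $\delta$, bound the two separately, and recombine. Writing $g = g_f + g_\delta$ with
\[
g_f = \frac{d}{2\tau}\bigl(f(x+\tau\mathbf{e},\xi)-f(x-\tau\mathbf{e},\xi)\bigr)\mathbf{e},\qquad g_\delta = \frac{d}{2\tau}\bigl(\delta(x+\tau\mathbf{e})-\delta(x-\tau\mathbf{e})\bigr)\mathbf{e},
\]
I would first apply the triangle inequality $\|g\|_q\le\|g_f\|_q+\|g_\delta\|_q$ together with the elementary convexity bound $(a+b)^{1+\kappa}\le 2^{\kappa}(a^{1+\kappa}+b^{1+\kappa})$ (valid since $1+\kappa\in(1,2]$). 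This already produces the prefactor $2^{\kappa}$ and reduces the claim to bounding $\EE_{\xi,\mathbf{e}}\|g_f\|_q^{1+\kappa}$ and $\EE_{\xi,\mathbf{e}}\|g_\delta\|_q^{1+\kappa}$ by the two summands of $\sigma_q^{1+\kappa}$. Two standard tools are used throughout: the measure-concentration estimate $\EE_\mathbf{e}\|\mathbf{e}\|_q^2\le a_q^2$ for $\mathbf{e}\sim U(S^d_2)$, and the power-mean (Jensen) inequality $\EE[Z^{1+\kappa}]\le(\EE[Z^2])^{(1+\kappa)/2}$, available because $1+\kappa\le2$.

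The noise term is the easy one. Since $|\delta|\le\Delta$ (Assumption~\ref{as: noize}), we have $|\delta(x+\tau\mathbf{e})-\delta(x-\tau\mathbf{e})|\le 2\Delta$, hence the crude bound $\|g_\delta\|_q\le\frac{d\Delta}{\tau}\|\mathbf{e}\|_q$, and therefore
\[
\EE_{\mathbf{e}}\|g_\delta\|_q^{1+\kappa}\le\Bigl(\frac{d\Delta}{\tau}\Bigr)^{1+\kappa}\EE_{\mathbf{e}}\|\mathbf{e}\|_q^{1+\kappa}\le\Bigl(\frac{d\Delta}{\tau}\Bigr)^{1+\kappa}\bigl(\EE_{\mathbf{e}}\|\mathbf{e}\|_q^2\bigr)^{\frac{1+\kappa}{2}}\le\Bigl(\frac{d a_q\Delta}{\tau}\Bigr)^{1+\kappa},
\]
which is exactly the second summand. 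No cancellation is exploited here, which is appropriate: $\delta$ is arbitrary, so only its magnitude is available.

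The signal term is where the real work lies, and it is the main obstacle. A crude estimate $|f(x+\tau\mathbf{e},\xi)-f(x-\tau\mathbf{e},\xi)|\le 2\tau M_2(\xi)\|\mathbf{e}\|_2=2\tau M_2(\xi)$ would only produce a factor $d$ (not $\sqrt d$) in the final bound and is therefore too lossy. Instead I would keep the increment and the direction $\mathbf{e}$ \emph{coupled} and establish the sharper second-moment estimate
\[
\EE_{\mathbf{e}}\Bigl[\bigl(f(x+\tau\mathbf{e},\xi)-f(x-\tau\mathbf{e},\xi)\bigr)^2\|\mathbf{e}\|_q^2\Bigr]\le\frac{2\sqrt2\,\tau^2 M_2(\xi)^2 a_q^2}{d},
\]
whose crucial feature is the extra factor $1/d$: for an $M_2(\xi)$-Lipschitz function the symmetric increment along a uniformly random direction behaves like a directional derivative and concentrates at scale $\tau M_2(\xi)/\sqrt d$, not at scale $\tau M_2(\xi)$. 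This is precisely the measure-concentration-on-the-sphere input, and obtaining the joint estimate with the weight $\|\mathbf{e}\|_q^2$ and the correct constant is the delicate step.

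Granting this estimate, the power-mean inequality gives, conditionally on $\xi$,
\[
\EE_{\mathbf{e}}\|g_f\|_q^{1+\kappa}=\Bigl(\tfrac{d}{2\tau}\Bigr)^{1+\kappa}\EE_{\mathbf{e}}\bigl[|f(x+\tau\mathbf{e},\xi)-f(x-\tau\mathbf{e},\xi)|^{1+\kappa}\|\mathbf{e}\|_q^{1+\kappa}\bigr]\le\Bigl(\tfrac{\sqrt d}{2^{1/4}}a_q M_2(\xi)\Bigr)^{1+\kappa},
\]
where the $2^{-1/4}$ is exactly what the constant $2\sqrt2$ above contributes after raising to the power $(1+\kappa)/2$. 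Finally, taking $\EE_\xi$ and invoking $\EE_\xi[M_2(\xi)^{1+\kappa}]\le M_2^{1+\kappa}$ (Assumption~\ref{as: Lipshcitz and bounded}) turns $M_2(\xi)$ into $M_2$, yielding the first summand. Adding the two contributions and restoring the $2^{\kappa}$ prefactor reproduces $\sigma_q^{1+\kappa}$, completing the argument.
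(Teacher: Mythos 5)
Your overall route coincides with the paper's: the same split of $g$ into an $f$-part and a $\delta$-part via the convexity inequality \eqref{inq}, the same crude treatment of the $\delta$-part (your power-mean step and the paper's Cauchy--Schwarz step are interchangeable there, both giving $(da_q\Delta/\tau)^{1+\kappa}$), and the same reliance on concentration of measure on the sphere to win the factor $1/d$ in the $f$-part. Your formal variation --- bounding a coupled \emph{second} moment and then using $\EE[Z^{1+\kappa}]\le(\EE[Z^2])^{(1+\kappa)/2}$, instead of the paper's direct manipulation of $(1+\kappa)$-th moments --- is legitimate and, as your own bookkeeping shows, reproduces exactly the constant $2^{-1/4}$.

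The genuine gap is that your central display, $\EE_{\mathbf{e}}\bigl[(f(x+\tau\mathbf{e},\xi)-f(x-\tau\mathbf{e},\xi))^2\|\mathbf{e}\|_q^2\bigr]\le 2\sqrt2\,\tau^2M_2(\xi)^2a_q^2/d$, is asserted rather than proved --- you yourself call it ``the delicate step'' --- and this inequality is where essentially all of the technical content of the lemma lives. The heuristic that ``the symmetric increment behaves like a directional derivative'' is not a proof, and it omits the one idea that makes a concentration argument applicable at all: \emph{centering}. Concentration on the sphere controls deviations of a Lipschitz function from its mean, so one must write $f(x+\tau\mathbf{e},\xi)-f(x-\tau\mathbf{e},\xi)=\bigl(f(x+\tau\mathbf{e},\xi)-\alpha(\xi)\bigr)-\bigl(f(x-\tau\mathbf{e},\xi)-\alpha(\xi)\bigr)$ with $\alpha(\xi)=\EE_{\mathbf{e}}[f(x+\tau\mathbf{e},\xi)]$, use $(a-b)^2\le2a^2+2b^2$ and the symmetry $\mathbf{e}\mapsto-\mathbf{e}$ to reduce to $4\,\EE_{\mathbf{e}}\bigl[(f(x+\tau\mathbf{e},\xi)-\alpha(\xi))^2\|\mathbf{e}\|_q^2\bigr]$, decouple the weight $\|\mathbf{e}\|_q^2$ from the increment by Cauchy--Schwarz, and then invoke the two concentration inputs that the paper isolates as separate lemmas: Lemma \ref{3L}, i.e.\ the fourth-moment bound $\EE_{\mathbf{e}}\bigl[(f(x+\tau\mathbf{e},\xi)-\alpha(\xi))^4\bigr]\le\bigl(b\tau^2M_2(\xi)^2/d\bigr)^2$ with $b=1/\sqrt2$ obtained by integrating the sub-Gaussian tail on the sphere, and Lemma \ref{upper bounds}, which gives $\EE_{\mathbf{e}}\|\mathbf{e}\|_q^4\le a_q^4$ (your $\EE_{\mathbf{e}}\|\mathbf{e}\|_q^2\le a_q^2$ also needs this lemma plus Jensen). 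Carried out, this argument yields the constant $4b=2\sqrt2$, exactly what you claimed, so your plan is internally consistent; but as written it black-boxes the very step the lemma is about, and none of the machinery needed to justify it (centering at the sphere average, symmetrization, Cauchy--Schwarz decoupling, tail integration) appears anywhere in your proposal.
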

The proof can be found in the
\dm{Appendix} \ref{sec: Lemmas}.
 \section{ \dm{ First Algorithm: \texttt{ZO-RSMD} }}
\label{sec:robust_ZOSMD}
\dm{In this section, we present our first gradient-free algorithm which is built upon  mirror descent algorithm  
with  uniformly convex mirror map  from \cite{vural2022mirror}. Our algorithm as well as algorithm from \cite{vural2022mirror}  is robust to heavy-tailed noise.   Firstly we provide 
 mirror descent algorithm  
with  uniformly convex mirror map  from \cite{vural2022mirror} and then we present its zeroth-order version.}

\subsection{Robust Stochastic Mirror Descent (\texttt{RSMD})}
\label{sec:robustSMD}
 \dm{ Now we present convergence results for first-order algorithm from \cite{vural2022mirror} called stochastic mirror descent algorithm  
with  uniformly convex mirror map (\texttt{RSMD}). It is based on  stochastic mirror descent algorithm  \cite{nemirovskij1983problem} and the notion of uniform convexity (to be determined further).}
%
%
\begin{definition}[Uniform convexity] Consider a differentiable convex function $\psi : \RR^d \rightarrow \RR$, an exponent $r \geq 2$, and a constant $K > 0$. Then, $\psi$ is called $(K,r)$-uniformly convex w.r.t. the  $\ell_p$-norm if, for any $x,y \in \RR^d$,
\begin{equation}\label{def: uniform convex}
\psi(y) - \psi(x)- \langle  \nabla \psi(x), y -x \rangle \geq \frac{K}{r}\|x-y\|^r_p.
\end{equation}
\end{definition}
When $r=2$ the definition of $(K,r)$-uniform convexity is equivalent to $K$-strongly convexity. Examples of functions when $r>2$ can be obtained from the next lemma. 

\begin{lemma}\label{func}
For $\kappa\in (0,1], q \in [1 + \kappa, \infty)$ and $p$ such that $ \nicefrac{1}{q} + \nicefrac{1}{p} = 1$, we define 
\begin{equation}\label{eq: K_q def}
    K_q  \eqdef 10\max\left\{ 1, (q-1)^{\frac{1+\kappa}{2}}\right\}.
\end{equation}
Then,
\begin{equation}\label{eq: phi_p def}
    \phi_p(x) \eqdef \frac{\kappa}{1 + \kappa} \|x\|_{p}^{\frac{1+\kappa}{\kappa}}
\end{equation}
is $\left( K_q^{-\frac1\kappa}, \frac{1+\kappa}{\kappa} \right)$-uniformly convex w.r.t. the $\ell_p$-norm.

\end{lemma}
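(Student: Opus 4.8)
The plan is to establish the uniform convexity inequality \eqref{def: uniform convex} for $\psi = \phi_p$ with exponent $\frac{1+\kappa}{\kappa}$ and constant $K_q^{-1/\kappa}$ by reducing it to a known one-dimensional/scalar inequality and a known smoothness (gradient-Lipschitz type) estimate for the power of the $\ell_p$-norm. First I would recall the standard fact from the theory of uniformly convex and uniformly smooth spaces: for $s \geq 2$, the function $x \mapsto \frac{1}{s}\|x\|_p^s$ is uniformly convex of order $s$ with respect to $\|\cdot\|_p$, with a uniform-convexity constant controlled by $(p-1)$ (equivalently by $(q-1)$ after passing to the conjugate exponent). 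The target exponent here is $s = \frac{1+\kappa}{\kappa}$, which indeed satisfies $s \geq 2$ precisely when $\kappa \in (0,1]$, so the regime is consistent.

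The key steps, in order, would be as follows. \textbf{Step 1:} Reduce the multidimensional Bregman-type inequality to a two-point estimate by fixing $x,y$ and, if convenient, working along the segment joining them; this isolates the essential difficulty into a scalar computation involving $t \mapsto \|x + t(y-x)\|_p$. \textbf{Step 2:} Invoke (or reprove) the sharp uniform convexity of $\frac{1}{s}\|\cdot\|_p^s$ in $\ell_p$. The cleanest route is the classical Clarkson-type / Ball–Carlen–Lieb inequalities for $\ell_p$ spaces, which give a two-point inequality of the form $\|x\|_p^s - \|y\|_p^s - \langle s\|y\|_p^{s-2}y, x-y\rangle \geq c_p \|x-y\|_p^s$ with an explicit $c_p$ depending on $p$ and $s$. \textbf{Step 3:} Translate the constant: since $1 < p \leq 2$ here (because $q \geq 1+\kappa \geq 1$ and $\tfrac1p+\tfrac1q=1$, so $p$ lies in the conjugate range), express the constant in terms of $q-1$ and verify it matches $K_q^{-1/\kappa} = \left(10\max\{1,(q-1)^{(1+\kappa)/2}\}\right)^{-1/\kappa}$. \textbf{Step 4:} Fold in the prefactor $\frac{\kappa}{1+\kappa}$ defining $\phi_p$: since $\phi_p(x) = \frac{\kappa}{1+\kappa}\|x\|_p^{s}$ with $s = \frac{1+\kappa}{\kappa} = \frac{1}{s-1}\cdot s$, the normalization $\frac{\kappa}{1+\kappa} = \frac1s$ makes $\phi_p = \frac1s\|\cdot\|_p^s$ exactly, so the uniform convexity constant for $\phi_p$ is precisely $c_p/s \cdot s = c_p$ up to the bookkeeping, and one checks the final constant equals $K_q^{-1/\kappa}$.

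I expect the main obstacle to be \textbf{Step 3}, the quantitative tracking of the uniform-convexity constant and showing it dominates $K_q^{-1/\kappa}$ with the specific factor $10$ and the exponent $(1+\kappa)/2$ on $(q-1)$. The qualitative uniform convexity of $\frac1s\|\cdot\|_p^s$ is classical, but the explicit numerical constant requires either a careful application of the Clarkson inequalities with exponent $s$ rather than $p$, or an interpolation argument that converts the $\ell_p$ modulus of convexity (which scales like $(p-1)$, equivalently $1/(q-1)$) into a bound on the $(K,s)$-uniform-convexity constant. The subtlety is that the order of uniform convexity ($s = \frac{1+\kappa}{\kappa}$) differs from the natural exponent $p$ of the norm, so one must combine the power-mean/Clarkson estimate with Young's inequality to absorb the mismatch, and this is where the $\max\{1,(q-1)^{(1+\kappa)/2}\}$ structure and the absolute constant $10$ emerge. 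By contrast, Steps 1, 2, and 4 are essentially structural and I expect them to be routine once the scalar inequality and its constant are in hand; most likely the authors simply cite the constant from the uniform smoothness/convexity literature (e.g., results on power functions of $\ell_p$ norms) and verify the normalization, rather than rederiving Clarkson's inequality from scratch.
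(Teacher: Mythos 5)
First, a fact about the comparison itself: the paper contains \emph{no proof} of Lemma \ref{func} at all — the lemma is imported wholesale from \cite{vural2022mirror}, the same source as Theorem \ref{MD convergence} and as the ``Proposition 9'' cited inside Lemma \ref{lem: Jensen for norm}. So your closing guess (that the authors cite rather than rederive) is exactly what happens here. The standard proof in that literature is dual to the route you propose: one shows that the Fenchel conjugate $\phi_p^*(y) = \frac{1}{1+\kappa}\|y\|_q^{1+\kappa}$ is uniformly smooth of degree $1+\kappa$ w.r.t. $\|\cdot\|_q$ with constant $K_q$ (i.e., has $\kappa$-H\"older gradient), and then invokes the uniform convexity/uniform smoothness duality, which converts $(K_q,\,1+\kappa)$-uniform smoothness of $\phi_p^*$ into $\left(K_q^{-1/\kappa}, \frac{1+\kappa}{\kappa}\right)$-uniform convexity of $\phi_p$. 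In that formulation the two branches of $K_q$ appear transparently: for $q \geq 2$ one composes the $(q-1)$-smoothness of $\frac12\|\cdot\|_q^2$ with $t \mapsto t^{(1+\kappa)/2}$, producing $(q-1)^{(1+\kappa)/2}$; for $q \in [1+\kappa, 2]$ an absolute constant suffices.

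Second — and this is a genuine gap in your sketch — Step 3 rests on a false claim: it is not true that $1 < p \leq 2$. The hypothesis is only $q \geq 1+\kappa$, and since $\kappa \leq 1$ this allows $q < 2$, in which case $p = q/(q-1) > 2$; in fact $p$ ranges over the whole interval $\left(1, \frac{1+\kappa}{\kappa}\right]$. (The inference ``$q \geq 1+\kappa \geq 1$, hence $p$ is in the conjugate range $(1,2]$'' is a non sequitur; $p \leq 2$ requires $q \geq 2$.) This is not cosmetic: your constant translation ``the $\ell_p$ modulus of convexity scales like $(p-1)$, equivalently $1/(q-1)$'' is the power-type-2 estimate, which is unavailable when $p > 2$ — there the modulus of convexity of $\ell_p$ is of power type $p$, and $\frac12\|\cdot\|_p^2$ has no $(p-1)$-type strong convexity. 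That regime, $q \in [1+\kappa, 2)$, is precisely the branch where $\max\{1,(q-1)^{(1+\kappa)/2}\} = 1$, so your argument as written produces no bound exactly where $K_q = 10$. The repair requires a case split: for $q \geq 2$ (so $p \leq 2$) proceed as you propose, converting 2-uniform convexity with constant $p-1 = 1/(q-1)$ into degree-$s$ uniform convexity with constant of order $(q-1)^{-s/2} = (q-1)^{-\frac{1+\kappa}{2\kappa}}$, matching $K_q^{-1/\kappa}$; for $q \in [1+\kappa,2)$ (so $p > 2$) use instead the degree-$p$ Clarkson-type uniform convexity of $\|\cdot\|_p^p$ together with $s = \frac{1+\kappa}{\kappa} \geq p$ — note that this inequality $p \leq s$ is the true role of the hypothesis $q \geq 1+\kappa$, which you misread as forcing $p \leq 2$. (A minor further slip: the identity ``$s = \frac{1}{s-1}\cdot s$'' in your Step 4 is false, though the conclusion you actually need, $\frac{\kappa}{1+\kappa} = \frac1s$, is correct.)
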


Now we describe robust stochastic mirror descent (RSMD) algorithm \cite{vural2022mirror}. Let function $\Psi: \RR^d \rightarrow \RR$  be $(K,r)$-uniformly convex w.r.t. the $\ell_p$-norm.
We denote its Fenchel conjugate and its Bregman divergence respectively as
$$\Psi^*(y)= \sup\limits_{x \in \RR^d} \{\langle x,y \rangle - \Psi(x) \} \quad \text{and} \quad  D_{\Psi}(y,x) = \Psi(y) - \Psi(x) - \langle \nabla \Psi(x), y -x \rangle.$$
For a given stepsize $\nu$  and gradient $g_{k+1}$, the
updates of RSMD are defined as follows:
\begin{equation}\label{MD}
y_{k+1} = \nabla(\Psi^*) (\nabla \Psi(x_k) - \nu g_{k+1}), \quad x_{k+1} = \arg \min\limits_{x \in \mathcal{X}} D_{\Psi}(x, y_{k+1}).
\end{equation}
Using the assumptions on the function $\Psi$, it can be proved that the updates are well-defined and that $(\nabla \Psi)^{-1} = \nabla \Psi^*$. The map $\nabla \Psi$ \dd{is referred to as the
mirror map.}
The next theorem presents the convergence guarantee for the RSMD. Let \[x^* = \arg\min \limits_{x \in \mathcal{X}} f(x).\]


\begin{theorem} \cite[Theorem $6$]{vural2022mirror}\label{MD convergence}
Consider some $\kappa \in (0,1] , p \in [1, \infty]$ and prox-function $\Psi_p$ which is $\left( 1, \frac{1+\kappa}{\kappa}\right)$-uniformly convex w.r.t. $p$ norm. Then, for the SMD Algorithm outlined in  \eqref{MD}, after $T$ iterations with any $g_k \in \RR^d, k \in \overline{1, T}$ and starting point $x_0 =\arg\min\limits_{x \in \mathcal{X}} \Psi_p(x)$ we have
\begin{equation}\label{eq: MD conv}\frac1T \sum \limits_{k=0}^{T-1} \langle g_{k+1}, x_k  -x^* \rangle \leq \frac{\kappa}{\kappa +1} \frac{R_{0}^{\frac{1+\kappa}{\kappa}} }{\nu T} + \frac{\nu^{\kappa}}{1+\kappa} \frac{1}{T} \sum \limits_{k=0}^{T-1} \|g_{k+1}\|^{1+\kappa}_q,\end{equation} 
where $R_{0}^{\frac{1+\kappa}{\kappa}} \eqdef  \frac{1+\kappa}{\kappa}  D_{\Psi_{p}}(x^*, x_0)$ is the distance between starting point $x_0$ and solution $x^*$.
\end{theorem}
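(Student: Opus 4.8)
The plan is to run the standard mirror-descent potential argument, with the only nonstandard ingredient being that the mirror map is $\left(1,\tfrac{1+\kappa}{\kappa}\right)$-uniformly convex rather than strongly convex, so the usual quadratic terms get replaced by terms of order $\tfrac{1+\kappa}{\kappa}$. The goal of each iteration is to produce the one-step inequality
\begin{equation*}
\nu \langle g_{k+1}, x_k - x^* \rangle \leq D_{\Psi_p}(x^*, x_k) - D_{\Psi_p}(x^*, x_{k+1}) + \frac{\nu^{1+\kappa}}{1+\kappa}\|g_{k+1}\|_q^{1+\kappa},
\end{equation*}
after which summing over $k$ and telescoping gives the claim.

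First I would establish this one-step bound. The key algebraic fact is the three-point identity for Bregman divergences,
\begin{equation*}
\langle \nabla\Psi_p(x_k) - \nabla\Psi_p(y_{k+1}),\, y_{k+1} - x^* \rangle = D_{\Psi_p}(x^*, x_k) - D_{\Psi_p}(x^*, y_{k+1}) - D_{\Psi_p}(y_{k+1}, x_k),
\end{equation*}
which I would verify directly from the definition of $D_{\Psi_p}$. Combining it with the mirror step in \eqref{MD}, which gives $\nabla\Psi_p(x_k) - \nabla\Psi_p(y_{k+1}) = \nu g_{k+1}$ (using $(\nabla\Psi_p)^{-1} = \nabla\Psi_p^*$), and splitting $\langle g_{k+1}, x_k - x^*\rangle = \langle g_{k+1}, x_k - y_{k+1}\rangle + \langle g_{k+1}, y_{k+1} - x^*\rangle$, I obtain
\begin{equation*}
\nu\langle g_{k+1}, x_k - x^*\rangle = D_{\Psi_p}(x^*, x_k) - D_{\Psi_p}(x^*, y_{k+1}) + \big[\nu\langle g_{k+1}, x_k - y_{k+1}\rangle - D_{\Psi_p}(y_{k+1}, x_k)\big].
\end{equation*}
Two facts then finish the step. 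The generalized Pythagorean inequality for the Bregman projection $x_{k+1}=\arg\min_{x\in\mathcal{X}} D_{\Psi_p}(x, y_{k+1})$ yields $D_{\Psi_p}(x^*, x_{k+1}) \leq D_{\Psi_p}(x^*, y_{k+1})$, so I may replace $y_{k+1}$ by $x_{k+1}$ in the negative term. For the bracketed remainder I apply H\"older, $\langle g_{k+1}, x_k - y_{k+1}\rangle \leq \|g_{k+1}\|_q\|x_k - y_{k+1}\|_p$, together with uniform convexity $D_{\Psi_p}(y_{k+1}, x_k) \geq \tfrac{\kappa}{1+\kappa}\|y_{k+1}-x_k\|_p^{\frac{1+\kappa}{\kappa}}$ (from \eqref{def: uniform convex} with $K=1$, $r=\tfrac{1+\kappa}{\kappa}$), and then maximize $\nu\|g_{k+1}\|_q\, t - \tfrac{\kappa}{1+\kappa}t^{\frac{1+\kappa}{\kappa}}$ over $t = \|x_k - y_{k+1}\|_p \geq 0$.

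The heart of the computation, and the step I expect to be most delicate, is this last optimization, i.e.\ the Young-type inequality with conjugate exponents $\tfrac{1+\kappa}{\kappa}$ and $1+\kappa$. The stationarity condition gives the optimal $t = (\nu\|g_{k+1}\|_q)^{\kappa}$, and because $\kappa\cdot\tfrac{1+\kappa}{\kappa} = 1+\kappa$ the two powers collapse to the single term $\tfrac{\nu^{1+\kappa}}{1+\kappa}\|g_{k+1}\|_q^{1+\kappa}$; keeping track of the constants $\tfrac{\kappa}{1+\kappa}$ and $\tfrac{1}{1+\kappa}$ so that they land exactly as in the statement is the only real bookkeeping hazard. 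This establishes the one-step inequality. Summing over $k=0,\dots,T-1$, the Bregman terms telescope to $D_{\Psi_p}(x^*,x_0) - D_{\Psi_p}(x^*,x_T) \leq D_{\Psi_p}(x^*,x_0)$ since $D_{\Psi_p}\geq 0$; dividing by $\nu T$ and substituting $D_{\Psi_p}(x^*,x_0) = \tfrac{\kappa}{1+\kappa}R_0^{\frac{1+\kappa}{\kappa}}$ reproduces \eqref{eq: MD conv} verbatim. The choice $x_0 = \arg\min_x \Psi_p(x)$ plays no role beyond fixing the meaning of $R_0$.
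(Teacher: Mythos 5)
Your proposal is correct: the one-step bound via the three-point identity, the Bregman--Pythagoras inequality for the projection step, and the Fenchel--Young optimization with conjugate exponents $\frac{1+\kappa}{\kappa}$ and $1+\kappa$ all check out, and the constants $\frac{\kappa}{1+\kappa}$ and $\frac{1}{1+\kappa}$ land exactly as in \eqref{eq: MD conv}. Note, however, that this paper does not prove the theorem at all --- it is imported verbatim as \cite[Theorem 6]{vural2022mirror} --- so there is no in-paper proof to compare against; your argument is the standard mirror-descent potential analysis, which is essentially the route taken in that cited reference, and your closing observation that the choice $x_0 = \arg\min_{x\in\mathcal{X}}\Psi_p(x)$ is immaterial to this particular inequality is also accurate.
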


\subsection{ Zeroth order version of \texttt{RSMD} (\texttt{ZO-RSMD})}
\dm{Next, we present our first zeroth-order algorithm called \texttt{ZO-RSMD} (zeroth-order version of robust SMD algorithm). }
The main idea of the proposed \texttt{ZO-RSMD} (zeroth-order version of robust SMD algorithm) is to combine the above \texttt{RSMD} algorithm  \eqref{MD} with the two-point gradient approximation \eqref{g}. 

\begin{algorithm}[h!]
\caption{ \dm{\texttt{ZO-RSMD} }}\label{alg:Robust}
\begin{algorithmic}[1]
\Procedure{\texttt{ZO-RSMD} }{number of iterations $T$, stepsize $\nu$, \dd{prox-}function $\Psi_{p}$, smoothing constant $\tau$}
    \State $x_0 \gets \arg\min\limits_{x \in \mathcal{X}} \Psi_{p}(x)$
    
    \For{$k = 0, 1, \dots ,T-1$}  
    
        \State Sample $\mathbf{e}_k \sim {\rm Uniform}(\{\mathbf{e}: \|\mathbf{e}\|_2 = 1 \}) $ independently 
        
        \State Sample $\xi_k$ independently
        
        \State Calculate $g_{k+1} =  \frac{d}{2\tau}(\phi(x_k + \tau \mathbf{e}_k, \xi_k) - \phi(x_k - \tau \mathbf{e}_k, \xi_k)) \mathbf{e}_k$
        
        \State Calculate $y_{k+1}  \gets \nabla(\Psi_{p}^*) (\nabla \Psi_{p}(x_k) - \nu g_{k+1})$
        
        \State Calculate $x_{k+1}  \gets \arg \min\limits_{x \in \mathcal{X}} D_{\Psi_{p}}(x, y_{k+1})$

    \EndFor

    \State \textbf{return} $\overline{x}_T \gets \frac{1}{T}\sum\limits_{k=0}^{T-1}x_k$
 \EndProcedure
\end{algorithmic}
\end{algorithm}

The next theorem provides  a convergence guarantee for \texttt{ZO-RSMD} (see Algorithm \ref{alg:Robust}). 

\begin{theorem}\label{Robust Conv} Let function $f$ satisfy Assumptions \ref{as: convex}, \ref{as: Lipshcitz and bounded}, \ref{as: noize},  $q \in [1 + \kappa, \infty]$. Let 
$\Psi_{p}(x)$ be a prox-function which is  $\left( 1, \frac{1+\kappa}{\kappa}\right)$-uniformly convex w.r.t. the $\ell_p$-norm   (e.g., $\Psi_{p}(x) = K_{q}^{1/\kappa} \phi_p(x)$, where $K_{q}$, $\phi_p$ are defined in \eqref{eq: K_q def} and \eqref{eq: phi_p def} respectively). Let   stepsize $\nu = \frac{R_{0}^{1/\kappa}}{\sigma_{q} }T^{-\frac{1}{1+\kappa}}$ with $\sigma_{q}$ given in Lemma \ref{lem: grad norm 1 +k intro}, distance between starting point $x_0$ and solution $x^*$ $R_{0}^{\frac{1+\kappa}{\kappa}} \eqdef  \frac{1+\kappa}{\kappa}  D_{\Psi_{p}}(x^*, x_0)$ and diameter $\mathcal{D}_{\Psi}^\frac{1+\kappa}{\kappa} \eqdef \frac{1+\kappa}{\kappa} \sup\limits_{x,y \in \mathcal{X}}  D_{\Psi_{p}}(x,y) .$
Then for the output $\overline{x}_T$ of the  Algorithm \ref{alg:Robust}  the following holds 
\begin{enumerate} 
    \item 
\begin{equation}\label{eq:conv convergence}
\EE[f(\overline{x}_T)] - f(x^*) \leq 2M_2\tau + \frac{\sqrt{d}\Delta}{\tau}\mathcal{D}_{\Psi}   + \frac{R_0 \sigma_{q} }{T^\frac{\kappa}{1+\kappa}},\end{equation}

where  $\sigma_{q}^{1+\kappa} = 2^{\kappa}\left(\frac{\sqrt{d}}{2^{1/4}} a_{q}M_2\right)^{1+\kappa} + 2^{\kappa}\left(\frac{da_{q}\Delta}{\tau}\right)^{1+\kappa}$.
\item Moreover, with optimal $\tau =  \sqrt{\frac{\sqrt{d}\Delta\mathcal{D}_{\Psi} + 4R_{0}da_{q}\Delta T^{-\frac{\kappa}{1+\kappa}}}{2M_2}}$, we have
\begin{equation}\label{eq: robust disc deter}
\EE[f(\overline{x}_T)] - f(x^*) \leq  \sqrt{8M_2\sqrt{d}\Delta\mathcal{D}_\Psi} + \sqrt{\frac{32M_2R_{0}da_{q}\Delta }{T^{\frac{\kappa}{(1+\kappa)}}}}   + \frac{2\sqrt{d} a_{q}M_2 R_0}{T^{\frac{\kappa}{1+\kappa}}}.\end{equation}

\end{enumerate}
\end{theorem}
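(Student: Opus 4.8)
The plan is to reduce the analysis to the abstract convergence guarantee of \texttt{RSMD} in Theorem~\ref{MD convergence}, which holds pathwise for \emph{any} sequence of vectors $g_{k+1}$, and then take expectations, exploiting the structure of the zeroth-order estimate $g_{k+1}$ through Lemmas~\ref{lem: hat_f properties intro} and~\ref{lem: grad norm 1 +k intro}. Concretely, I would apply \eqref{eq: MD conv} to the realized estimates produced by Algorithm~\ref{alg:Robust} and take the full expectation of both sides. On the right-hand side, the term $\frac1T\sum_k \EE[\|g_{k+1}\|_q^{1+\kappa}]$ is controlled directly by $\sigma_q^{1+\kappa}$ via Lemma~\ref{lem: grad norm 1 +k intro}. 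The substance is therefore in lower-bounding the left-hand side $\frac1T\sum_k \EE[\langle g_{k+1}, x_k - x^*\rangle]$ by (essentially) the optimality gap $\EE[f(\overline{x}_T)] - f(x^*)$.

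For that lower bound I would condition on $x_k$ and use the tower property. Writing $g_{k+1}=g(x_k,\xi_k,\mathbf{e}_k)$ and splitting off the deterministic noise as in \eqref{g}, the conditional expectation factorizes into a ``clean'' part and a ``bias'' part,
\[
\EE_{\xi_k,\mathbf{e}_k}[g_{k+1}\mid x_k] = \nabla \hat{f}_\tau(x_k) + b_k, \qquad b_k \eqdef \frac{d}{2\tau}\EE_{\mathbf{e}}\big[(\delta(x_k+\tau\mathbf{e})-\delta(x_k-\tau\mathbf{e}))\,\mathbf{e}\big],
\]
where the identification of the clean part with $\nabla \hat{f}_\tau(x_k)$ follows by taking $\EE_{\xi}$ first and then using the sphere-symmetrization $\mathbf{e}\mapsto-\mathbf{e}$ behind the gradient formula of Lemma~\ref{lem: hat_f properties intro}. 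For the clean part, convexity of $\hat{f}_\tau$ gives $\langle \nabla \hat{f}_\tau(x_k), x_k - x^*\rangle \geq \hat{f}_\tau(x_k) - \hat{f}_\tau(x^*)$, and the uniform approximation bound $\sup_x|\hat{f}_\tau(x)-f(x)|\leq \tau M_2$ turns this into $f(x_k)-f(x^*)-2\tau M_2$, producing the additive $2M_2\tau$ term.

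The key step — and the main obstacle — is the bias term $\langle b_k, x_k - x^*\rangle$. A naive H\"older bound $\|b_k\|_q \leq \frac{d\Delta}{\tau}\EE_{\mathbf{e}}\|\mathbf{e}\|_q$ would cost a factor $d$; to recover the claimed $\sqrt d$ I would instead bound the directional action directly, using that for a fixed unit vector $v$ and $\mathbf{e}\sim U(S_2^d)$ one has $\EE_{\mathbf{e}}|\langle \mathbf{e}, v\rangle| \leq \big(\EE_{\mathbf{e}}[\langle \mathbf{e},v\rangle^2]\big)^{1/2} = 1/\sqrt d$. Combined with $|\delta(\cdot)|\leq\Delta$ (Assumption~\ref{as: noize}), this yields $\|b_k\|_2 \leq \sqrt d\,\Delta/\tau$, hence $|\langle b_k, x_k - x^*\rangle| \leq \frac{\sqrt d\,\Delta}{\tau}\|x_k - x^*\|_2 \leq \frac{\sqrt d\,\Delta}{\tau}\mathcal{D}_{\Psi}$ (the last step via uniform convexity and $\|\cdot\|_2\leq\|\cdot\|_p$ for $p\leq2$), giving the second additive term. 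Assembling these pieces and using Jensen's inequality $f(\overline{x}_T)\leq\frac1T\sum_k f(x_k)$, then substituting the stated stepsize $\nu = R_0^{1/\kappa}\sigma_q^{-1}T^{-1/(1+\kappa)}$ into \eqref{eq: MD conv} (the two $\nu$-terms collapse to $R_0\sigma_q T^{-\kappa/(1+\kappa)}$), yields \eqref{eq:conv convergence}.

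For Part~2, I would substitute the explicit $\sigma_q$ from Lemma~\ref{lem: grad norm 1 +k intro}, split it by subadditivity of $t\mapsto t^{1/(1+\kappa)}$ into an $M_2$-part and a $\Delta/\tau$-part, and collect all $\tau$-dependent terms into the form $2M_2\tau + B/\tau$ with $B = \sqrt d\,\Delta\mathcal{D}_{\Psi} + 4R_0 d a_q\Delta\,T^{-\kappa/(1+\kappa)}$. Minimizing over $\tau$ gives the stated $\tau = \sqrt{B/(2M_2)}$ and optimal value $2\sqrt{2M_2 B}$; a final application of $\sqrt{x+y}\leq\sqrt x+\sqrt y$ separates the two square-root terms of \eqref{eq: robust disc deter}, while the $\tau$-free part of $\sigma_q$ supplies the last term $2\sqrt d\,a_q M_2 R_0\,T^{-\kappa/(1+\kappa)}$. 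I expect the only genuinely delicate point to be the $\sqrt d$ bias estimate above; the remainder is bookkeeping with norm conversions and the choices of $\nu$ and $\tau$.
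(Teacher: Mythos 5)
Your proposal is correct and follows essentially the same route as the paper: apply the pathwise bound \eqref{eq: MD conv} and take expectations, control the moment term via Lemma \ref{lem: grad norm 1 +k intro}, lower-bound the inner-product term by splitting the conditional expectation of $g_{k+1}$ into $\nabla \hat f_\tau(x_k)$ plus an adversarial bias handled through the $\EE_{\mathbf{e}}|\langle \mathbf{e},r\rangle|\leq \|r\|_2/\sqrt d$ concentration bound (your duality argument for $\|b_k\|_2\leq \sqrt d\,\Delta/\tau$ is just a repackaging of the paper's Lemmas \ref{inner product grad r} and \ref{inner product estimate}), and then optimize $\nu$ and $\tau$ exactly as in the paper. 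In fact your bookkeeping of the two $\nu$-terms collapsing to $R_0\sigma_q T^{-\kappa/(1+\kappa)}$ is slightly sharper than the constant tracked in the paper's own appendix proof.
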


\begin{proof}[Sketch of the Proof of Theorem \ref{Robust Conv} ]

the proof is based on Theorem \ref{MD convergence} and inequality \eqref{eq: MD conv} which give
\begin{equation}\label{eq: main inq in robust}
    \underbrace{\EE \left[\frac1T \sum \limits_{k=0}^{T-1} \langle g_{k+1}, x_k  -x^* \rangle \right]}_\text{\circled{1}} \leq \underbrace{\EE \left[ \frac{\kappa}{\kappa +1} \frac{R_{0}^{\frac{1+\kappa}{\kappa}} }{\nu T}\right]}_\text{\circled{2}} + \underbrace{\EE \left[ \frac{\nu^{\kappa}}{1+\kappa} \frac{1}{T} \sum \limits_{k=0}^{T-1} \|g_{k+1}\|^{1+\kappa}_q\right]}_\text{\circled{3}}.\end{equation}
\circled{1} term in \eqref{eq: main inq in robust} due to convexity and approximation properties of $\hat{f}_\tau(x)$ in Lemma \ref{lem: hat_f properties intro} and measure concentration Lemma \ref{inner product grad r} can be bounded as
$$\circled{1} \geq \EE[f(\overline{x}_T)] - f(x^*) - 2M_2\tau -  \frac{\sqrt{d}\Delta}{\tau}\mathcal{D}_{\Psi}.$$
\circled{3} term in \eqref{eq: main inq in robust} can be bounded by Lemma \ref{lem: grad norm 1 +k intro} as
$$\circled{3} \leq \frac{\nu^{\kappa}}{1+\kappa} \sigma_q^{1+\kappa}.$$
Combining these bounds together, we get
$$\EE[f(\overline{x}_T)] - f(x^*) \leq 2M_2\tau + \frac{\sqrt{d}\Delta}{\tau}\mathcal{D}_\Psi+ \frac{R_{0}^{\frac{1+\kappa}{\kappa}} }{\nu T} + \frac{\nu^{\kappa}}{1+\kappa}\sigma_{q}^{1+\kappa}.$$
Next we choose optimal stepsize $\nu = \frac{R_{0}^{1/\kappa}}{\sigma_{q} }T^{-\frac{1}{1+\kappa}}, \tau$  and finish the proof. 
\end{proof}
For the complete proof we refer to  Section \ref{sec: proof robust conv}.

\subsection{Discussion}

\paragraph{Maximum admissible level of adversarial noise}

Let $\varepsilon>0 $ be a desired accuracy in terms of the function value, i.e., our goal is to guarantee $\EE[f(\overline{x}_T)] - f(x^*) \leq \varepsilon$.
According to Theorem \ref{Robust Conv} in the case of absence of the adversarial noise, i.e., when $\Delta = 0$, the iteration complexity to reach accuracy $\varepsilon$ is $T = \dm{O}  \left(\left(\frac{R_{0}\sqrt{d}a_qM_2}{\varepsilon}\right)^\frac{1+\kappa}{\kappa}\right)$ if $\tau $ is chosen sufficiently small. This complexity is optimal according to \cite{nemirovskij1983problem} in terms of $\varepsilon$ dependency. 
In order to obtain the same complexity in the case when $\Delta > 0$, we need to choose an appropriate value of $\tau$ and ensure that $\Delta$ is sufficiently small. Thus, the terms $2M_2\tau$ and $\frac{\sqrt{d}\Delta}{\tau}\mathcal{D}_{\Psi}$ in \eqref{eq:conv convergence} should be of the order $ \varepsilon$. These conditions also make negligible the $\tau$-depending term in $\sigma_{q}$. One can choose $\tau = \frac{\varepsilon}{M_2}$ rather than optimal $\tau$ proposed in Theorem \ref{Robust Conv} in order to get easier calculations.   Consequently, when $\tau = \frac{\varepsilon}{M_2} $ and $ \Delta \leq  \frac{\varepsilon^2}{M_2\sqrt{d}\mathcal{D}_{\Psi}}$, we have
$$    T = \dm{O}\left(\left(\frac{R_{0}\sqrt{d}a_qM_2}{\varepsilon}\right)^\frac{1+\kappa}{\kappa}\right).$$
According to \cite{pasechnyuk2023upper, risteski2016algorithms} bound $ \Delta \leq  \frac{\varepsilon^2}{M_2\sqrt{d}\mathcal{D}_{\Psi}}$ exactly matches the upper bound of admissible adversarial noise for non-smooth zeroth-order optimization.

\paragraph{Dependency of the bounds on $q$ and $d$}

In Algorithm \ref{alg:Robust}, we can freely choose $p\in [1,2]$ and $\Psi_p$, which  lead to different values of $\mathcal{D}_\Psi, R_{0}, a_q$ depending on the compact convex set $\mathcal{X}$. It is desirable to reduce $a_q, \mathcal{D}_\Psi$ simultaneously, that would allow us to increase maximal noise level $\Delta$ and converge faster without changing the rate according to \eqref{eq:conv convergence}. Yet, unlike the well-studied SMD Algorithm \cite{nemirovskij1983problem}  with strongly convex prox-functions $\Psi_p$, there are only a few examples of effective choices of uniformly-convex prox-functions $\Psi_p$.

\section{ \dm{ Second Algorithm: \texttt{ZO-Clip-SMD}}}\label{sec:ZOCLipping}
\dm{In this section, we present our second algorithm which 
 is based on the mirror descent and gradient clipping technique. }

An alternative approach for dealing with heavy-tailed noise distributions in stochastic optimization is based on the gradient clipping technique, see e.g., \cite{zhang2020adaptive}.  Given a constant $c>0$, the clipping operator applied to a vector $g$ is given by
$$\hat{g} = \begin{cases}\frac{g}{\|g\|} \min(\|g\|, c), & g \neq 0, \\ 0, &g = 0. \end{cases}$$ 
Clipped gradient has several useful properties for further proofs. 
\begin{lemma}\label{clip_grad_properties intro}
For $c >0$ and stochastic vector $g = g(x,\xi,\mathbf{e})$ we define $\hat{g} = \frac{g}{\|g\|_q} \min(\|g\|_q, c)$. Then we have
\begin{enumerate}
    \item 
    \begin{equation}\label{clip grad  EE diff}\|\hat{g} - \EE[\hat{g} ]\|_q \leq 2c.\end{equation}
    
    \item 
     Also if \hspace{1pt}  $\EE[\|g(x,\xi,\mathbf{e})\|_q^{1+\kappa}]\leq  \sigma_{q}^{1+\kappa}$, then we have\begin{enumerate}
    \item \begin{equation}\label{clip grad  second moment}\EE[\|\hat{g}\|^2_q] \leq  \sigma_{q}^{1+\kappa} c^{1 - \kappa},\end{equation}
    \item \begin{equation}\label{clip grad  var}\EE[\|\hat{g} - \EE[\hat{g}]\|^2_q] \leq  4\sigma_{q}^{1+\kappa} c^{1 - \kappa},\end{equation}
    \item \begin{equation}\label{clip grad nabla f tau diff}\|\EE[g] - \EE [\hat{g}]\|_q  \leq \frac{\sigma_{q}^{1+\kappa}}{c^\kappa}.\end{equation}
    
\end{enumerate}
\end{enumerate}
\end{lemma}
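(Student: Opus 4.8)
The plan is to prove the four clipping inequalities in Lemma~\ref{clip_grad_properties intro} in the order listed, since each one builds naturally on the elementary properties of the clipping operator and the bounded $(1+\kappa)$-th moment hypothesis. The recurring tool throughout is the observation that clipping never increases norm, i.e. $\|\hat{g}\|_q = \min(\|g\|_q, c) \le c$ and simultaneously $\|\hat{g}\|_q \le \|g\|_q$, together with $\|\hat{g}\|_q \le c$ pointwise (almost surely). I would keep both bounds in mind because the proofs interpolate between the ``$\le c$'' bound (used to convert a $(1+\kappa)$ moment into a second moment) and the ``$\le \|g\|_q$'' bound (used to control the clipping bias).

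First, for \eqref{clip grad EE diff} I would use the triangle inequality $\|\hat{g} - \EE[\hat{g}]\|_q \le \|\hat{g}\|_q + \|\EE[\hat{g}]\|_q$. The first term is at most $c$ by definition of clipping. For the second, I would apply Jensen's inequality to the convex function $\|\cdot\|_q$ to get $\|\EE[\hat{g}]\|_q \le \EE[\|\hat{g}\|_q] \le c$, summing to the claimed $2c$. Next, for \eqref{clip grad second moment} the key is the interpolation $\|\hat{g}\|_q^2 = \|\hat{g}\|_q^{1+\kappa} \|\hat{g}\|_q^{1-\kappa} \le \|g\|_q^{1+\kappa} \, c^{1-\kappa}$, where I use $\|\hat{g}\|_q \le \|g\|_q$ on the first factor and $\|\hat{g}\|_q \le c$ on the second (valid since $1-\kappa \ge 0$). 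Taking expectation and invoking the moment bound $\EE[\|g\|_q^{1+\kappa}] \le \sigma_q^{1+\kappa}$ gives the result immediately.

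For \eqref{clip grad var}, I would use the standard bias–variance decomposition $\EE[\|\hat{g} - \EE[\hat{g}]\|_q^2]$; the cleanest route is the bound $\EE[\|X - \EE X\|^2] \le \EE[\|X\|^2] + \|\EE X\|^2 + 2\|\EE X\| \EE\|X\| \le 4\EE[\|X\|^2]$ after applying Jensen to $\|\EE X\|_q \le \EE\|X\|_q$ and $(\EE\|X\|_q)^2 \le \EE\|X\|_q^2$, so the variance is at most $4\EE[\|\hat{g}\|_q^2]$, and then I substitute \eqref{clip grad second moment}. Finally, \eqref{clip grad nabla f tau diff} controls the clipping bias: I would write $\EE[g] - \EE[\hat{g}] = \EE[g - \hat{g}]$ and note that $g - \hat{g}$ is nonzero only on the event $\{\|g\|_q > c\}$, where $\|g - \hat{g}\|_q = \|g\|_q - c \le \|g\|_q$. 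Thus $\|\EE[g-\hat{g}]\|_q \le \EE[\|g\|_q \mathbf{1}\{\|g\|_q > c\}]$, and on this event $\|g\|_q = \|g\|_q^{\kappa} \|g\|_q^{1-\kappa} \ge c^{\kappa}\|g\|_q^{1-\kappa}$, wait—I would instead use $\mathbf{1}\{\|g\|_q > c\} \le \|g\|_q^{\kappa}/c^{\kappa}$ to get $\EE[\|g\|_q \mathbf{1}\{\|g\|_q>c\}] \le \EE[\|g\|_q^{1+\kappa}]/c^{\kappa} \le \sigma_q^{1+\kappa}/c^{\kappa}$.

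The main obstacle, such as it is, lies in \eqref{clip grad nabla f tau diff}: choosing the right Markov-type truncation $\mathbf{1}\{\|g\|_q > c\} \le (\|g\|_q/c)^{\kappa}$ so that the leftover factor combines with $\|g\|_q$ into exactly the available $(1+\kappa)$-th moment. The other three inequalities are routine once the two elementary clipping bounds and Jensen's inequality are in hand; the only care required is tracking that $1-\kappa \ge 0$ so that raising the pointwise bound $\|\hat{g}\|_q \le c$ to the power $1-\kappa$ preserves the inequality direction.
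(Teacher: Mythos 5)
Your proposal is correct and follows essentially the same route as the paper's own proof: the triangle-plus-Jensen argument for \eqref{clip grad  EE diff}, the interpolation $\|\hat{g}\|_q^2 = \|\hat{g}\|_q^{1+\kappa}\|\hat{g}\|_q^{1-\kappa} \le \|g\|_q^{1+\kappa} c^{1-\kappa}$ for \eqref{clip grad  second moment}, a routine expansion giving the factor $4$ for \eqref{clip grad  var}, and the truncation bound $\EE[\|g\|_q \mathbf{1}\{\|g\|_q > c\}] \le \EE[\|g\|_q^{1+\kappa}]/c^{\kappa}$ for \eqref{clip grad nabla f tau diff}, which is exactly the paper's inequality $\|g\|_q^{1+\kappa}\mathbf{1}\{\|g\|_q > c\} \ge \|g\|_q c^{\kappa}\mathbf{1}\{\|g\|_q > c\}$ written in Markov form. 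No gaps; the argument is complete as sketched.
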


The clipping constant $c$ allows playing with the trade-off between the faster convergence due to bounded second moment of $\hat{g}$ and bias $\|\EE[\hat{g} - g]\|$ when $c \rightarrow 0$. 

\begin{algorithm}
\caption{ \dm{ \texttt{ZO-Clip-SMD} }  }\label{alg:clip}
\begin{algorithmic}[1]
\Procedure{\dm{ \texttt{ZO-Clip-SMD} } }{Number of iterations $T$, stepsize $\nu$, clipping constant $c$, \dd{prox-}function $\Psi_{p}$, smoothing constant $\tau$}
    \State $x_0 \gets \arg\min\limits_{x \in \mathcal{X}} \Psi_{p}(x)$
    
    \For{$k = 0, 1, \dots ,T-1$}  
    
        \State Sample $\mathbf{e}_k \sim {\rm Uniform}(\{\mathbf{e}: \|\mathbf{e}\|_2 = 1 \}) $ independently 
        
        \State Sample $\xi_k$ independently
        
        \State Calculate $g_{k+1} = \frac{d}{2\tau}(\phi(x_k + \tau \mathbf{e}_k, \xi_k) - \phi(x_k - \tau \mathbf{e}_k, \xi_k)) \mathbf{e}_k$
         
        \State Calculate $\hat{g}_{k+1} = \frac{g_{k+1}}{\|g_{k+1}\|_q} \min(\|g_{k+1}\|_q, c)$ 
        
        \State Calculate $y_{k+1}  \gets \nabla(\Psi_{p}^*) (\nabla \Psi_{p}(x_k) - \nu \hat{g}_{k+1})$
        
        \State Calculate $x_{k+1}  \gets \arg \min\limits_{x \in \mathcal{X}} D_{\Psi_{p}}(x, y_{k+1})$

    \EndFor

    \State \textbf{return} $\overline{x}_T \gets \frac{1}{T}\sum\limits_{k=0}^{T-1}x_k$
\EndProcedure
\end{algorithmic}
\end{algorithm}
The next theorem presents  convergence rates for \dm{ \texttt{ZO-Clip-SMD}} (Algorithm \ref{alg:clip}) in terms of the expectation of the suboptimality gap.
\begin{theorem}\label{Clip Conv} Let function $f$ satisfy  Assumptions \ref{as: convex}, \ref{as: Lipshcitz and bounded}, \ref{as: noize},  $q \in [2, \infty]$, arbitrary number of iterations $T$, smoothing constant $\tau > 0$  be given. Let 
$\Psi_{p}(x)$ be a prox-function  which is
$1$-strongly convex w.r.t. the $p$-norm . Let the stepsize $\nu = \left( \frac{R_{0}^2}{4T\sigma_{q}^{1+\kappa} \mathcal{D}_\Psi^{1-\kappa} }\right)^{\frac{1}{1+\kappa}}$ with $\sigma_{q}$ given in Lemma \ref{lem: grad norm 1 +k intro}, distance between starting point $x_0$ and solution $x^*$ $R_{0}^{\frac{1+\kappa}{\kappa}} \eqdef  \frac{1+\kappa}{\kappa}  D_{\Psi_{p}}(x^*, x_0)$, diameter $\mathcal{D}_{\Psi}^\frac{1+\kappa}{\kappa} \eqdef \frac{1+\kappa}{\kappa} \sup\limits_{x,y \in \mathcal{X}}  D_{\Psi_{p}}(x,y),$ and    the clipping constant $c =  \frac{2\kappa\mathcal{D}_\Psi}{(1-\kappa)\nu}$.
Then for the output  $\overline{x}_T$ of Algorithm \ref{alg:clip} the following holds  
\begin{enumerate}
    \item  Then, we have
\begin{equation}\label{eq: clip theorem eq}  
\EE[f(\overline{x}_T)] - f(x^*) = 2M_2\tau + \frac{\sqrt{d}\Delta}{\tau}\mathcal{D}_{\Psi}   + \frac{R_{0}^\frac{2\kappa}{1+\kappa} \mathcal{D}_\Psi^{\frac{1-\kappa}{1+\kappa}}\sigma_{q} }{T^\frac{\kappa}{1+\kappa}},\end{equation}
where  $\sigma_{q}^{1+\kappa} = 2^{\kappa}\left(\frac{\sqrt{d}}{2^{1/4}} a_{q}M_2\right)^{1+\kappa} + 2^{\kappa}\left(\frac{da_{q}\Delta}{\tau}\right)^{1+\kappa}$.

\item Moreover, with the optimal $\tau =  \sqrt{\frac{\sqrt{d}\Delta\mathcal{D}_{\Psi} + 4R_{0}^\frac{2\kappa}{1+\kappa} \mathcal{D}_\Psi^{\frac{1-\kappa}{1+\kappa}}da_{q}\Delta T^{-\frac{\kappa}{1+\kappa}}}{2M_2}}$, we have
\begin{eqnarray}
\EE[f(\overline{x}_T)] - f(x^*) &\leq&  \sqrt{8M_2\sqrt{d}\Delta\mathcal{D}_\Psi} + \sqrt{\frac{32M_2R_{0}^\frac{2\kappa}{1+\kappa} \mathcal{D}_\Psi^{\frac{1-\kappa}{1+\kappa}}da_{q}\Delta }{T^{\frac{\kappa}{(1+\kappa)}}}} \notag \\  
&+& \frac{2\sqrt{d} a_{q}M_2 R_{0}^\frac{2\kappa}{1+\kappa} \mathcal{D}_\Psi^{\frac{1-\kappa}{1+\kappa}}} {T^{\frac{\kappa}{1+\kappa}}}\label{eq: clip disc deter}.\end{eqnarray}
\end{enumerate}
\end{theorem}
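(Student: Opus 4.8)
The plan is to follow the same skeleton as the sketch of Theorem~\ref{Robust Conv}, but to replace the uniformly-convex estimate by the classical strongly-convex mirror-descent inequality and to track separately the \emph{bias} that clipping introduces. Since $\Psi_p$ is $1$-strongly convex w.r.t.\ $\|\cdot\|_p$, the update in Algorithm~\ref{alg:clip} obeys the one-step inequality
$$\nu\langle \hat{g}_{k+1}, x_k - x^*\rangle \leq D_{\Psi_{p}}(x^*, x_k) - D_{\Psi_{p}}(x^*, x_{k+1}) + \frac{\nu^2}{2}\|\hat{g}_{k+1}\|_q^2.$$
Summing over $k=0,\dots,T-1$ telescopes the Bregman terms and leaves $D_{\Psi_{p}}(x^*,x_0)$ on the right. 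I would then take full expectation and, via the tower property and measurability of $x_k$ w.r.t.\ the conditioning $\EE_{|\leq k}[\cdot]$, rewrite $\EE[\langle \hat{g}_{k+1}, x_k-x^*\rangle]=\EE[\langle \EE_{|\leq k}[\hat{g}_{k+1}], x_k-x^*\rangle]$, so that only the biased conditional mean of the clipped estimate survives.

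The heart of the argument is the decomposition $\EE_{|\leq k}[\hat{g}_{k+1}]-\nabla\hat{f}_\tau(x_k) = \big(\EE[\hat g]-\EE[g]\big)+\big(\EE[g]-\nabla\hat{f}_\tau(x_k)\big)$, which separates the clipping bias from the adversarial-noise bias. The ``signal'' part $\langle\nabla\hat{f}_\tau(x_k), x_k-x^*\rangle \geq \hat{f}_\tau(x_k)-\hat{f}_\tau(x^*)$ is converted into $f(x_k)-f(x^*)-2M_2\tau$ using the uniform approximation bound $\sup_x|\hat{f}_\tau(x)-f(x)|\leq \tau M_2$ of Lemma~\ref{lem: hat_f properties intro}; averaging and Jensen's inequality on $\overline{x}_T$ produce the left-hand side $\EE[f(\overline{x}_T)]-f(x^*)$. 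The adversarial part $\EE[g]-\nabla\hat{f}_\tau(x_k)=\EE_{\mathbf{e}}[\tfrac{d}{2\tau}(\delta(x_k+\tau\mathbf{e})-\delta(x_k-\tau\mathbf{e}))\mathbf{e}]$ must be bounded \emph{inside} the inner product via the measure-concentration Lemma~\ref{inner product grad r} (a naive $\|\cdot\|_q$ bound would lose a factor $\sqrt d$), giving $\tfrac{\sqrt{d}\Delta}{\tau}\mathcal{D}_\Psi$. The clipping part is bounded by H\"older together with \eqref{clip grad nabla f tau diff} and the diameter, yielding $\tfrac{\sigma_q^{1+\kappa}}{c^\kappa}\mathcal{D}_\Psi$.

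Collecting everything and invoking the clipped second-moment bound \eqref{clip grad second moment} gives
$$\EE[f(\overline{x}_T)]-f(x^*) \leq 2M_2\tau + \frac{\sqrt{d}\Delta}{\tau}\mathcal{D}_\Psi + \frac{D_{\Psi_{p}}(x^*,x_0)}{\nu T} + \frac{\nu}{2}\sigma_q^{1+\kappa}c^{1-\kappa} + \frac{\sigma_q^{1+\kappa}}{c^\kappa}\mathcal{D}_\Psi.$$
Minimizing the last two terms over $c$ amounts to minimizing $\tfrac{\nu}{2}c^{1-\kappa}+\tfrac{\mathcal{D}_\Psi}{c^\kappa}$, whose first-order condition gives exactly $c=\tfrac{2\kappa\mathcal{D}_\Psi}{(1-\kappa)\nu}$; this collapses both into a single term $\propto \nu^\kappa\sigma_q^{1+\kappa}\mathcal{D}_\Psi^{1-\kappa}$. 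Balancing that against $D_{\Psi_{p}}(x^*,x_0)/(\nu T)$ over $\nu$ produces the prescribed stepsize and the rate $\tfrac{R_0^{2\kappa/(1+\kappa)}\mathcal{D}_\Psi^{(1-\kappa)/(1+\kappa)}\sigma_q}{T^{\kappa/(1+\kappa)}}$, i.e.\ \eqref{eq: clip theorem eq}. Part~2 then follows by viewing the right-hand side as a function of $\tau$ (remembering that $\sigma_q$ depends on $\tau$ through its $\Delta$-component) and minimizing the dominant $2M_2\tau + \sqrt{d}\Delta\mathcal{D}_\Psi/\tau$ portion, which yields the stated optimal $\tau$ and \eqref{eq: clip disc deter} after an AM--GM step.

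I expect the main obstacle to be the bias--variance trade-off forced by clipping: in contrast to the uniformly-convex route of Theorem~\ref{Robust Conv}, one must here simultaneously control the clipping bias $\sigma_q^{1+\kappa}/c^\kappa$ and the \emph{inflated} second moment $\sigma_q^{1+\kappa}c^{1-\kappa}$, and the whole rate hinges on choosing $c$ in exactly the right relation to $\nu$ and $\mathcal{D}_\Psi$ so that these two error sources cancel at the optimal order. A secondary subtlety, flagged in the introduction, is keeping the joint conditioning over the smoothing direction $\mathbf{e}_k$ and the oracle randomness $\xi_k$ clean, so that the adversarial bias is genuinely isolated from the clipping bias and can be routed through the measure-concentration lemma rather than a lossy dual-norm estimate.
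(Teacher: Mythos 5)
Your proposal is correct and follows essentially the same route as the paper's proof: the same decomposition of $\EE_{|\leq k}[\hat g_{k+1}]-\nabla\hat f_\tau(x_k)$ into clipping bias (bounded via H\"older and \eqref{clip grad nabla f tau diff}) and adversarial bias (routed through Lemmas \ref{inner product grad r} and \ref{inner product estimate} to avoid the lossy $\sqrt d$ factor), the same use of \eqref{clip grad  second moment} for the inflated second moment, and the same successive optimization over $c$, $\nu$, and $\tau$ yielding $c=\frac{2\kappa\mathcal{D}_\Psi}{(1-\kappa)\nu}$ and the stated rate. The only cosmetic difference is that you re-derive the one-step strongly-convex mirror-descent inequality and telescope it yourself, whereas the paper invokes Theorem \ref{MD convergence} as a black box.
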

\begin{proof}[Sketch of the Proof of Theorem \ref{Clip Conv} ]
The proof is based on Theorem \ref{MD convergence} and inequality \eqref{eq: MD conv} for $1$-strongly convex $\Psi_p$, which give
\begin{equation}\label{eq: main inq in clip}
    \underbrace{\EE \left[\frac1T \sum \limits_{k=0}^{T-1} \langle \hat{g}_{k+1}, x_k  -x^* \rangle  \right]}_\text{\circled{1}} \leq  \EE \left[\frac{1}{2} \frac{R_{0}^{2} }{\nu T} \right] + \underbrace{\EE \left[\frac{\nu}{2} \frac{1}{T} \sum \limits_{k=0}^{T-1} \|\hat{g}_{k+1}\|^{2}_q \right]}_\text{\circled{2}}.\end{equation}
\circled{1} term in \eqref{eq: main inq in clip} due to convexity and approximation properties of $\hat{f}_\tau(x)$ in Lemma \ref{lem: hat_f properties intro}, measure concentration Lemma \ref{inner product grad r} and clipping properties in Lemma \ref{clip_grad_properties intro} can be bounded as
$$\circled{1} \geq \EE[f(\overline{x}_T)] - f(x^*) - 2M_2\tau -  \frac{\sqrt{d}\Delta}{\tau}\mathcal{D}_{\Psi} - \frac{\mathcal{D}_{\Psi} \sigma_q^{1+\kappa}}{c^\kappa}.$$
\circled{2} term in \eqref{eq: main inq in clip} can be bounded by Lemma \ref{clip_grad_properties intro} as
$$\circled{2} \leq \frac{\nu}{2} c^{1-\kappa} \sigma_q^{1+\kappa}.$$ 
Combining these bounds together, we get
$$\EE [f(\overline{x}_T)] - f(x^*)  \leq 2M_2\tau + \frac12 \frac{R_{0}^{2} }{\nu T} + \frac{\nu}{2}  \sigma_{q}^{1+\kappa} c^{1 - \kappa} + \left(\frac{\sigma_{q}^{1+\kappa}}{c^\kappa} + \Delta \frac{\sqrt{d}}{\tau}\right) \mathcal{D}_\Psi. $$
Next, we choose optimal clipping constant $c  =  \frac{2\kappa\mathcal{D}_\Psi}{(1-\kappa)\nu}$. Then, the optimal stepsize $\nu  = \left( \frac{R_{0}^2}{4T\sigma_{q}^{1+\kappa} \mathcal{D}_\Psi^{1-\kappa} }\right)^{\frac{1}{1+\kappa}}$ and smoothing parameter $\tau$ finish the proof. 
\end{proof}
For the complete proof we refer to \dm{Appendix} \ref{sec: clip conv}.

\dm{The next theorem present the convergence rates of   
for \dm{ \texttt{ZO-Clip-SMD}} (Algorithm \ref{alg:clip}) 
with high probability rather than in expectation.}
 We will use the $\widetilde{O}(\cdot)$-notation to hide polynomial factors of $\log\frac1\delta$. 
\begin{theorem}\label{Clip Conv prob} Let function $f$ satisfy Assumptions \ref{as: convex}, \ref{as: Lipshcitz and bounded}, \ref{as: noize},  $q \in [2, \infty]$, arbitrary number of iterations $T$, smoothing constant $\tau > 0$  be given. Let
$\Psi_{p}(x)$ be a
$1$-strongly convex w.r.t. the $p$-norm prox-function. Let the clipping constant $c = T^{\frac{1}{(1+\kappa)}}\sigma_{q}$ with $\sigma_{q}$ given in Lemma \ref{lem: grad norm 1 +k intro}, the stepsize $\nu = \frac{\mathcal{D}_{\Psi}}{c}$ with diameter $\mathcal{D}_{\Psi}^2 \eqdef 2 \sup\limits_{x,y \in \mathcal{X}}  D_{\Psi_{p}}(x,y) .$
Then for the output  $\overline{x}_T$ of the Algorithm \ref{alg:clip} the following holds
\begin{enumerate}
    \item 
Then, with probability at least $1 - \delta$, we have

\begin{equation}\label{eq: clip prob theorem eq}f(\overline{x}_T) - f(x^*) \leq 2M_2\tau  +  \frac{\Delta\sqrt{d}}{\tau}\mathcal{D}_{\Psi} + \widetilde{O}\left(\frac {\mathcal{D}_{\Psi}\sigma_{q}}{T^{\frac{\kappa }{1+\kappa}}} \right),  \end{equation}

where  $\sigma_{q}^{1+\kappa} = 2^{\kappa}\left(\frac{\sqrt{d}}{2^{1/4}} a_{q}M_2\right)^{1+\kappa} + 2^{\kappa}\left(\frac{da_{q}\Delta}{\tau}\right)^{1+\kappa}$.
\item Moreover, with the optimal $\tau =  \sqrt{\frac{\sqrt{d}\Delta\mathcal{D}_{\Psi} + 4\mathcal{D}_{\Psi}da_{q}\Delta T^{-\frac{\kappa}{1+\kappa}}}{2M_2}}$, we have
\begin{equation}\label{eq: clip prob disc deter}
f(\overline{x}_T) - f(x^*) =\widetilde{O}\left(  \sqrt{8M_2\sqrt{d}\Delta\mathcal{D}_\Psi} + \sqrt{\frac{32M_2\mathcal{D}_{\Psi}da_{q}\Delta }{T^{\frac{\kappa}{(1+\kappa)}}}}   + \frac{2\sqrt{d} a_{q}M_2 \mathcal{D}_{\Psi} }{T^{\frac{\kappa}{1+\kappa}}}\right).\end{equation}

\end{enumerate}
\end{theorem}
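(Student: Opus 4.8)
The plan is to mirror the expectation analysis behind Theorem~\ref{Clip Conv}, replacing each step where an expectation was taken by a high-probability estimate coming from a Bernstein-type martingale inequality. Since $\Psi_p$ is $1$-strongly convex (the case $r=2$), Theorem~\ref{MD convergence} applied to the clipped iterates of Algorithm~\ref{alg:clip} gives the deterministic bound
\begin{equation*}
\frac1T\sum_{k=0}^{T-1}\langle \hat{g}_{k+1}, x_k - x^*\rangle \leq \frac{R_{0}^2}{2\nu T} + \frac{\nu}{2T}\sum_{k=0}^{T-1}\|\hat{g}_{k+1}\|_q^2.
\end{equation*}
I would then decompose the summand along the smoothed gradient, $\langle\hat{g}_{k+1}, x_k - x^*\rangle = \langle\nabla\hat{f}_\tau(x_k), x_k - x^*\rangle + \langle b_k, x_k - x^*\rangle + \langle m_k, x_k - x^*\rangle$, where $b_k \eqdef \EE_{|\leq k}[\hat{g}_{k+1}] - \nabla\hat{f}_\tau(x_k)$ is the past-measurable bias and $m_k \eqdef \hat{g}_{k+1} - \EE_{|\leq k}[\hat{g}_{k+1}]$ is a martingale difference. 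Convexity of $\hat{f}_\tau$ (Lemma~\ref{lem: hat_f properties intro}) with Jensen turns the leading term into $\hat{f}_\tau(\overline{x}_T) - \hat{f}_\tau(x^*)$, which the approximation estimate $\sup_x|\hat{f}_\tau - f|\leq\tau M_2$ converts into $f(\overline{x}_T) - f(x^*) - 2M_2\tau$, producing the deterministic term $2M_2\tau$ in \eqref{eq: clip prob theorem eq}.

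Next I would bound the bias sequence deterministically. Splitting $b_k = (\EE_{|\leq k}[\hat{g}_{k+1}] - \EE_{|\leq k}[g_{k+1}]) + (\EE_{|\leq k}[g_{k+1}] - \nabla\hat{f}_\tau(x_k))$, the first piece has $\|\cdot\|_q$ at most $\sigma_q^{1+\kappa}/c^\kappa$ by \eqref{clip grad nabla f tau diff}, while the second is the adversarial-noise bias whose $q$-norm is at most $\tfrac{\sqrt{d}\Delta}{\tau}$ up to the constant from the measure-concentration Lemma~\ref{inner product grad r}. Pairing with $\|x_k - x^*\|_p \leq \mathcal{D}_\Psi$ (which follows from strong convexity) yields the deterministic contributions $\tfrac{\sqrt{d}\Delta}{\tau}\mathcal{D}_\Psi$ and $\tfrac{\mathcal{D}_\Psi\sigma_q^{1+\kappa}}{c^\kappa}$.

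The crux, and the main obstacle, is the high-probability control of $\sum_k\langle m_k, x_k - x^*\rangle$ and of $\sum_k\|\hat{g}_{k+1}\|_q^2$. Here clipping is indispensable: for heavy-tailed noise one cannot concentrate the unclipped gradients, but after clipping $\|\hat{g}_{k+1}\|_q\leq c$ makes the increments bounded, with $\|m_k\|_q\leq 2c$ by \eqref{clip grad EE diff}, so $|\langle m_k, x_k - x^*\rangle|\leq 2c\mathcal{D}_\Psi$, while \eqref{clip grad var} bounds the conditional variance by $\EE_{|\leq k}[\langle m_k, x_k-x^*\rangle^2]\leq 4\sigma_q^{1+\kappa}c^{1-\kappa}\mathcal{D}_\Psi^2$. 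A Freedman (Bernstein-for-martingales) inequality then gives, with probability at least $1-\delta$, a fluctuation of order $\sqrt{T\sigma_q^{1+\kappa}c^{1-\kappa}\mathcal{D}_\Psi^2\log\tfrac1\delta} + c\mathcal{D}_\Psi\log\tfrac1\delta$. The same inequality applied to $\sum_k(\|\hat{g}_{k+1}\|_q^2 - \EE_{|\leq k}\|\hat{g}_{k+1}\|_q^2)$ (increments $\leq c^2$, conditional second moment $\leq\sigma_q^{1+\kappa}c^{3-\kappa}$) reduces $\sum_k\|\hat{g}_{k+1}\|_q^2$ to $T\sigma_q^{1+\kappa}c^{1-\kappa}$ via \eqref{clip grad second moment}, plus a fluctuation of the same order up to logarithmic factors. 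The delicate point absorbed into these lemmas is that the randomness is jointly over $\xi_k$ and $\mathbf{e}_k$; the moment bound of Lemma~\ref{lem: grad norm 1 +k intro} and the clipping estimates of Lemma~\ref{clip_grad_properties intro} are already stated for this combined randomness, which is what makes the bounded-increment, bounded-conditional-variance structure required by Freedman available.

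Finally I would substitute $c = T^{1/(1+\kappa)}\sigma_q$ and $\nu = \mathcal{D}_\Psi/c$. A direct computation gives $\sqrt{T\sigma_q^{1+\kappa}c^{1-\kappa}}\,\mathcal{D}_\Psi = c\mathcal{D}_\Psi = T^{1/(1+\kappa)}\sigma_q\mathcal{D}_\Psi$ and $\sigma_q^{1+\kappa}/c^\kappa = \sigma_q T^{-\kappa/(1+\kappa)}$, so that after dividing by $T$ every statistical term (the mirror-descent term $\tfrac{R_0^2}{2\nu T}$, using $R_0\leq\mathcal{D}_\Psi$; the clipping bias; the second-moment term; and the two Freedman fluctuations) collapses to $\widetilde{O}\!\left(\mathcal{D}_\Psi\sigma_q/T^{\kappa/(1+\kappa)}\right)$. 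This establishes \eqref{eq: clip prob theorem eq}; optimizing $\tau$ exactly as in the expectation case of Theorem~\ref{Clip Conv} and substituting the expression for $\sigma_q$ then yields \eqref{eq: clip prob disc deter}.
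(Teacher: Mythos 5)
Your proposal is correct and follows essentially the same route as the paper's proof: the same three-way decomposition into the martingale part $\langle \hat{g}_{k+1}-\EE_{|\leq k}[\hat{g}_{k+1}], x_k-x^*\rangle$, the deterministic bias (clipping bias via \eqref{clip grad nabla f tau diff} plus adversarial bias via Lemma \ref{inner product grad r}), and the convexity term, with Bernstein for martingale differences (Lemma \ref{Bernstein inequality}) handling the fluctuations and the same choices $c = T^{1/(1+\kappa)}\sigma_q$, $\nu = \mathcal{D}_\Psi/c$. The only cosmetic difference is that you control $\sum_k \|\hat{g}_{k+1}\|_q^2$ by applying Freedman to the centered squares (increments bounded by $c^2$, conditional variance $\leq \sigma_q^{1+\kappa}c^{3-\kappa}$), whereas the paper invokes the dedicated sum-of-squares concentration result (Lemma \ref{Sum of squares MSD}); both give the same order $\widetilde{O}\left(T\sigma_q^{1+\kappa}c^{1-\kappa} + c^2\right)$, so the final bounds coincide.
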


\begin{proof}[Sketch of the Proof of Theorem \ref{Clip Conv prob} ]

To bound variables with probability at least $1-\delta$ we use the classical Bernstein inequality for the sum of martingale differences (i.e. $\EE[X_i| X_{j< i}] = 0,$ for all $  i \geq 1$)  (Lemma \ref{Bernstein inequality}) and the sum of squares of random variables (Lemma \ref{Sum of squares MSD}).

The proof is based on Theorem \ref{MD convergence} and inequality \eqref{eq: MD conv} for $1$-strongly convex $\Psi_p$ which give
\begin{equation}\label{eq: main inq in clip prob}
    \frac1T \sum \limits_{k=0}^{T-1} \langle \hat{g}_{k+1}, x_k  -x^* \rangle \leq  \frac{1}{2} \frac{R_{0}^{2} }{\nu T}  + \underbrace{\frac{\nu}{2} \frac{1}{T} \sum \limits_{k=0}^{T-1} \|\hat{g}_{k+1}\|^{2}_q }_\text{\circled{1}}.\end{equation}

    Adding $\pm \EE_{|\leq k}[\hat{g}_{k+1}] $ and $\pm \hat{f}_\tau(x_k)$ to the left part of \eqref{eq: main inq in clip prob}, we obtain
    \begin{align*}
    \frac1T \sum \limits_{k=0}^{T-1}\langle \hat{g}_{k+1}, x_k  -x^* \rangle 
    &=\underbrace{\frac1T \sum \limits_{k=0}^{T-1}\langle \hat{g}_{k+1} - \EE_{|\leq k}[\hat{g}_{k+1}]  , x_k  -x^* \rangle}_\text{\circled{2}} \\
    &+ \underbrace{\frac1T \sum \limits_{k=0}^{T-1}\langle \EE_{|\leq k}[\hat{g}_{k+1}] - \nabla \hat{f}_\tau(x_k), x_k  -x^* \rangle}_\text{\circled{3}},     \\
    &+ \underbrace{\frac1T \sum \limits_{k=0}^{T-1}\langle \nabla \hat{f}_\tau(x_k), x_k  -x^* \rangle}_\text{\circled{4}}.
    \end{align*}
We bound \circled{1} term in \eqref{eq: main inq in clip prob} using Lemma \ref{Sum of squares MSD} and \circled{2} as the sum of martingale differences using Lemma~\ref{Bernstein inequality}:
$$\circled{1}= \widetilde{O}\left(\sigma_{q,\kappa}^{1+\kappa} c^{1 - \kappa}  + \frac{1}{T}c^2\right)$$
$$\circled{2} = \widetilde{O}\left(\frac{4c \mathcal{D}_\Psi}{T} + \frac{\sqrt{4\sigma_{q}^{1+\kappa} c^{1 - \kappa}}}{\sqrt{T}} \mathcal{D}_\Psi^2  \right).$$
Next, we bound  \circled{4} using the convexity of $\hat{f}_\tau(x)$ in Lemma \ref{lem: hat_f properties intro} and \circled{3} using the measure concentration Lemma \ref{inner product grad r} and clipping properties in Lemma \ref{clip_grad_properties intro}: 
$$\circled{3} \leq\left( \frac{\sigma_{q}^{1+\kappa}}{c^\kappa} + \Delta\frac{\sqrt{d}}{\tau}\right)\mathcal{D}_\Psi,$$
$$\circled{4} \geq   f(\overline{x}_T) - f(x^*) -2M_2\tau.$$
Combining these bounds together, we get
\begin{align*}
    f(\overline{x}_T) - f(x^*) 
    &\leq 2M_2\tau + \left( \frac{\sigma_{q}^{1+\kappa}}{c^\kappa} + \Delta\frac{\sqrt{d}}{\tau}\right)\mathcal{D}_\Psi+\frac12  \frac{R_{0}^{2} }{\nu T} \\
    &+ \widetilde{O}\left(\frac{\nu}{2} \sigma_{q}^{1+\kappa} c^{1 - \kappa}  + \frac{\nu}{2}\frac{1}{T}c^2 +  \frac{4c\mathcal{D}_\Psi}{T} + \frac{\sqrt{4\sigma_{q}^{1+\kappa} c^{1 - \kappa}}}{\sqrt{T}} \mathcal{D}_\Psi^2\right).
\end{align*}
Next, we choose the stepsize $ \nu = \frac{\mathcal{D}_{\Psi}}{c}$ ,  clipping constant $c = T^{\frac{1}{(1+\kappa)}}\sigma_{q}$, smoothing parameter $\tau$, and finish the proof. 
\end{proof}
For the complete proof we refer to  Section \ref{sec: clip conv prob}.

\subsection{Discussion}

\paragraph{Maximum  admissible level of adversarial noise}

Let $\varepsilon >0$ be a desired accuracy in terms of the function value, i.e., with probability at least $1-\delta$ we have  $f(\overline{x}_T) - f(x^*) \leq \varepsilon$.
In Theorem \ref{Clip Conv prob} if there is no adversarial noise, i.e., $\Delta = 0$, then the \dd{number of iterations} $T$ to reach this accuracy is given by $T = \widetilde{O}\left(\left(\frac{\mathcal{D}_{\Psi}\sqrt{d}a_qM_2}{\varepsilon}\right)^\frac{1 + \kappa}{\kappa}\right)$  when $\tau \rightarrow 0$. This bound is optimal in terms of $\varepsilon$ dependency according to \cite{nemirovskij1983problem}. 
In order to keep the same complexity when $\Delta > 0$, the terms $2M_2\tau$ and $\frac{\sqrt{d}\Delta}{\tau}\mathcal{D}_{\Psi}$ should be of the order $ \varepsilon$. These conditions also make negligible the $\tau$-depending term in $\sigma_{q}$. One can choose $\tau = \frac{\varepsilon}{M_2}$ rather than optimal $\tau$ proposed in Theorem \ref{Clip Conv prob} in order to get easier calculations.  Consequently, if 
$ \quad \tau = \frac{\varepsilon}{M_2}$ and $\Delta \leq  \frac{\varepsilon^2}{M_2\sqrt{d}\mathcal{D}_{\Psi}} $
then 
$$   T = \widetilde{O}\left(\left(\frac{\mathcal{D}_\Psi\sqrt{d}a_qM_2}{\varepsilon}\right)^\frac{1 + \kappa}{\kappa}\right).$$
According to \cite{pasechnyuk2023upper, risteski2016algorithms} bound $ \Delta \leq  \frac{\varepsilon^2}{M_2\sqrt{d}\mathcal{D}_{\Psi}}$ exactly matches the upper bound of admissible adversarial noise for non-smooth zeroth-order optimization.


\paragraph{Recommendations for choosing $\Psi_p$}

In Algorithm \ref{alg:clip}, we can freely choose $p\in [1,2]$ and $\Psi_p$, which, depending on the compact convex set $\mathcal{X}$, will change $\mathcal{D}_\Psi, R_{0}, a_q$. The main task is to reduce $a_q, \mathcal{D}_\Psi$ simultaneously, which will allow us to increase maximal noise $\Delta$ and converge faster without changing the rate according to \eqref{eq: clip prob theorem eq}. 

Next, we discuss some standard sets $\mathcal{X}$ and prox-functions $\Psi_p$ taken from \cite{ben2001lectures}. The two main setups are given by
\begin{enumerate}
    \item Ball setup:
    \begin{equation}\label{eq: ball setup} \qquad  p = 2, \Psi_p(x) = \frac{1}{2}\|x\|_2^2,
    \end{equation}
    \item Entropy setup:
\begin{equation}\label{eq: entropy setup}  \qquad  p = 1, \Psi_p(x) = (1+\gamma)\sum_{i=1}^d (x_i + \gamma/d)\log(x_i + \gamma/d), \gamma > 0.
    \end{equation}
\end{enumerate}
We consider unit balls $B^d_{p'}$  and standard simplex $\Delta^d_+  = \{x \in \RR^d: x \geq 0, \sum_i x_i  = 1 \}$ as $\mathcal{X}$. By Lemma \ref{lem: grad norm 1 +k intro} constant $a_q$ equals $d^{\frac1q - \frac12}  \min \{ \sqrt{32\ln d - 8} , \sqrt{2q - 1}\}.$ 
The next tables collect the iteration complexity $T^{\frac{\kappa}{1+\kappa}}$ and maximum feasible noise level $\Delta$ up to $O\left(\log\frac1\delta\right)$ factor for each setup (row) and set (column).
\begin{table}[h!]
\centering
 \caption{$T^{\frac{\kappa}{1+\kappa}} $ up to $O\left(\log\frac1\delta\right)$ factor for Algorithm \ref{alg:clip} }
\begin{tabular}{ |c|c|c|c|c| } 

 \hline
  & $\Delta^d_+$ & $B^d_1$ & $B^d_2$ & $B^d_\infty$ \\ 
  \hline
  Ball & $\sqrt{d}M_2/\varepsilon$&$\sqrt{d}M_2/\varepsilon$&$\sqrt{d}M_2/\varepsilon$&$dM_2/\varepsilon$\\
 \hline
 Entropy & $\ln d M_2 /\varepsilon$ & $\ln d M_2 /\varepsilon$ & $\sqrt{d}\ln d M_2 /\varepsilon$ & $d\ln d M_2 /\varepsilon$\\
 \hline
 \end{tabular}

 \end{table}

 \begin{table}[h!]\label{table: delta}
\centering
 \caption{Maximum feasible noise level $\Delta $ up to $O\left(1\right)$ factor for Algorithm \ref{alg:clip} }
\begin{tabular}{ |c|c|c|c|c| } 

 \hline
  & $\Delta^d_+$ & $B^d_1$ & $B^d_2$ & $B^d_\infty$ \\ 
  \hline
  Ball & $\varepsilon^2 / (\sqrt{d}M_2)$&$\varepsilon^2 / (\sqrt{d}M_2)$&$\varepsilon^2 / (\sqrt{d}M_2)$&$\varepsilon^2 / (d M_2)$\\
 \hline
 Entropy & $\varepsilon^2 / (\sqrt{d\ln d}M_2)$ & $\varepsilon^2 / (\sqrt{d\ln d}M_2)$ & $\varepsilon^2 / (d\sqrt{\ln d}M_2)$ & $\varepsilon^2 / (\sqrt{d^3\ln d}M_2)$\\
 \hline
 \end{tabular}

 \end{table}
 
From these tables, we see that for $\mathcal{X} = \Delta_+^d$ or $ B^d_1$, the Entropy setup is preferable, while the Ball setup allows maximum feasible noise level $\Delta$ to be up to $\sqrt{\ln d}$  greater. Meanwhile, for $\mathcal{X} = B^d_2$ or $ B^d_\infty$, the Ball setup is better in terms of both convergence rate and noise robustness.

\paragraph{\dm{Comparison of two algorithms:  \texttt{ZO-RSMD} and  \texttt{ZO-Clip-SMD} }}
\dm{Despite the fact that both algorithms have the same convergence rates, \texttt{ZO-Clip-SMD} is  more flexible due to the greater freedom of choice of prox-functions $\Psi_p$.
However, its convergence  dramatically depends on the clipping constant $c$ which must be carefully chosen. 
}


\section{ \dm{Algorithms with Restarts:  \texttt{ZO-Restarts} }}
\label{sec:restarts}
In this section, we assume the objective function satisfies the $r$-growth condition \cite{shapiro2021lectures}. In this case, optimization algorithms can be accelerated by using the restart technique \cite{juditsky2014deterministic}.
\begin{assumption}\label{r-growth}
Function $f$ is $r$-growth  function if there are $r \geq 1$ and $\mu_r \geq 0$ such that for all $x$
$$\frac{\mu_r}{2}\|x-x^*\|_{p}^r \leq f(x) - f(x^*),$$
where $x^*$ is problem solution.
\end{assumption}
In particular, the condition of $\mu$-strong convexity w.r.t. the $\ell_p$-norm is the $2$-growth condition. The restart technique works if $\Delta$ is small enough to keep the optimality of Algorithms \ref{alg:Robust} and \ref{alg:clip}.
The general scheme of the restart algorithm is presented below.

\begin{algorithm}[h]
\caption{  \dm{ \texttt{ZO-Restarts} } }\label{alg:restart}
\begin{algorithmic}[1]
\Procedure{\dm{ \texttt{ZO-Restarts} }}{Algorithm type $\mathcal{A}$, number of restarts $N$, sequence of number of steps $\{T_k\}_{k=1}^N$, sequence of smoothing constants $\{\tau_k\}_{k=1}^N$, sequence of stepsizes $\{\nu_k\}_{k=1}^N$, sequence of clipping constants $\{c_k\}_{k=1}^N$(if necessary), prox-function $\Psi_p$}
    
    \State $x_0 \gets \arg\min\limits_{x \in \mathcal{X}} \Psi_{p}(x)$ or randomly
    
    \For{$k = 0, 1, \dots , N$}  

        \State  Set parameters $\nu_k, (c_k), \Psi_p, \tau_k$ of the Algorithm $\mathcal{A}$
        \State Run $T_k $ iterations of the Algorithm $\mathcal{A}$ with starting point  $x_0$ and get $x_{ \rm final}$
        
        \State $x_0 \gets x_{ \rm final}$

    \EndFor

    \State \textbf{return} $x_{ \rm final}$
\EndProcedure
\end{algorithmic}
\end{algorithm}

\dm{The next theorem provides the convergence guarantee for Algorithm \texttt{ZO-Restarts} run with \texttt{ZO-RSMD}.}


\begin{theorem}\label{restart Conv robust}
Let function $f$ satisfy Assumptions \ref{as: convex}, \ref{as: Lipshcitz and bounded}. Let $\varepsilon >0$ be a fixed accuracy and the $r$-growth Assumption \ref{r-growth} holds with $r \geq \frac{1+\kappa}{\kappa}$. 

Set $R_0 \eqdef \sup_{x,y \in \mathcal{X}} \left(\frac{1+\kappa}{\kappa} D_{\Psi_{p}}(x,y) \right)^\frac{\kappa}{1+\kappa}$ and $R_k = R^0/2^k$.

Set the number of restarts $N = \widetilde{O}\left( \frac{1}{r}\log_2\left(\frac{\mu_rR_0^r}{2\varepsilon}\right)\right)$,   sequence of number of steps $\{T_k\}_{k=1}^N = \left\{  \widetilde{O}\left(\left[ \frac{\sigma_q 2^{(1+r)}}{\mu_r R_k^{r-1}} \right]^\frac{1+\kappa}{\kappa} \right)\right\}_{k=1}^N $, sequence of smoothing constants $\{\tau_k\}_{k=1}^N = \left\{ \frac{\sigma_q R_k }{M_2 T_k^{\frac{\kappa}{1+\kappa}}}\right\}_{k=1}^N$ and sequence of stepsizes $\{\nu_k\}_{k=1}^N = \left\{\frac{R_k^{1/\kappa}}{\sigma_{q} }T_k^{-\frac{1}{1+\kappa}}\right\}_{k=1}^N$, where $\sigma_q$ is given in Lemma \ref{lem: grad norm 1 +k intro}. Finally, let Assumption \ref{as: noize} hold with
$$\Delta_k = \widetilde{O}\left(\frac{\mu_r^2 R_0^{(2r - 1)}}{M_2 \sqrt{d}} \frac{1}{2^{k(2r-1)}}\right), \quad 1 \leq k \leq N.$$
If $x_{ \rm final}$ is the final output  of Algorithm \ref{alg:restart} with \dm{\texttt{ZO-RSMD}} (Algorithm \ref{alg:Robust}) as $\mathcal{A}$ and with the above parameters, then
$$\EE[f(x_{ \rm final})] - f(x^*) \leq \varepsilon,$$
and the total number of steps is
$$T = \widetilde{O}\left( \left[ \frac{a_qM_2 \sqrt{d}}{\mu_r^{1/r}} \cdot \frac{1}{\varepsilon^{\frac{(r-1)}{r}}}\right]^\frac{1+\kappa}{\kappa} \right), \quad a_q \eqdef d^{\frac1q - \frac12}  \min \{ \sqrt{32\ln d - 8} , \sqrt{2q - 1}\},$$
and on the last restart the maximum $\Delta$ threshold is
$$\Delta_N =  \widetilde{O}\left(\frac{\mu_r^{1/r} }{M_2 \sqrt{d}} \varepsilon^{(2-1/r)}\right).$$
 \end{theorem}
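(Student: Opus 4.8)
The plan is to treat Theorem~\ref{restart Conv robust} as a consequence of the single-run guarantee of Theorem~\ref{Robust Conv} applied repeatedly, combined with an induction that contracts the distance to $x^*$ by a factor of two at each restart. Concretely, I would maintain the invariant that after the $k$-th restart the output $\hat{x}_k$ satisfies $\EE[f(\hat{x}_k)]-f(x^*)\le\frac{\mu_r}{2}R_k^r$, so that by the $r$-growth Assumption~\ref{r-growth} the point $\hat{x}_k$ lies, on average, within radius $R_k$ of the minimizer. For $k=0$ this holds because $R_0$ is the Bregman diameter, so any admissible starting point is within $R_0$; the whole argument then reduces to showing that one restart advances the invariant from level $k-1$ to level $k$.

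For the inductive step I would feed $\hat{x}_{k-1}$ (at distance $\Theta(R_k)$, since $R_{k-1}=2R_k$) as the starting point of \texttt{ZO-RSMD} and invoke Theorem~\ref{Robust Conv} with the prescribed $\nu_k,\tau_k,T_k$. The three terms of \eqref{eq:conv convergence} must each be shown to be $O(\mu_r R_k^r)$: the choice $T_k=\widetilde{O}\big((\sigma_q 2^{1+r}/(\mu_r R_k^{r-1}))^{(1+\kappa)/\kappa}\big)$ makes the optimization term $\Theta(R_k)\sigma_q/T_k^{\kappa/(1+\kappa)}$ of order $\mu_r R_k^r$; substituting this $T_k$ into $\tau_k=\sigma_q R_k/(M_2 T_k^{\kappa/(1+\kappa)})$ gives $\tau_k=\Theta(\mu_r R_k^r/M_2)$, so that the smoothing term $2M_2\tau_k$ is again $\Theta(\mu_r R_k^r)$; finally the prescription for $\Delta_k$ is exactly what forces the bias term $\sqrt d\,\Delta_k\mathcal{D}_\Psi/\tau_k$ below the same threshold. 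Summing these, $\EE[f(\hat{x}_k)]-f(x^*)\le\frac{\mu_r}{2}R_k^r$, which is the invariant.

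To close the induction I would convert this function-value bound back into a Bregman-distance bound usable by the next call of Theorem~\ref{Robust Conv}. The $r$-growth condition gives $\frac{\mu_r}{2}\EE[\|\hat{x}_k-x^*\|_p^r]\le\EE[f(\hat{x}_k)]-f(x^*)\le\frac{\mu_r}{2}R_k^r$, hence $\EE[\|\hat{x}_k-x^*\|_p^r]\le R_k^r$. Here the hypothesis $r\ge\frac{1+\kappa}{\kappa}$ is essential: since the exponent $\frac{1+\kappa}{\kappa r}\le1$, Jensen's inequality yields $\EE[\|\hat{x}_k-x^*\|_p^{(1+\kappa)/\kappa}]\le(\EE[\|\hat{x}_k-x^*\|_p^r])^{(1+\kappa)/(\kappa r)}\le R_k^{(1+\kappa)/\kappa}$, and a smoothness-type upper bound on $D_{\Psi_p}(x^*,\cdot)$ (complementing the uniform-convexity lower bound of Lemma~\ref{func}) then propagates the radius $R_k$ into the Bregman quantity $\frac{1+\kappa}{\kappa}D_{\Psi_p}(x^*,\hat{x}_k)$ that parametrizes restart $k+1$.

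It remains to aggregate. Choosing $N$ as the first index with $\frac{\mu_r}{2}R_N^r\le\varepsilon$ gives $N=\widetilde{O}\big(\frac1r\log_2(\mu_r R_0^r/(2\varepsilon))\big)$ and $\EE[f(x_{\rm final})]-f(x^*)\le\varepsilon$. For the total work, $T=\sum_{k=1}^N T_k$ is a geometric sum with ratio $2^{(r-1)(1+\kappa)/\kappa}>1$, so it is dominated up to a constant by the last term $T_N$; substituting $R_N=\Theta((\varepsilon/\mu_r)^{1/r})$ and $\sigma_q=\Theta(\sqrt d\,a_q M_2)$ into $T_N$ reproduces the claimed $T=\widetilde{O}\big((a_q M_2\sqrt d/(\mu_r^{1/r}\varepsilon^{(r-1)/r}))^{(1+\kappa)/\kappa}\big)$, and the same substitution at $k=N$ yields $\Delta_N=\widetilde{O}(\mu_r^{1/r}\varepsilon^{2-1/r}/(M_2\sqrt d))$. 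The main obstacle I anticipate is the noise accounting: to obtain the stated, comparatively generous tolerance $\Delta_k=\widetilde{O}(\mu_r^2 R_k^{2r-1}/(M_2\sqrt d))$ rather than a stricter global-diameter bound, the bias term must be controlled using the \emph{local} radius $R_{k-1}$ of the iterates within a restart instead of the full diameter $\mathcal{D}_\Psi$; making this localization rigorous, together with the two-sided comparison between $D_{\Psi_p}$ and $\|\cdot\|_p$ under expectation, is the delicate part of the argument.
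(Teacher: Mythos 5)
Your proposal follows essentially the same route as the paper's own (sketch) proof: a per-restart invocation of Theorem \ref{Robust Conv}, radius halving via the $r$-growth condition together with Jensen's inequality (exactly where $r \ge \frac{1+\kappa}{\kappa}$ enters), the reverse bound $D_{\Psi_{p}}(x^*,x_0) = \widetilde{O}\left(\|x_0 - x^*\|_{p}^{\frac{1+\kappa}{\kappa}}\right)$ (the paper cites \cite{gasnikov2018universal}, Remark 3, for this), and a geometric sum over restarts dominated by the last term, yielding the stated $N$, $T$, and $\Delta_N$. The ``delicate part'' you flag --- replacing the global diameter $\mathcal{D}_\Psi$ in the bias term $\sqrt{d}\Delta\mathcal{D}_{\Psi}/\tau$ by the local radius $R_{k-1}$ so as to justify the generous tolerance $\Delta_k$ --- is handled no more rigorously in the paper itself, whose sketch silently makes this substitution between \eqref{212} and \eqref{780}, so your identification of it as the genuine gap is accurate but does not make your argument different from theirs.
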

\dm{The next theorem provides the convergence guarantee for Algorithm \texttt{ZO-Restarts} run with  \texttt{ZO-Clip-SMD}.}
 \begin{theorem}\label{restart Conv clip}\footnote{In this theorem $\widetilde{O}(\cdot)$ denotes $\log d$ factor for in expectation bounds and $\log d , \log \frac{1}{\delta}$ factors for  in high probability bounds. More explicit formulas are provided in the full proof.} 
Let function $f$ satisfy Assumptions \ref{as: convex}, \ref{as: Lipshcitz and bounded}. Let $\varepsilon >0$ be a fixed accuracy and $r$-growth Assumption \ref{r-growth} holds with $r \geq 2$ for in expectation estimate or $r \geq 1$ for in high probability estimate.
Set $R_0 \eqdef \sup_{x,y \in \mathcal{X}} \left(2 D_{\Psi_{p}}(x,y) \right)^\frac{1}{2}$ and  $R_k = R^0/2^k$.
Set the number of restarts $N = \widetilde{O}\left(\frac{1}{r}\log_2\left(\frac{\mu_rR_0^r}{2\varepsilon}\right)\right)$,   sequence of number of steps $\{T_k\}_{k=1}^N = \left\{\widetilde{O}\left( \left[ \frac{\sigma_q 2^{(1+r)}}{\mu_r R_k^{r-1}} \right]^\frac{1+\kappa}{\kappa} \right)\right\}_{k=1}^N$, sequence of smoothing constants $\{\tau_k\}_{k=1}^N = \left\{ \frac{\sigma_q R_k }{M_2 T_k^{\frac{\kappa}{1+\kappa}}}\right\}_{k=1}^N$, sequence of clipping constants $\{c_k\}_{k=1}^N = \left\{T_k^{\frac{1}{(1+\kappa)}}\sigma_{q}\right\}_{k=1}^N$ and sequence of stepsizes $\{\nu_k\}_{k=1}^N = \left\{\frac{R_k}{c_k}\right\}_{k=1}^N$, where $\sigma_q$ is given in Lemma \ref{lem: grad norm 1 +k intro}. 
Finally, let Assumption \ref{as: noize} hold with
$$\Delta_k = \widetilde{O}\left(\frac{\mu_r^2 R_0^{(2r - 1)}}{M_2 \sqrt{d}} \frac{1}{2^{k(2r-1)}}\right), \quad 1 \leq k \leq N.$$
If $x_{ \rm final}$ is the final output  of Algorithm \ref{alg:restart} with \dm{\texttt{ZO-Clip-SMD}} (Algorithm \ref{alg:clip}) as $\mathcal{A}$ and with the above parameters, then
$$\EE[f(x_{ \rm final})] - f(x^*) \leq \varepsilon,$$
or with probability at least $1-\delta$
$$ f(x_{ \rm final}) - f(x^*) \leq \varepsilon.$$
The total number of steps is
$$T = \begin{cases}\widetilde{O}\left( \left[ \frac{a_qM_2 \sqrt{d}}{\mu_r^{1/r}} \cdot \frac{1}{\varepsilon^{\frac{(r-1)}{r}}}\right]^\frac{1+\kappa}{\kappa} \right),& r > 1\\
\widetilde{O}\left(\left[\frac{a_qM_2 \sqrt{d}}{ \mu_r}\right]^{\frac{1+\kappa}{\kappa}} \log_2\left(\frac{\mu_rR_0}{2\varepsilon}\right)\right),& r = 1\end{cases},$$
$$ a_q \eqdef d^{\frac1q - \frac12}  \min \{ \sqrt{32\ln d - 8} , \sqrt{2q - 1}\},$$
and on the last restart the maximum $\Delta$ threshold is
$$\Delta_N =  \widetilde{O}\left(\frac{\mu_r^{1/r} }{M_2 \sqrt{d}} \varepsilon^{(2-1/r)}\right).$$
 \end{theorem}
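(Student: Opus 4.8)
The plan is to argue by induction over the $N$ restarts, using the $r$-growth condition to convert a function-value guarantee into a geometrically shrinking distance to the optimum. I would define the accuracy sequence $\varepsilon_k \eqdef \frac{\mu_r}{2} R_k^r$ with $R_k = R_0/2^k$, so that $\varepsilon_N \leq \varepsilon$ holds exactly under the stated choice $N = \widetilde{O}\!\left(\frac1r\log_2\frac{\mu_r R_0^r}{2\varepsilon}\right)$. The inductive invariant I would maintain is that the output $x^{(k)}$ of the $k$-th restart satisfies $f(x^{(k)}) - f(x^*) \leq \varepsilon_k$ (in expectation, resp.\ with probability $1 - k\delta/N$ in the high-probability regime); by Assumption \ref{r-growth} this is equivalent to $\|x^{(k)} - x^*\|_p \leq R_k$. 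The base case $k=0$ holds because $R_0$ is the $\Psi_p$-diameter of $\mathcal{X}$, so the initial point is automatically within $R_0$ of $x^*$.

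For the inductive step I would run \texttt{ZO-Clip-SMD} on the restricted feasible set $\mathcal{X} \cap \{x : \|x - x^{(k-1)}\|_p \leq R_{k-1}\}$ (re-centering $\Psi_p$ at $x^{(k-1)}$), whose $\Psi_p$-diameter is of order $R_{k-1} = 2R_k$; by the induction hypothesis $x^*$ stays feasible, so the inner guarantee applies with $x^*$ as comparator. Applying Theorem \ref{Clip Conv} (expectation) or Theorem \ref{Clip Conv prob} (high probability) with $\mathcal{D}_{\Psi} \sim R_{k-1}$ splits the error into the optimization term $\sim R_{k-1}\sigma_q / T_k^{\kappa/(1+\kappa)}$, the smoothing bias $2M_2\tau_k$, and the adversarial term $\frac{\sqrt{d}\Delta_k}{\tau_k}\mathcal{D}_{\Psi}$. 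Substituting $T_k^{\kappa/(1+\kappa)} = \widetilde{O}\!\left(\sigma_q 2^{1+r}/(\mu_r R_k^{r-1})\right)$ makes the optimization term $O(\mu_r R_k^r/2^r)\leq\varepsilon_k$; crucially, inserting this $T_k$ into $\tau_k$ cancels the factor $\sigma_q$, giving $\tau_k = \frac{\mu_r R_k^r}{M_2 2^{1+r}}$ and hence $2M_2\tau_k = \mu_r R_k^r/2^r \leq \varepsilon_k$ with no self-reference. The choice $\Delta_k = \widetilde{O}\!\left(\mu_r^2 R_k^{2r-1}/(M_2\sqrt{d})\right)$ is exactly what is needed to push the adversarial term below $\varepsilon_k$ as well (and to keep the $\tau$-dependent part of $\sigma_q$ dominated by its noise-free part). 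Summing the three pieces yields $f(x^{(k)}) - f(x^*) \leq \varepsilon_k$, and $r$-growth then gives $\|x^{(k)} - x^*\|_p \leq R_k$, closing the induction; after $N$ restarts $\varepsilon_N \leq \varepsilon$.

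It then remains to total the work $T = \sum_{k=1}^N T_k$. Since $T_k = \widetilde{O}\!\left([\sigma_q 2^{1+r} 2^{k(r-1)}/(\mu_r R_0^{r-1})]^{(1+\kappa)/\kappa}\right)$, for $r>1$ the terms grow geometrically in $k$ and the sum is dominated by $T_N$; inserting $2^{N}\approx(\mu_r R_0^r/(2\varepsilon))^{1/r}$ and $\sigma_q \sim \sqrt{d}\,a_q M_2$ (the noise-free part of Lemma \ref{lem: grad norm 1 +k intro}) collapses the bound to $\widetilde{O}\!\left([a_q M_2\sqrt{d}/(\mu_r^{1/r}\varepsilon^{(r-1)/r})]^{(1+\kappa)/\kappa}\right)$. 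For $r=1$ the exponent $k(r-1)$ vanishes, all $T_k$ coincide, and $T = N\cdot T_k$ produces the extra $\log_2(\mu_r R_0/(2\varepsilon))$ factor of the second branch. Evaluating the noise budget at $k=N$ via $R_N^{2r-1} = R_0^{2r-1}/2^{N(2r-1)}$ gives the stated threshold $\Delta_N = \widetilde{O}\!\left(\mu_r^{1/r}\varepsilon^{(2-1/r)}/(M_2\sqrt{d})\right)$.

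The main obstacles are threefold. First, the inner theorems are stated for a fixed compact $\mathcal{X}$ with its global diameter, so I must justify that restricting to a ball of radius $R_{k-1}$ at each restart both keeps $x^*$ feasible and replaces $\mathcal{D}_{\Psi}$ by $\sim R_{k-1}$ in the rate --- this re-centering is the technical heart of the restart argument. Second, in the high-probability branch the per-restart failure probability must be set to $\delta/N$ and combined by a union bound, which is why $\widetilde{O}$ absorbs $\log(N/\delta)$ factors; one must verify this only inflates each $T_k$ logarithmically. Third, the restriction to $r\geq 2$ in the expectation case stems from the mixed-power structure $R_0^{2\kappa/(1+\kappa)}\mathcal{D}_{\Psi}^{(1-\kappa)/(1+\kappa)}$ of the rate in Theorem \ref{Clip Conv}: only when the initial-distance and diameter scales coincide does this collapse to a single power of $R_{k-1}$, and checking the telescoping stays clean requires $r\geq 2$, whereas the single-power rate of Theorem \ref{Clip Conv prob} already permits $r\geq 1$.
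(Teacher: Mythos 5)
Your high-level restart accounting matches the paper's: the halving radii, the choice of $T_k$, the geometric summation for $r>1$ versus the $N\cdot T_k$ count for $r=1$, the evaluation of $\Delta_N$ at $R_N\approx(2\varepsilon/\mu_r)^{1/r}$, and the $\delta/N$ union bound are all correct, and your high-probability branch is essentially sound (re-centering is legitimate there because, on the intersection of the good events, the containment $\|x^{(k-1)}-x^*\|_p\le R_{k-1}$ holds pathwise). The genuine gap is in the expectation branch. Your induction rests on the claim that $\EE[f(x^{(k)})]-f(x^*)\le\varepsilon_k$ is ``equivalent to'' $\|x^{(k)}-x^*\|_p\le R_k$, and you use this containment to keep $x^*$ feasible in the restricted set $\mathcal{X}\cap\{x:\|x-x^{(k-1)}\|_p\le R_{k-1}\}$. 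But an in-expectation bound on the suboptimality gap only yields $\EE\bigl[\|x^{(k)}-x^*\|_p^r\bigr]\le R_k^r$ via the $r$-growth condition; it gives no almost-sure bound, so with positive probability $x^*$ lies outside your restricted ball, and then the inner guarantee of Theorem \ref{Clip Conv} applied on that ball no longer compares against $x^*$ --- the induction breaks. The paper avoids re-centering altogether: each restart runs on the full set $\mathcal{X}$ (only the parameters change), the inner theorem is re-derived for a \emph{stochastic} starting point with $\nu_k$, $c_k$, $\tau_k$ tuned to $\EE[D_{\Psi_{p}}(x^*,x_0)]$ rather than to $\mathcal{D}_\Psi$, and the induction is propagated through second moments using $D_{\Psi_{p}}(x^*,x_0)=\widetilde{O}(\|x_0-x^*\|_p^2)$ together with Jensen's inequality $\EE[\|\cdot\|_p^2]^{r/2}\le\EE[\|\cdot\|_p^r]$.

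That Jensen step is also the true source of the restriction $r\ge2$ in expectation, and your third ``obstacle'' misdiagnoses it: the mixed power $R_0^{2\kappa/(1+\kappa)}\mathcal{D}_\Psi^{(1-\kappa)/(1+\kappa)}$ collapses to a single power of $R_{k-1}$ whenever the two scales coincide, for \emph{every} $r\ge1$, since $\frac{2\kappa}{1+\kappa}+\frac{1-\kappa}{1+\kappa}=1$; nothing there forces $r\ge2$. What forces $r\ge2$ is moment interpolation: the next restart's rate is governed by the second moment of the starting distance, while $r$-growth controls only the $r$-th moment of it, and $\EE[X^2]\le\EE[X^r]^{2/r}$ requires $r\ge2$. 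In the high-probability regime all bounds are pathwise, no interpolation between moments is needed, and $r\ge1$ suffices --- which is exactly the dichotomy in the theorem statement. To repair your expectation branch you must either drop the re-centering and follow the paper's moment-based induction, or find a way to upgrade the in-expectation invariant to a pathwise one, which is not available in this setting.
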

 For the complete proofs of Theorems \ref{restart Conv robust}, \ref{restart Conv clip} we refer to \dm{the Appendix} \ref{sec: restarts proofs}.

\subsection{Discussion}
\paragraph{Maximum admissible level of adversarial noise}
\dm{Next, we compare the maximum value of adversarial noise allowed in  \texttt{ZO-RSMD},  \texttt{ZO-Clip-SMD} and \texttt{ZO-Restarts},}
\begin{align*}
     \text{\texttt{ZO-RSMD} (\ref{alg:Robust}) or \texttt{ZO-Clip-SMD} (\ref{alg:clip})}: \qquad &\Delta =  O\left(\frac{\varepsilon^2}{M_2\sqrt{d}\mathcal{D}_{\Psi}} \right),\\
    \text{\texttt{ZO-Restarts} (\ref{alg:restart}) }: \qquad &\Delta =  \widetilde{O}\left(\frac{\mu_r^{1/r}\varepsilon^{(2-1/r)} }{M_2 \sqrt{d}} \right).
\end{align*}
\dm{We notice $r\geq 1$. When $r=1$, the first bound depends on $\varepsilon$ quadratically whereas in the second bound this dependence is linear. } 
When $r$ tends to infinity, the results are the same.
Also, in the beginning $\Delta_k$ can be much bigger and it starts to decrease as $\Delta_k = \frac{\Delta_1}{2^{k(2r-1)}}$ only on \dm{subsequent restarts to reach the required accuracy.}

\paragraph{$q,d, \varepsilon$ dependencies}
\dm{Next, we compare the oracle complexity of \texttt{ZO-RSMD},  \texttt{ZO-Clip-SMD} and \texttt{ZO-Restarts}.}
Again,  \texttt{ZO-Restarts}  guarantees a better  dependence on $\varepsilon$. Below we state the results  in expectation for $r > 1$
\begin{align*}
     \text{\texttt{ZO-RSMD} (\ref{alg:Robust}) or \texttt{ZO-Clip-SMD} (\ref{alg:clip})}: \qquad &T = O\left(\left[ \frac{\sqrt{d} M_2 \mathcal{D}_\Psi a_q}{\varepsilon}\right]^\frac{1+\kappa}{\kappa}\right) ,\\
     \text{\texttt{ZO-Restarts} (\ref{alg:restart}) }: \qquad &T = \widetilde{O}\left( \left[ \frac{\sqrt{d}M_2 a_q }{\mu_r^{1/r} \varepsilon^{\frac{(r-1)}{r}}}  \right]^\frac{1+\kappa}{\kappa} \right).
\end{align*}
In case of $r=1$ \texttt{ZO-Restarts} achieves linear convergence.


\section{Conclusion and Future Work}
In this paper, we proposed and theoretically studied new zeroth-order algorithms to solve non-smooth optimization problems on a convex 
compact set with zeroth-order oracle corrupted by heavy-tailed stochastic noise  (random noise with $(1 + \kappa)$-th bounded moment)
and adversarial noise. We believe that the convergence rates can be improved with the following  possible modifications:  
\begin{enumerate}
    \item 
different sampling strategy for estimating $g_k$, namely  uniform sampling from the unit $\ell_1$-sphere $\{\mathbf{e}: \|\mathbf{e}\|_1 =1 \}$, see, e.g., \cite{akhavan2022gradient}, \cite{lobanov2022gradient}.
\item different assumption about  adversarial noise, namely,  Lipschitz continuity  
$$|\delta(x_1) - \delta(x_2)| \leq M\|x_1 - x_2\|_2, \qquad \forall x_1, x_2 \in \mathcal{X}$$
 see,  e.g.,\cite{dvinskikh2022gradient}. 
\item adaptive strategies and heuristic methods for choosing input parameters of the algorithm, such as stepsize $\nu$, smoothing constant $\tau$, etc. In practice, these constants are difficult to estimate.
\end{enumerate}
We leave their implementation  for future work. We believe that the technique developed in this paper is rather general and makes it possible to use other stochastic gradient methods to obtain new complexity bounds for zeroth-order algorithms.

Also our results can be generalized to obtain the same complexity bounds for non-smooth convex-concave saddle-point problems in terms of the duality gap used in \cite{beznosikov2020gradient}(rather than the gap used in \cite{dvinskikh2022gradient}).\footnote{See the full version of the paper \cite{dvinskikh2022gradient}.}  We leave this for future work.

\section{Acknowledgments}
The work of A. Gasnikov was supported by a grant for research centers in the field of artificial intelligence, provided
by the Analytical Center for the Government of the Russian Federation in accordance with the subsidy agreement
(agreement identifier 000000D730321P5Q0002 ) and the
agreement with the Ivannikov Institute for System Programming of the Russian Academy of Sciences dated November
2, 2021 No. 70-2021-00142.

\bibliography{Ref.bib}

\newpage
\section{Proofs of Lemmas}\label{sec: Lemmas}
\subsection{General results}
\begin{lemma}\label{lem: Jensen for norm}

\begin{enumerate}
    \item 

    For all $x,y \in \RR^{d'}$ and $\kappa \in (0,1]$: \begin{equation}\label{inq}
    \|x-y\|_q^{1+\kappa} \leq 2^\kappa\|x\|_q^{1+\kappa} + 2^\kappa\|y\|_q^{1+ \kappa},
\end{equation}
\item 
\begin{equation}\label{jensen norm less 1}
\forall x,y \geq 0, \kappa \in [0,1]: (x+y)^\kappa \leq x^\kappa + y^\kappa.
\end{equation}
\end{enumerate}
\end{lemma}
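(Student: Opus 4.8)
The plan is to reduce each of the two inequalities to an elementary scalar fact, so that both parts follow from one-dimensional convexity/concavity considerations rather than anything vector-specific.

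For the first inequality, I would start from the triangle inequality for the $\ell_q$-norm, $\|x-y\|_q \le \|x\|_q + \|y\|_q$, and then raise both sides to the power $1+\kappa$; this step is legitimate because $t \mapsto t^{1+\kappa}$ is increasing on $[0,\infty)$, so the inequality is preserved. It then remains to prove the purely scalar bound $(a+b)^{1+\kappa} \le 2^{\kappa}\,(a^{1+\kappa}+b^{1+\kappa})$ for all $a,b \ge 0$, applied with $a=\|x\|_q$ and $b=\|y\|_q$. Since $1+\kappa \ge 1$, the map $t \mapsto t^{1+\kappa}$ is convex, so the midpoint convexity inequality gives $\left(\frac{a+b}{2}\right)^{1+\kappa} \le \frac12\left(a^{1+\kappa}+b^{1+\kappa}\right)$; multiplying through by $2^{1+\kappa}$ produces exactly the claimed constant $2^{\kappa}$.

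For the second inequality I would first dispose of the degenerate case $x+y=0$ (where both sides vanish), and otherwise normalize by dividing through by $(x+y)^{\kappa}$. Writing $t = x/(x+y) \in [0,1]$, so that $y/(x+y)=1-t$, the claim becomes $t^{\kappa}+(1-t)^{\kappa} \ge 1$. Here I would use that for $\kappa \le 1$ and any $s \in [0,1]$ one has $s^{\kappa} \ge s$ (raising a number of $[0,1]$ to a power at most one does not decrease it). Applying this to $s=t$ and $s=1-t$ and summing yields $t^{\kappa}+(1-t)^{\kappa} \ge t+(1-t)=1$, which is the desired subadditivity of $s \mapsto s^{\kappa}$.

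Neither part presents a genuine obstacle; the only points requiring care are the direction of monotonicity when raising the triangle inequality to a power, and the choice of the sharp scalar inequality in part~1. I would emphasize that the convexity argument delivers the constant $2^{\kappa}$ rather than the cruder $2^{1+\kappa}$ that would come from bounding $a+b \le 2\max\{a,b\}$, and it is precisely this sharper constant that is consumed in the $(1+\kappa)$-th moment estimates (e.g.\ in Lemma~\ref{lem: grad norm 1 +k intro}), so keeping track of it is the one substantive detail.
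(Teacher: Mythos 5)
Your proposal is correct in both parts. For part 1, your argument is essentially the paper's: the paper applies Jensen's inequality directly to the convex vector function $\|\cdot\|_q^{1+\kappa}$, writing $\|x-y\|_q^{1+\kappa} = 2^{1+\kappa}\|x/2 - y/2\|_q^{1+\kappa}$ and bounding the midpoint value, which is exactly the same midpoint-convexity computation you perform; your only difference is that you first invoke the triangle inequality and then apply convexity to the scalar map $t \mapsto t^{1+\kappa}$, which is marginally more elementary since it avoids having to note that the composition of a norm with an increasing convex power is itself convex on $\RR^{d}$. Either way the sharp constant $2^{\kappa}$ comes out, as you correctly emphasize. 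For part 2, your treatment genuinely adds something: the paper does not prove this inequality at all but simply cites Proposition 9 of an external reference, whereas you give a complete two-line argument (normalize by $(x+y)^{\kappa}$, then use $s^{\kappa} \ge s$ for $s \in [0,1]$ and $\kappa \le 1$), making the lemma self-contained. The one small point worth flagging is the edge case $\kappa = 0$ in part 2, where after normalization your target inequality $t^{0}+(1-t)^{0} \ge 1$ still holds trivially, but the step $s^{\kappa} \ge s$ should be read with the convention $0^{0}=1$ when $t \in \{0,1\}$; this does not affect correctness.
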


\begin{proof}
    \begin{enumerate}
        \item By Jensen's inequality for convex $\|\cdot\|_q^{1+\kappa}$ with ${1+\kappa}> 1$
        $$\|x-y\|_q^{1+\kappa} = 2^{1+\kappa}\|x/2-y/2\|_q^{1+\kappa} \leq2^\kappa\|x\|_q^{1+\kappa} + 2^\kappa\|y\|_q^{1+ \kappa}. $$
        \item Proposition $9$ from \cite{vural2022mirror}.
    \end{enumerate}

\end{proof}
\begin{lemma}\label{lem: Lipschitz f }
    Assumption~\ref{as: Lipshcitz and bounded} implies that $f(x)$ is $M_2$ Lipschitz on $\mathcal{X}$.
\end{lemma}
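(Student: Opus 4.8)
The plan is to reduce the Lipschitz property of the deterministic objective $f(x) = \EE_\xi[f(x,\xi)]$ to the pointwise Lipschitz property of $f(x,\xi)$ supplied by Assumption~\ref{as: Lipshcitz and bounded}, together with a single moment-interpolation step. First I would fix arbitrary $x_1, x_2 \in \mathcal{X}$ and write $f(x_1) - f(x_2) = \EE_\xi[f(x_1,\xi) - f(x_2,\xi)]$. Pulling the absolute value inside the expectation (Jensen, or simply the triangle inequality for integrals) gives
$$|f(x_1) - f(x_2)| \leq \EE_\xi\left[|f(x_1,\xi) - f(x_2,\xi)|\right].$$

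Next I would invoke the pointwise bound $|f(x_1,\xi)-f(x_2,\xi)| \leq M_2(\xi)\|x_1-x_2\|_2$ from Assumption~\ref{as: Lipshcitz and bounded}, valid since $\mathcal{X} \subset \mathcal{X}_\tau$; as $\|x_1-x_2\|_2$ is deterministic it factors out, leaving the bound $\EE_\xi[M_2(\xi)]\,\|x_1 - x_2\|_2$. The only remaining task is to control $\EE_\xi[M_2(\xi)]$, and the subtle point is that Assumption~\ref{as: Lipshcitz and bounded} furnishes a bound only on the $(1+\kappa)$-th moment, not on the first moment directly.

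The step that carries the real content is therefore to interpolate the first moment down from the $(1+\kappa)$-th moment. By Lyapunov's inequality (equivalently Jensen's inequality applied to the concave map $t \mapsto t^{1/(1+\kappa)}$ on $[0,\infty)$, using $1+\kappa > 1$),
$$\EE_\xi[M_2(\xi)] \leq \left(\EE_\xi[M_2(\xi)^{1+\kappa}]\right)^{\frac{1}{1+\kappa}} \leq \left(M_2^{1+\kappa}\right)^{\frac{1}{1+\kappa}} = M_2,$$
where the second inequality is exactly the assumed moment bound $\EE_\xi[M_2(\xi)^{1+\kappa}] \leq M_2^{1+\kappa}$. Chaining the three displays yields $|f(x_1)-f(x_2)| \leq M_2\|x_1-x_2\|_2$, which is the claimed $M_2$-Lipschitz continuity on $\mathcal{X}$.

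I do not anticipate a genuine obstacle, since the statement is elementary, but the one place to be careful is precisely this moment interpolation: Lipschitz continuity of the expectation naturally calls for the first moment $\EE_\xi[M_2(\xi)]$, whereas the hypothesis only bounds the higher moment, so the downward Jensen/Lyapunov passage is the indispensable glue rather than an optional convenience. As a minor but worthwhile remark, the same bound guarantees $\EE_\xi[M_2(\xi)] \leq M_2 < \infty$, so every expectation above is finite and all the manipulations are justified.
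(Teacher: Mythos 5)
Your proof is correct and follows exactly the same route as the paper's: Jensen's inequality to move the absolute value inside the expectation, the pointwise $M_2(\xi)$-Lipschitz bound from Assumption~\ref{as: Lipshcitz and bounded}, and then the Lyapunov/Jensen step $\EE_\xi[M_2(\xi)] \leq \left(\EE_\xi[M_2(\xi)^{1+\kappa}]\right)^{\frac{1}{1+\kappa}} \leq M_2$. Your explicit remark that this moment interpolation is the essential step (and that it also guarantees finiteness of all expectations involved) is a nice touch, but the argument itself is identical to the paper's.
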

\begin{proof}
    For all $x,y \in \mathcal{X}$
    \begin{eqnarray}|f(x) - f(y)| &=& |\EE_\xi[f(x,\xi) - f(y,\xi)]| \overset{\text{Jensen's inq}}{\leq}  \EE_\xi[|f(x,\xi) - f(y,\xi)|] \notag \\
    &\leq& \EE_\xi[M_2]\|x-y\|_2 \overset{\text{Jensen's inq}}{\leq} \EE_\xi[M_2^{(1+\kappa)}]^\frac{1}{1+\kappa} \|x-y\|_2 \notag \\
    &\leq&  M_2 \|x-y\|_2. \notag \end{eqnarray}
\end{proof}

\subsection{Smoothing}
\begin{lemma}\label{3L}
Let $f(x)$ be $M_2$ Lipschitz continuous function w.r.t $\|\cdot\|_2$. If $\mathbf{e}$ is random and uniformly distributed on the Euclidean sphere and $\kappa \in (0,1]$, then 
$$\EE_\mathbf{e} \left[ \left(f(\mathbf{e}) - \EE_\mathbf{e} [f(\mathbf{e})] \right)^{2(1 + \kappa)}\right] \leq \left( \frac{bM_2^2}{d}\right)^{1 + \kappa}, \quad b = \frac{1}{\sqrt{2}}.$$
\end{lemma}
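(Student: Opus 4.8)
The plan is to reduce the high even moment to a fourth central moment and then to control that fourth moment through concentration of measure on the sphere. Write $X \eqdef f(\mathbf{e}) - \EE_\mathbf{e}[f(\mathbf{e})]$, a centered random variable; the target is a bound on $\EE_\mathbf{e}[X^{2(1+\kappa)}]$.

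First I would exploit the restriction $\kappa \in (0,1]$, which forces $1 < 1+\kappa \le 2$ and hence $2 < 2(1+\kappa) \le 4$. Applying the power-mean (Lyapunov) inequality to the nonnegative variable $X^2$ with exponents $1+\kappa \le 2$ gives
\begin{equation*}
\EE_\mathbf{e}\big[X^{2(1+\kappa)}\big] = \EE_\mathbf{e}\big[(X^2)^{1+\kappa}\big] \le \big(\EE_\mathbf{e}[X^4]\big)^{\frac{1+\kappa}{2}}.
\end{equation*}
Thus it suffices to establish a fourth-moment estimate of the form $\EE_\mathbf{e}[X^4] \le (b M_2^2/d)^2$, since raising it to the power $(1+\kappa)/2$ would reproduce the claimed bound $(b M_2^2/d)^{1+\kappa}$ with the very same constant $b$.

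Second, to bound $\EE_\mathbf{e}[X^4]$ I would use that a uniform point on the Euclidean unit sphere enjoys the concentration-of-measure property: for an $M_2$-Lipschitz $f$ the centered value $X$ is sub-Gaussian with proxy variance of order $M_2^2/d$, so that $\PP(|X| \ge t) \le 2\exp(-c\,d\,t^2/M_2^2)$ for a suitable constant $c$ (this is the same spherical concentration that underlies the measure-concentration lemma invoked elsewhere in the paper). Integrating the tail via $\EE_\mathbf{e}[X^4] = \int_0^\infty 4 t^3 \PP(|X| \ge t)\,dt$ and evaluating the resulting Gaussian integral yields a bound of the form $\mathrm{const}\cdot (M_2^2/d)^2$, which is exactly the shape needed.

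The main obstacle is pinning down the sharp numerical constant. The Poincaré inequality on the sphere controls only the variance, $\Var(f) \le M_2^2/(d-1)$, which by itself produces the wrong power of $d$; the extra decay required for the fourth moment — and hence, through the Lyapunov step, for the $2(1+\kappa)$-th moment — genuinely relies on the stronger sub-Gaussian concentration. Consequently it is the careful tracking of the constants in the spherical concentration estimate, rather than any conceptual difficulty, that is delicate and that is meant to produce the stated factor $b = \tfrac{1}{\sqrt2}$; the Lyapunov reduction and the tail integration are otherwise routine.
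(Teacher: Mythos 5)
Your proposal is correct and takes essentially the same route as the paper: both arguments rest on the sub-Gaussian concentration inequality for Lipschitz functions on the Euclidean sphere, $\Prob\left(|f(\mathbf{e})-\EE_\mathbf{e}[f(\mathbf{e})]|>t\right)\le 2\exp(-b'dt^2/M_2^2)$, followed by a layer-cake tail integration; your only deviation is the preliminary Lyapunov reduction to the fourth moment, whereas the paper integrates the $2(1+\kappa)$-th moment directly, and this changes nothing essential (both routes produce the same effective constant, exactly so at $\kappa=1$). The constant tracking you defer is indeed the only loose end, but it is equally loose in the paper's own proof: with the stated concentration constant $b'=2$, both computations actually give the bound with effective constant $b=1$ rather than the claimed $b=1/\sqrt{2}$.
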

\begin{proof}
A standard result of the measure concentration on the Euclidean unit sphere implies that for all $ t > 0$
\begin{equation}\label{measure conc on sphere prob}
    Pr\left(|f(\mathbf{e}) -\EE_\mathbf{e}[f(\mathbf{e})] | > t\right) \leq 2\exp(-b' d t^2/M_2^2), \quad b' = 2 \end{equation}
(see the proof of Proposition 2.10 and Corollary 2.6 in \cite{ledoux2005concentration}). 
Therefore,
\begin{align*}
\EE_\mathbf{e} \left[ \left(f(\mathbf{e}) - \EE_\mathbf{e} [f(\mathbf{e})] \right)^{2(1 + \kappa)}\right] &=\int\limits_{t=0}^\infty Pr\left(|f(\mathbf{e}) -\EE_\mathbf{e}[f(\mathbf{e})] |^{2(1 + \kappa)} > t\right) dt \\ 
&= \int\limits_{t=0}^\infty Pr\left(|f(\mathbf{e}) -\EE_\mathbf{e}[f(\mathbf{e})] | > t^{\frac{1}{2(1 + \kappa)}}\right) dt \\
&\leq \int \limits_{t=0}^\infty 2 \exp\left(-b' d t^{\frac{1}{(1 + \kappa)}}/M_2^2\right) dt  \leq \left(\frac{bM_2^2}{d}\right)^{1+\kappa} .
\end{align*}

\end{proof}
The following lemma gives some useful facts about the measure concentration on the Euclidean unit sphere.
\begin{lemma}\label{upper bounds}
For $q \geq 2, \kappa \in (0,1]$  we get
$$\EE_\mathbf{e} \left[\|\mathbf{e}\|_q^{2(1+\kappa)}\right]  \leq a_{q}^{2(1+\kappa)} \eqdef d^{\frac1q - \frac12}  \min \{ \sqrt{32\ln d - 8} , \sqrt{2q - 1}\}.$$ 
\end{lemma}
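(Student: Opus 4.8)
The plan is to reduce the high moment $\EE_{\mathbf e}[\|\mathbf e\|_q^{2(1+\kappa)}]$ to the second moment $\EE_{\mathbf e}[\|\mathbf e\|_q^{2}]$ and then to bound that second moment by two competing estimates whose minimum reproduces the constant $a_q$. Two elementary facts are used throughout: for $q\ge 2$ the map $\mathbf e\mapsto\|\mathbf e\|_q$ is $1$-Lipschitz w.r.t. $\|\cdot\|_2$ (since $|\,\|x\|_q-\|y\|_q\,|\le\|x-y\|_q\le\|x-y\|_2$), and $\|\mathbf e\|_q\le\|\mathbf e\|_2=1$. Write $a_q^2=d^{2/q-1}\min\{32\ln d-8,\,2q-1\}$, so that the target is $\EE_{\mathbf e}[\|\mathbf e\|_q^{2(1+\kappa)}]\le(a_q^2)^{1+\kappa}=a_q^{2(1+\kappa)}$.

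For the reduction step I would distinguish two regimes. When $a_q^2\ge 1$, the bound $\|\mathbf e\|_q\le 1$ gives at once $\EE_{\mathbf e}[\|\mathbf e\|_q^{2(1+\kappa)}]=\EE_{\mathbf e}[\|\mathbf e\|_q^{2}\,\|\mathbf e\|_q^{2\kappa}]\le\EE_{\mathbf e}[\|\mathbf e\|_q^{2}]\le a_q^2\le(a_q^2)^{1+\kappa}$, the last step using $a_q^2\ge1$. When $a_q^2<1$ (the large-$q$/$\ell_\infty$ regime) a concentration argument is needed, and this is exactly where Lemma~\ref{3L} enters. Setting $\mu=\EE_{\mathbf e}\|\mathbf e\|_q$ and using $(a+b)^{2(1+\kappa)}\le 2^{2\kappa+1}(a^{2(1+\kappa)}+|b|^{2(1+\kappa)})$ with $a=\mu,\ b=\|\mathbf e\|_q-\mu$, I get
$$\EE_{\mathbf e}[\|\mathbf e\|_q^{2(1+\kappa)}]\le 2^{2\kappa+1}\mu^{2(1+\kappa)}+2^{2\kappa+1}\EE_{\mathbf e}\big[(\|\mathbf e\|_q-\mu)^{2(1+\kappa)}\big].$$
Lemma~\ref{3L} applied to $f=\|\cdot\|_q$ (so $M_2=1$) bounds the fluctuation term by $(1/(\sqrt2\,d))^{1+\kappa}$, which is dominated by $(a_q^2)^{1+\kappa}$ since $a_q^2\ge 3/d$; and Jensen gives $\mu\le(\EE_{\mathbf e}\|\mathbf e\|_q^{2})^{1/2}$, so $\mu^{2(1+\kappa)}\le(\EE_{\mathbf e}\|\mathbf e\|_q^{2})^{1+\kappa}$. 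Thus everything again reduces, up to an absolute constant, to the second-moment estimate.

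It then remains to prove $\EE_{\mathbf e}\|\mathbf e\|_q^{2}\le a_q^2$ by two independent routes. For the $\sqrt{2q-1}$ branch I would use concavity of $t\mapsto t^{2/q}$ (valid for $q\ge2$) to write $\EE_{\mathbf e}\|\mathbf e\|_q^{2}\le(\EE_{\mathbf e}\|\mathbf e\|_q^{q})^{2/q}=(d\,\EE|e_1|^q)^{2/q}$, and then bound the single-coordinate moment through the representation $e_1\overset{d}{=}g_1/\|g\|_2$ with $g\sim\mathcal N(0,I_d)$, which reduces $\EE|e_1|^q$ to a ratio of Gamma functions $\tfrac{\Gamma((q+1)/2)\,\Gamma(d/2)}{\sqrt\pi\,\Gamma((d+q)/2)}$; the factor $2q-1$ comes out of the estimate $\EE|e_1|^q\le\big(d^{-1}(2q-1)\big)^{q/2}$. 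For the $\sqrt{32\ln d-8}$ branch I would instead use $\|\mathbf e\|_q\le d^{1/q}\|\mathbf e\|_\infty$, so $\EE_{\mathbf e}\|\mathbf e\|_q^{2}\le d^{2/q}\,\EE_{\mathbf e}\|\mathbf e\|_\infty^{2}$, and bound $\EE_{\mathbf e}\|\mathbf e\|_\infty^2=\EE_{\mathbf e}\max_i e_i^2$ by a sub-Gaussian maximal inequality: applying the concentration bound \eqref{measure conc on sphere prob} to each coordinate $f=e_i$ (mean zero, $1$-Lipschitz) yields $\Pr(|e_i|>t)\le 2\exp(-2dt^2)$, and a union bound followed by integrating the tail gives $\EE_{\mathbf e}\max_i e_i^2\le d^{-1}(32\ln d-8)$. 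Taking the smaller of the two bounds gives $\EE_{\mathbf e}\|\mathbf e\|_q^2\le a_q^2$.

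The main obstacle is this final second-moment step, and specifically the extraction of the two sharp constants: the $\sqrt{2q-1}$ estimate requires controlling the Gamma-ratio $\Gamma(d/2)/\Gamma((d+q)/2)$ and $\Gamma((q+1)/2)$ for non-integer $q$ (where the clean combinatorial identity $\EE e_1^{2m}=(2m-1)!!/\prod_{j=0}^{m-1}(d+2j)$ is no longer available), while the $\sqrt{32\ln d-8}$ estimate requires propagating the precise constant $b'=2$ from \eqref{measure conc on sphere prob} through the maximal inequality. By contrast the reduction from the $2(1+\kappa)$-moment to the second moment is routine once Lemma~\ref{3L} is available; the only care needed there is to absorb the absolute constant arising from the $(a+b)^{2(1+\kappa)}$ splitting into the (deliberately generous) constants defining $a_q$.
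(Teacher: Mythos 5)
Your reduction from the $2(1+\kappa)$-th moment to the second moment is where the argument fails, and it fails precisely in the regime you introduce it for. When $a_q^2<1$ you split $\|\mathbf{e}\|_q=\mu+(\|\mathbf{e}\|_q-\mu)$ and invoke $(a+b)^{2(1+\kappa)}\le 2^{2\kappa+1}\bigl(a^{2(1+\kappa)}+|b|^{2(1+\kappa)}\bigr)$, paying a factor $2^{2\kappa+1}\in(2,8]$, and you then assert this factor can be absorbed into the ``deliberately generous'' constants of $a_q$. That is true on the $\sqrt{32\ln d-8}$ branch, where the union-bound estimate leaves an order of magnitude of slack, but it is false on the $\sqrt{2q-1}$ branch. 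The coordinate bound you state, $\EE|e_1|^q\le\bigl((2q-1)/d\bigr)^{q/2}$, gives $\EE\|\mathbf{e}\|_q^2\le d^{2/q-1}(2q-1)=a_q^2$ with \emph{zero} slack, so after multiplication by $2^{2\kappa+1}>1$ the claimed inequality is already lost; and even the sharper estimate \eqref{aux} used at exponent $q$, which gives $\EE\|\mathbf{e}\|_q^2\le d^{2/q-1}(q-1)$, leaves slack only $\bigl(\tfrac{2q-1}{q-1}\bigr)^{1+\kappa}$, which tends to $2^{1+\kappa}<2^{2\kappa+1}$ as $q$ grows. Concretely, take $\kappa=1$, $q=10$, $d=100$: then $a_q^2=100^{-4/5}\cdot19\approx0.48<1$ and the minimum is attained by $2q-1$, so you are in the problematic case; your chain of estimates yields at best $2^{3}\bigl(100^{-4/5}\cdot 9\bigr)^{2}\approx 1.8\,(a_q^2)^{2}$ before the (positive) fluctuation term from Lemma~\ref{3L} is even added, so the stated inequality is not established. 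Closing the gap along your route would require a coordinate moment bound substantially sharper than the one you target, e.g.\ $\EE|e_1|^q\le(cq/d)^{q/2}$ with $c$ around $1/2$ --- exactly the Gamma-ratio control that you flag as the main obstacle and do not carry out.

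The paper avoids this problem by never splitting off the mean, and that is the structural difference between the two arguments. Since $q\ge2\ge1+\kappa$, the map $t\mapsto t^{(1+\kappa)/q}$ is concave, so Jensen applied directly to $\|\mathbf{e}\|_q^{2(1+\kappa)}=\bigl(\bigl(\sum_{k=1}^d|e_k|^q\bigr)^{2}\bigr)^{(1+\kappa)/q}$ gives $\EE\bigl[\|\mathbf{e}\|_q^{2(1+\kappa)}\bigr]\le\bigl(\EE\bigl[\bigl(\sum_{k=1}^d|e_k|^q\bigr)^{2}\bigr]\bigr)^{(1+\kappa)/q}$ at no cost in constants; then $\bigl(\sum_k x_k\bigr)^2\le d\sum_k x_k^2$ and \eqref{aux} applied at exponent $2q$ produce exactly $\bigl(d^{2/q-1}(2q-1)\bigr)^{1+\kappa}$, and the $\sqrt{32\ln d-8}$ branch comes for free from the monotonicity $\|x\|_{q}\le\|x\|_{q_0}$ for $q\ge q_0$ by optimizing $q'\mapsto d^{1/q'-1/2}\sqrt{2q'-1}$ (optimum $q_0\asymp\ln d$), rather than from a separate coordinate-wise maximal inequality. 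Your easy case $a_q^2\ge1$ and both of your second-moment routes are sound as far as they go; if you replace the mean/fluctuation split by this direct Jensen step, your argument essentially collapses into the paper's proof.
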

This lemma is generalization of Lemma from \cite{gorbunov2019upper} for $\kappa < 1$.
\begin{proof}

We use Lemma $1$  from Theorem $1$ from \cite{gorbunov2019upper} which states that
\begin{enumerate}
    \item Let $e_k$ be $k$-th component of $\mathbf{e}$ then next inequality holds true
\begin{equation}\label{aux}
\EE\left[ |e_k|^q\right] \leq \left( \frac{q-1}{d}\right)^{\frac{q}{2}}, \quad q \geq 2.\end{equation}
\item For any $ x \in \RR^d$  and $q_1 \geq q_2$ we get
\begin{equation}\label{aux2}
\|x\|_{q_1} \leq \|x\|_{q_2},
\end{equation}
\end{enumerate}
We rewrite our objective value as
$$\EE_\mathbf{e}\left[ \|\mathbf{e}\|_q^{2(1+\kappa)} \right] = \EE_\mathbf{e} \left[ \left( \left( \sum\limits_{k=1}^d |e_k|^q \right)^2\right)^\frac{1+\kappa}{q}\right]. $$
Due to Jensen's inequality and equally distributed $e_k$
we obtain
$$ \EE_\mathbf{e} \left[ \left( \left( \sum\limits_{k=1}^d |e_k|^q \right)^2\right)^\frac{1+\kappa}{q}\right]\leq \left( \EE_\mathbf{e}\left[ \left( \sum\limits_{k=1}^d |e_k|^q \right)^2\right]\right)^\frac{1+\kappa}{q}.$$
We use fact that for all $ x_k \geq 0,  k = \overline{1,d} $ 
$$d \sum\limits_{k=1}^d x_k^2 \geq \left( \sum \limits_{k=1}^d x_k\right)^2.$$
Therefore, we estimate
$$\left( \EE_\mathbf{e}\left[ \left( \sum\limits_{k=1}^d |e_k|^q \right)^2\right]\right)^\frac{1+\kappa}{q} \leq \left( d\EE_\mathbf{e}\left[ \sum\limits_{k=1}^d |e_k|^{2q} \right]\right)^\frac{1+\kappa}{q} = (d^2 \EE_\mathbf{e}[|e_k|^{2q}])^\frac{1+\kappa}{q}.$$
Using  \eqref{aux} with $2q$ we continue chain of previous inequalities
$$(d^2 \EE_\mathbf{e}[|e_2|^{2q}])^\frac{1+\kappa}{q} \leq d^{\frac{2(1+\kappa)}{q}} \left( \frac{2q-1}{d}\right)^{1+\kappa} = \left(d^{\frac2q - 1} (2q - 1) \right)^{1+\kappa}.$$ 
Thus, by definition  of $a_{q}$ and obtained estimates we conclude
$$a_{q} = \sqrt{d^{\frac2q - 1} (2q - 1) }.$$
With fixed $d$ and large $q$ more precise upper bound can be obtained. We define function $h_d(q)$ and find its minimum with fixed $d$.
$$h_d(q)  = \ln\left(\sqrt{d^{\frac2q - 1} (2q - 1) }\right) = \left(\frac{1}{q} -\frac12\right)\ln(d) + \frac12 \ln(2q-1),$$
$$\frac{d h_d(q)}{dq} = \frac{-\ln(d)}{q^2} + \frac{1}{2q - 1} = 0,$$
$$q^2 -2\ln(d)q + \ln(d) = 0.$$
When $d \geq 3$ minimal point $q_0$ lies in $[2, +\infty)$
$$q_0 = (\ln d) \left( 1 + \sqrt{1 - \frac{1}{\ln d}}\right), \qquad \ln d \leq q_0 \leq 2\ln d.$$
When $q \geq q_0$ we obtain from \eqref{aux2}
\begin{align*}
    a_{q}  < a_{q_0}  
    &= \sqrt{d^{\frac2q_0 - 1} (2q_0 - 1)} \leq d^{\frac{1}{\ln d} - \frac12} \sqrt{4\ln d - 1} \\
    &=\frac{e}{\sqrt{d}}\sqrt{4\ln d - 1} \leq d^{\frac1q - \frac12}\sqrt{32\ln d - 8},
\end{align*}
Consequently, we get
$$a_{q} = d^{\frac1q - \frac12}  \min \{ \sqrt{32\ln d - 8} , \sqrt{2q - 1}\}. $$

\end{proof}
\begin{lemma}\label{inner product estimate}
For the random  vector $\mathbf{e}$ uniformly distributed on the Euclidean sphere $\{\mathbf{e} \in \RR^d: \|\mathbf{e}\|_2 = 1\}$ and for any $r \in \RR^d$, we have
$$\EE_\mathbf{e}[|\langle \mathbf{e}, r \rangle|] \leq \frac{\|r\|_2}{\sqrt{d}}.$$
\end{lemma}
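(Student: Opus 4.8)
The plan is to prove the bound $\EE_\mathbf{e}[|\langle \mathbf{e}, r\rangle|] \leq \|r\|_2/\sqrt{d}$ by reducing to a single scalar quantity via symmetry and then applying Jensen's inequality. First I would observe that by scale invariance it suffices to handle the case $r \neq 0$; if $r = 0$ the claim is trivial. Writing $\hat r = r/\|r\|_2$ for the unit vector in the direction of $r$, linearity of the inner product gives $\EE_\mathbf{e}[|\langle \mathbf{e}, r\rangle|] = \|r\|_2\,\EE_\mathbf{e}[|\langle \mathbf{e}, \hat r\rangle|]$, so the task collapses to showing $\EE_\mathbf{e}[|\langle \mathbf{e}, \hat r\rangle|] \leq 1/\sqrt{d}$ for an arbitrary unit vector $\hat r$.

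The key step is to exploit rotational invariance of the uniform distribution on the sphere $S^d_2$. Because that distribution is invariant under orthogonal transformations, the law of $\langle \mathbf{e}, \hat r\rangle$ does not depend on which unit vector $\hat r$ we choose; in particular I may take $\hat r$ to be the first coordinate vector, so that $\langle \mathbf{e}, \hat r\rangle = e_1$, the first component of $\mathbf{e}$. Thus $\EE_\mathbf{e}[|\langle \mathbf{e}, \hat r\rangle|] = \EE_\mathbf{e}[|e_1|]$, and it remains to bound the expected absolute value of a single coordinate.

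Next I would control $\EE_\mathbf{e}[|e_1|]$ by passing to the second moment through Jensen's inequality, $\EE_\mathbf{e}[|e_1|] \leq (\EE_\mathbf{e}[e_1^2])^{1/2}$, and then computing $\EE_\mathbf{e}[e_1^2]$ exactly. By symmetry all coordinates have the same second moment, and since $\|\mathbf{e}\|_2^2 = \sum_{k=1}^d e_k^2 = 1$ identically on the sphere, taking expectations gives $d\,\EE_\mathbf{e}[e_1^2] = 1$, hence $\EE_\mathbf{e}[e_1^2] = 1/d$. Combining these yields $\EE_\mathbf{e}[|e_1|] \leq 1/\sqrt{d}$, and multiplying back by $\|r\|_2$ gives the claim.

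I do not expect a genuine obstacle here, as this is an elementary computation, but the one point requiring a little care is the justification that $\langle \mathbf{e}, \hat r\rangle$ has the same distribution as $e_1$: this rests on the rotational invariance of the uniform measure on $S^d_2$, which must be invoked cleanly. An alternative that avoids explicitly naming the distributional symmetry is to note $\EE_\mathbf{e}[|e_k|]$ is the same for every coordinate $k$ and bound $\EE_\mathbf{e}[|\langle \mathbf{e}, \hat r\rangle|]$ directly, but the rotational-invariance route is the cleanest. Note also that this lemma is a special case ($\kappa$ effectively set so that only the first moment appears) of the coordinate moment estimate \eqref{aux} used in Lemma~\ref{upper bounds}, so the same second-moment bound $\EE_\mathbf{e}[e_k^2] \leq 1/d$ could be cited from there if one prefers to reuse machinery already established.
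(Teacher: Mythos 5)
Your proposal is correct. Note that the paper itself gives no proof of this lemma at all: it is stated in the appendix as a standard fact about measure concentration on the sphere, with no argument supplied, so there is nothing to compare against on the paper's side. Your argument cleanly fills that gap: the reduction to a unit vector, the use of rotational invariance of the uniform measure on $S^d_2$ to identify the law of $\langle \mathbf{e}, \hat r\rangle$ with that of the first coordinate $e_1$, Jensen's inequality $\EE_\mathbf{e}[|e_1|] \leq (\EE_\mathbf{e}[e_1^2])^{1/2}$, and the exact computation $\EE_\mathbf{e}[e_1^2] = 1/d$ from $\sum_{k=1}^d e_k^2 = 1$ together give the stated bound, and in fact the second-moment identity is exact (only Jensen introduces slack). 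Your closing alternative also works: even without invoking rotational invariance by name, one can expand $\EE_\mathbf{e}[\langle \mathbf{e}, \hat r\rangle^2] = \sum_k \hat r_k^2\,\EE_\mathbf{e}[e_k^2]$, since the cross terms $\EE_\mathbf{e}[e_j e_k]$ vanish for $j \neq k$ by sign symmetry, which again yields $1/d$; this is marginally more self-contained than citing inequality \eqref{aux} from Lemma~\ref{upper bounds}, whose case $q=2$ gives only the inequality $\EE_\mathbf{e}[e_k^2] \leq 1/d$ rather than the identity.
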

\begin{lemma}\label{inner product grad r}
Let $g(x, \xi, \mathbf{e})$  be defined in \eqref{g} and $\hat{f}_\tau(x)$ be defined in \eqref{hat_f}. Then, the following holds under Assumption \ref{as: noize}:
$$\EE_{\xi, \mathbf{e}} [\langle g(x,\xi, \mathbf{e}), r \rangle] \geq \langle \nabla \hat{f}_\tau(x) , r \rangle - \frac{d \Delta}{\tau}\EE_\mathbf{e} [|\langle \mathbf{e}, r \rangle|] $$
for any $r \in \RR^d.$
\end{lemma}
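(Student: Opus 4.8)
The plan is to relate the expectation of the inner product $\EE_{\xi,\mathbf{e}}[\langle g(x,\xi,\mathbf{e}),r\rangle]$ to the gradient $\nabla\hat{f}_\tau(x)$ by splitting the gradient estimate \eqref{g} into its clean (noiseless) part and its adversarial part. Writing $g = g_{\mathrm{clean}} + g_{\mathrm{noise}}$, where
\[
g_{\mathrm{clean}}(x,\xi,\mathbf{e}) = \frac{d}{2\tau}\bigl(f(x+\tau\mathbf{e},\xi)-f(x-\tau\mathbf{e},\xi)\bigr)\mathbf{e},
\qquad
g_{\mathrm{noise}}(x,\mathbf{e}) = \frac{d}{2\tau}\bigl(\delta(x+\tau\mathbf{e})-\delta(x-\tau\mathbf{e})\bigr)\mathbf{e},
\]
the linearity of the inner product and of expectation gives $\EE_{\xi,\mathbf{e}}[\langle g,r\rangle] = \EE_{\xi,\mathbf{e}}[\langle g_{\mathrm{clean}},r\rangle] + \EE_{\mathbf{e}}[\langle g_{\mathrm{noise}},r\rangle]$.

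First I would handle the clean part. I expect the symmetric finite difference $\frac{d}{2\tau}(f(x+\tau\mathbf{e},\xi)-f(x-\tau\mathbf{e},\xi))\mathbf{e}$ to be an unbiased estimator of $\nabla\hat{f}_\tau(x)$ when there is no noise; this is essentially the content of part~2 of Lemma~\ref{lem: hat_f properties intro}, since taking expectation over $\xi$ first yields the deterministic symmetrized estimator $\frac{d}{2\tau}(f(x+\tau\mathbf{e})-f(x-\tau\mathbf{e}))\mathbf{e}$, and averaging over $\mathbf{e}\sim U(S^d_2)$ reproduces the gradient formula $\nabla\hat{f}_\tau(x)=\EE_\mathbf{e}[\frac{d}{\tau}f(x+\tau\mathbf{e})\mathbf{e}]$ (the minus-term contributes identically by the symmetry $\mathbf{e}\mapsto-\mathbf{e}$ of the uniform distribution on the sphere). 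Hence $\EE_{\xi,\mathbf{e}}[\langle g_{\mathrm{clean}},r\rangle] = \langle \nabla\hat{f}_\tau(x),r\rangle$.

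Next I would bound the noise part from below. Using $|\delta(\cdot)|\le\Delta$ from Assumption~\ref{as: noize}, the scalar prefactor $\frac{d}{2\tau}(\delta(x+\tau\mathbf{e})-\delta(x-\tau\mathbf{e}))$ has absolute value at most $\frac{d}{2\tau}\cdot 2\Delta = \frac{d\Delta}{\tau}$. Therefore
\[
\bigl|\langle g_{\mathrm{noise}},r\rangle\bigr| = \Bigl|\frac{d}{2\tau}\bigl(\delta(x+\tau\mathbf{e})-\delta(x-\tau\mathbf{e})\bigr)\Bigr|\,\bigl|\langle\mathbf{e},r\rangle\bigr| \le \frac{d\Delta}{\tau}\bigl|\langle\mathbf{e},r\rangle\bigr|,
\]
and taking expectation over $\mathbf{e}$ and using $\EE[\langle g_{\mathrm{noise}},r\rangle]\ge -\EE|\langle g_{\mathrm{noise}},r\rangle|$ yields $\EE_\mathbf{e}[\langle g_{\mathrm{noise}},r\rangle]\ge -\frac{d\Delta}{\tau}\EE_\mathbf{e}[|\langle\mathbf{e},r\rangle|]$. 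Combining the two parts gives exactly the claimed inequality.

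The only subtle point—and the step I would check most carefully—is the identification of the expectation of the clean symmetric estimator with $\nabla\hat{f}_\tau(x)$. The gradient formula in Lemma~\ref{lem: hat_f properties intro} is stated as a one-sided expression $\EE_\mathbf{e}[\frac{d}{\tau}f(x+\tau\mathbf{e})\mathbf{e}]$, so I must verify that the two-sided difference halves correctly, i.e.\ that $\EE_\mathbf{e}[\frac{d}{2\tau}f(x-\tau\mathbf{e})\mathbf{e}] = -\EE_\mathbf{e}[\frac{d}{2\tau}f(x+\tau\mathbf{e})\mathbf{e}]$, which follows from the reflection symmetry of the uniform measure on the sphere and a change of variables $\mathbf{e}\mapsto-\mathbf{e}$. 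Everything else is just linearity of expectation, the triangle inequality, and the boundedness of $\delta$; no measure-concentration machinery is needed here, so this lemma is genuinely elementary once the unbiasedness fact is invoked.
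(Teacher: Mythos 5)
Your proposal is correct and follows essentially the same route as the paper's proof: split $g$ into the $f$-part and the $\delta$-part, use the reflection symmetry $\mathbf{e}\mapsto-\mathbf{e}$ of the uniform distribution on the sphere (together with taking the expectation over $\xi$ first) to identify the clean symmetric-difference term with $\langle \nabla \hat{f}_\tau(x), r\rangle$ via Lemma~\ref{lem: hat_f properties intro}, and bound the adversarial term by $\frac{d\Delta}{\tau}\EE_\mathbf{e}[|\langle \mathbf{e}, r\rangle|]$ using $|\delta|\leq\Delta$. The subtle point you flag (the two-sided difference halving correctly under the symmetry of $\mathbf{e}$) is exactly the step the paper invokes, so nothing is missing.
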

\begin{proof}
We remind that by definition \eqref{g} of estimated gradient $g$

$$g(x, \xi, \mathbf{e}) = \frac{d}{2\tau}(f(x + \tau \mathbf{e}, \xi) + \delta(x + \tau \mathbf{e}) - f(x - \tau \mathbf{e}, \xi) - \delta(x - \tau \mathbf{e})) \mathbf{e}.$$
Then  multiplying $g$ on  arbitrary $r$ and  taking full expectation from both sides we get
\begin{eqnarray}\EE_{\xi, \mathbf{e}} [\langle g(x,\xi, \mathbf{e}), r \rangle] &=& \frac{d}{2\tau}\EE_{\xi, \mathbf{e}} [\langle (f(x + \tau \mathbf{e}, \xi) - f(x - \tau \mathbf{e}, \xi))\mathbf{e}, r \rangle]  \notag \\
&+& \frac{d}{2\tau}\EE_{\xi, \mathbf{e}} [\langle (\delta(x + \tau \mathbf{e})  - \delta(x - \tau \mathbf{e}))\mathbf{e}, r \rangle].\notag \end{eqnarray}
In the first term we use fact that $\mathbf{e}$ symmetrically distributed
\begin{eqnarray}
    &&\frac{d}{2\tau}\EE_{\xi, \mathbf{e}} [\langle (f(x + \tau \mathbf{e}, \xi) - f(x - \tau \mathbf{e}, \xi))\mathbf{e}, r \rangle]  
    = \frac{d}{\tau}\EE_{\xi, \mathbf{e}} [\langle f(x + \tau \mathbf{e}, \xi) \mathbf{e}, r \rangle]\notag \\
    &&=\frac{d}{\tau}\EE_{ \mathbf{e}} [\langle \EE_{\xi}[f(x + \tau \mathbf{e}, \xi)] \mathbf{e}, r \rangle] = \frac{d}{\tau} \langle \EE_{ \mathbf{e}} [f(x + \tau \mathbf{e}) \mathbf{e}], r \rangle. \label{eq: lem inner prod proof eq 1}
\end{eqnarray}
Using Lemma \ref{lem: hat_f properties intro} in \eqref{eq: lem inner prod proof eq 1} we take expectation
$$ \frac{d}{\tau} \langle \EE_{ \mathbf{e}} [f(x + \tau \mathbf{e}) \mathbf{e}], r \rangle = \langle \nabla \hat{f}_\tau(x), r \rangle.$$
In the second term we use Assumption \ref{as: noize}
$$\frac{d}{2\tau}\EE_{\xi, \mathbf{e}} [\langle (\delta(x + \tau \mathbf{e})  - \delta(x - \tau \mathbf{e}))\mathbf{e}, r \rangle] \geq - \frac{d\Delta}{\tau} \EE_\mathbf{e}[|\langle \mathbf{e}, r \rangle|].$$
Adding two terms together we get necessary result.

\end{proof}

\begin{proof}[Proof of Lemma \ref{lem: grad norm 1 +k intro}]
By definition \eqref{g} of estimated gradient $g$ we obtain next chain of inequalities
\begin{eqnarray}
&&\EE_{\xi, \mathbf{e}}[\|g(x,\xi,\mathbf{e})\|_q^{1+\kappa}] = \EE_{\xi, \mathbf{e}}\left[\left|\left|\frac{d}{2\tau}(\phi(x + \tau \mathbf{e}, \xi) - \phi(x - \tau \mathbf{e}, \xi)) \mathbf{e}\right|\right|_q^{1+\kappa}\right]  \notag\\
&=& \left(\frac{d}{2\tau} \right)^{1+\kappa}\EE_{\xi, \mathbf{e}}\left[\|\mathbf{e}\|_q^{1+\kappa}|(f(x + \tau \mathbf{e}, \xi) - f(x - \tau \mathbf{e}, \xi) + \delta(x + \tau \mathbf{e}) - \delta(x - \tau \mathbf{e})) |^{1+\kappa}\right]  \notag\\
&\overset{\text{\eqref{inq}}}{\leq}& 2^\kappa\left(\frac{d}{2\tau} \right)^{1+\kappa}\EE_{\xi, \mathbf{e}}\left[\|\mathbf{e}\|_q^{1+\kappa}|f(x + \tau \mathbf{e}, \xi) - f(x - \tau \mathbf{e}, \xi)|^{1+\kappa}\right] \label{eq: grad 1 +k norm first term}\\
&+& 2^\kappa\left(\frac{d}{2\tau} \right)^{1+\kappa} \EE_{\xi, \mathbf{e}}\left[\|\mathbf{e}\|_q^{1+\kappa}|\delta(x + \tau \mathbf{e}) - \delta(x - \tau \mathbf{e}) |^{1+\kappa}\right].\label{eq: grad 1 +k norm second term} 
\end{eqnarray}
Lets deal with \eqref{eq: grad 1 +k norm first term} term. Adding  $\pm\alpha(\xi)$ for any $\alpha(\xi)$ in \eqref{eq: grad 1 +k norm first term} we get 

\begin{align}
&\EE_{\xi, \mathbf{e}}\left[\|\mathbf{e}\|_q^{1+\kappa}|f(x + \tau \mathbf{e}, \xi) - f(x - \tau \mathbf{e}, \xi)|^{1+\kappa}\right] \notag \\
&\leq \EE_{\xi, \mathbf{e}}\left[\|\mathbf{e}\|_q^{1+\kappa}|(f(x + \tau \mathbf{e}, \xi) - \alpha) - (f(x - \tau \mathbf{e}, \xi) - \alpha)|^{1+\kappa}\right]\notag \\
&\overset{\eqref{inq}}{\leq} 2^\kappa\EE_{\xi, \mathbf{e}}\left[\|\mathbf{e}\|_q^{1+\kappa}|f(x + \tau \mathbf{e}, \xi) - \alpha|^{ 1+\kappa} \right] + 2^\kappa\EE_{\xi, \mathbf{e}}\left[\|\mathbf{e}\|_q^{1+\kappa}|f(x - \tau \mathbf{e}, \xi) - \alpha|^{ 1+\kappa} \right]. \label{eq: grad 1 +k technical 1}
\end{align}
We consider that distribution of $\mathbf{e}$ is symmetric, 
\begin{equation}\label{eq: grad 1 +k norm technical 2}
    \text{\eqref{eq: grad 1 +k technical 1}} \leq 2^{\kappa+1}\EE_{\xi, \mathbf{e}}\left[\|\mathbf{e}\|_q^{1+\kappa}|f(x + \tau \mathbf{e}, \xi) - \alpha|^{ 1+\kappa} \right]  . 
\end{equation}
Let $\alpha(\xi)  = \EE_\mathbf{e}[f(x + \tau\mathbf{e}, \xi)]$, then because of Cauchy-Schwartz inequality and conditional expectation properties,
\begin{eqnarray}\text{\eqref{eq: grad 1 +k norm technical 2}}&\leq& 2^{\kappa+1}\EE_{\xi, \mathbf{e}}\left[\|\mathbf{e}\|_q^{1+\kappa}|f(x + \tau \mathbf{e}, \xi) - \alpha|^{ 1+\kappa} \right]  \notag \\
 &=&  2^{\kappa+1}\EE_{\xi} \left[ \EE_\mathbf{e}\left[\|\mathbf{e}\|_q^{1+\kappa}|f(x + \tau \mathbf{e}, \xi) - \alpha|^{ 1+\kappa}\right]\right] \notag \\
&\leq& 2^{\kappa+1}\EE_{\xi}\left[\sqrt{\EE_{\mathbf{e}}\left[\|\mathbf{e}\|_q^{2(1+\kappa)} \right] \EE_\mathbf{e}\left[|f(x + \tau \mathbf{e}, \xi) - \EE_\mathbf{e}[f(x + \tau\mathbf{e}, \xi)]|^{ 2(1+\kappa)} \right]} \right]. \label{eq: grad 1 +k norm techinical 3}
\end{eqnarray}
Next, we use $\EE_\mathbf{e} \left[\|\mathbf{e}\|_q^{2(1+\kappa)} \right] \leq a_{q}^{2(1+\kappa)}$ and Lemma \ref{3L} for $f(x + \tau\mathbf{e}, \xi)$ with fixed $\xi$ and Lipschitz constant $M_2(\xi)\tau$,
\begin{eqnarray} \text{\eqref{eq: grad 1 +k norm techinical 3}} &\leq& 2^{\kappa+1} a_{q}^{1+\kappa} \EE_{\xi}\left[\sqrt{ \left(\frac{2^{-1/2}\tau^2M_2^2(\xi)}{d}\right)^{1 + \kappa}}\right] \notag \\
&=& 2^{\kappa+1} a_{q}^{1+\kappa} \left(\frac{\tau^2 2^{-1/2}}{d}\right)^{(1+\kappa)/2}\EE_{\xi}\left[ M_2^{1+\kappa}(\xi)\right] \notag \\
&\leq& 2^{\kappa+1}\left(\sqrt{\frac{2^{-1/2}}{d}} a_{q}M_2\tau\right)^{1+\kappa}.\label{eq: grad 1+k norm final first term}
\end{eqnarray}

Lets deal with \eqref{eq: grad 1 +k norm second term} term. We use the Cauchy-Schwartz inequality, bounded noise Assumption \ref{as: noize}  and inequality $\EE_\mathbf{e} \left[\|\mathbf{e}\|_q^{2(1+\kappa)}\right]  \leq a_{q}^{2(1+\kappa)}$ that follows from the definition of $a_q$

\begin{align}&\EE_{\xi, \mathbf{e}}\left[\|\mathbf{e}\|_q^{1+\kappa}|\delta(x + \tau \mathbf{e}) - \delta(x - \tau \mathbf{e}) |^{1+\kappa}\right] \notag \\
&\leq \sqrt{\EE_\mathbf{e} \left[\|\mathbf{e}\|_q^{2(1+\kappa)}\right]  \EE_\mathbf{e} \left[|\delta(x + \tau \mathbf{e}) - \delta(x - \tau \mathbf{e}) |^{2(1+\kappa)}\right]} \notag \\
&\leq a_{q}^{1+\kappa} 2^{1+\kappa}\Delta^{1+\kappa} = (2a_{q}\Delta)^{1+\kappa}.\label{eq: grad 1+k norm final second term}
\end{align}
Adding\eqref{eq: grad 1+k norm final first term} and \eqref{eq: grad 1+k norm final second term} we get final result
$$\EE_{\xi, \mathbf{e}}[\|g(x,\xi,\mathbf{e})\|_q^{1+\kappa}] \leq \frac12\left(\frac{d}{\tau} \right)^{1+\kappa} \left(2^{1+\kappa}\left(\sqrt{\frac{2^{-1/2}}{d}} a_{q}\tau M_2\right)^{1+\kappa} + (2a_{q}\Delta)^{1+\kappa} \right) = $$
$$= 2^{\kappa}\left(\frac{\sqrt{d}}{2^{1/4}} a_{q}M_2\right)^{1+\kappa} + 2^{\kappa}\left(\frac{da_{q}\Delta}{\tau}\right)^{1+\kappa}.  $$

\end{proof}

\section{Proof of \texttt{ZO-RSMD} in Expectation Convergence} \label{sec: proof robust conv}

\begin{proof}[Proof of Theorem \ref{Robust Conv}]
By definition $x_* \in \arg\min\limits_{x \in \mathcal{X}} f(x)$.

We use  Convergence Theorem \ref{MD convergence} for Robust SMD Algorithm and set of update vectors $g_k(x_k, \xi_k, \mathbf{e}_k)$
\begin{equation}\label{eq: robust conv SMD conv}
    \frac1T \sum \limits_{k=0}^{T-1} \langle g_{k+1}, x_k  -x^* \rangle \leq \frac{\kappa}{\kappa +1} \frac{R_{0}^{\frac{1+\kappa}{\kappa}} }{\nu T} + \frac{\nu^{\kappa}}{1+\kappa} \frac{1}{T} \sum \limits_{k=0}^{T-1} \|g_{k+1}\|^{1+\kappa}_q.
\end{equation}
Then we take  full expectation $\EE$ from both sides of \eqref{eq: robust conv SMD conv}
\begin{equation} \label{eq: conv robust start inq}  
\frac1T \sum \limits_{k=0}^{T-1} \EE\left[  \langle g_{k+1}, x_k  -x^* \rangle\right] \leq \frac{\kappa}{\kappa +1} \frac{R_{0}^{\frac{1+\kappa}{\kappa}} }{\nu T} + \frac{\nu^{\kappa}}{1+\kappa}\frac{1}{T} \sum \limits_{k=0}^{T-1}\EE\left[\|g_{k+1}\|^{1+\kappa}_q \right].\end{equation}
Using boundness of estimated gradient $(1+\kappa)$-th moment from Lemma \ref{lem: grad norm 1 +k intro} for the right part of inequality \eqref{eq: conv robust start inq} we get
\begin{equation}\label{eq: conv robust part 1}
    \frac{\nu^{\kappa}}{1+\kappa}\frac{1}{T} \sum \limits_{k=0}^{T-1}\EE\left[\|g_{k+1}\|^{1+\kappa}_q \right] \leq \frac{\nu^{\kappa}}{1+\kappa}\frac{1}{T} \sum \limits_{k=0}^{T-1} \sigma_{q}^{1+\kappa} \leq \frac{\nu^{\kappa}}{1+\kappa}\sigma_{q}^{1+\kappa}. \end{equation}
Using conditional math expectation and Lemma \ref{inner product grad r} for the left part of inequality \eqref{eq: conv robust start inq} we estimate

\begin{align}\label{200}
&\frac1T \sum \limits_{k=0}^{T-1} \EE\left[  \langle g_{k+1}, x_k  -x^* \rangle\right] = \frac1T \sum \limits_{k=0}^{T-1} \EE\left[  \EE_{|\leq k} [\langle g_{k+1}, x_k  -x^* \rangle]\right]\notag \\
&\geq \frac1T \sum \limits_{k=0}^{T-1}\EE[\langle \nabla \hat{f}_\tau(x_{k}) , x_k  -x^* \rangle] - \frac1T \sum \limits_{k=0}^{T-1}\frac{d \Delta}{\tau}\EE\left[\EE_{\mathbf{e}_k|\leq k} [|\langle \mathbf{e}_k, x_k  -x^* \rangle|]\right].
\end{align}

\begin{enumerate}
    \item 

For the first term of \eqref{200} by convexity of $\hat{f}_\tau(x)$ we obtain
$$\frac1T \sum \limits_{k=0}^{T-1}\EE[\langle \nabla \hat{f}_\tau(x_{k}) , x_k  -x^* \rangle] \geq \frac1T \sum \limits_{k=0}^{T-1} \left( \EE[\hat{f}_\tau(x_k)] - \hat{f}_\tau(x_*)\right).$$
Then we define $\overline{x}_T  = \frac1T \sum \limits_{k=0}^{T-1} x_k$ and use Jensen's inequality
$$\frac1T \sum \limits_{k=0}^{T-1} \left( \EE[\hat{f}_\tau(x_k)] - \hat{f}_\tau(x_*)\right) \geq \EE[\hat{f}_\tau(\overline{x}_T)] - \hat{f}_\tau(x^*).$$
Finally, we apply  approximation property of $\hat{f}_\tau(x)$ from Lemma  \ref{lem: hat_f properties intro}
\begin{equation}\label{eq: conv robust part 2}
    \EE[\hat{f}_\tau(\overline{x}_T)] - \hat{f}_\tau(x^*) \geq \EE[f(\overline{x}_T)] - f(x^*) - 2M_2\tau.\end{equation}
\item For the second term of \eqref{200} we use concentration measure property  from Lemma~\ref{inner product estimate} and estimate
 \begin{eqnarray}
 - \frac{d \Delta}{T\tau} \sum \limits_{k=0}^{T-1}\EE_{\mathbf{e}_k|\leq k} [|\langle \mathbf{e}_k, x_k  -x^* \rangle|] 
\geq -\frac{d\Delta}{T\tau}\sum \limits_{k=0}^{T-1} \frac{1}{\sqrt{d}}\|x_k - x^*\|_2 \notag \\
\overset{p \leq 2}{\geq} -\frac{d\Delta}{T\tau}\sum \limits_{k=0}^{T-1} \frac{1}{\sqrt{d}} \|x_k - x^*\|_p .\label{eq: conv robust tech 1}
\end{eqnarray}
Let's notice that $\Psi_{p}$ is $\left( 1, \frac{1+\kappa}{\kappa}\right)$-uniformly convex function w.r.t. $p$ norm. Then by definition \eqref{def: uniform convex} we bound $\|x_k - x^*\|_{p}$
$$\|x_k - x^*\|_{p} \leq \left(\frac{1+\kappa}{\kappa} D_{\Psi_{p}}(x_k, x^*) \right)^\frac{\kappa}{1+\kappa} \leq \sup_{x,y \in \mathcal{X}} \left(\frac{1+\kappa}{\kappa} D_{\Psi_{q^*}}(x, y) \right)^\frac{\kappa}{1+\kappa} = \mathcal{D}_\Psi $$
Hence, after this bound we get
\begin{equation}\label{eq: conv robust part 3}
    \text{\eqref{eq: conv robust tech 1}} \geq -\frac{d\Delta}{T\tau}\sum \limits_{k=0}^{T-1} \frac{1}{\sqrt{d}} \|x_k - x^*\|_{p} \geq -\frac{\sqrt{d}\Delta}{\tau}\mathcal{D}_\Psi . \end{equation}
\end{enumerate}

Next, we combine \eqref{eq: conv robust part 1}, \eqref{eq: conv robust part 2}, \eqref{eq: conv robust part 3} together to obtain final estimate 

\begin{equation}\label{smd before nu}
\EE[f(\overline{x}_T)] - f(x^*) \leq 2M_2\tau + \frac{\sqrt{d}\Delta}{\tau}\mathcal{D}_\Psi+ \frac{R_{0}^{\frac{1+\kappa}{\kappa}} }{\nu T} + \frac{\nu^{\kappa}}{1+\kappa}\sigma_{q}^{1+\kappa}.
\end{equation}
Now we select good parameters of the Algorithm to lower right part of \eqref{smd before nu}.
By choosing optimal $\nu = \frac{R_{0}^{1/\kappa}}{\sigma_{q} }T^{-\frac{1}{1+\kappa}}$ we get
$$\EE[f(\overline{x}_T)] - f(x^*) \leq 2M_2\tau + \frac{\sqrt{d}\Delta}{\tau}\mathcal{D}_\Psi  + 2R_{0}\sigma_{q} T^{-\frac{\kappa}{1+\kappa}}.$$
Finally, we get explicit bound of $\sigma_q$ using Lemma \ref{lem: Jensen for norm}
$$\sigma_q \leq 2\left(\frac{\sqrt{d}}{2^{1/4}} a_{q}M_2\right) + 2\left(\frac{da_{q}\Delta}{\tau}\right).$$
And set optimal $\tau$ 
$$\tau =  \sqrt{\frac{\sqrt{d}\Delta\mathcal{D}_{\Psi} + 4R_{0}da_{q}\Delta T^{-\frac{\kappa}{1+\kappa}}}{2M_2}}.$$

\end{proof}

\section{Proof of \texttt{ZO-Clip-SMD} in Expectation Convergence}\label{sec: clip conv}
First, we prove some useful statements about clipped gradient vector properties. Similar proof can be found in \cite{zhang2022parameter}.

    
    
\begin{proof}[Proof of Lemma \ref{clip_grad_properties intro}]

\begin{enumerate}
\item 

By Jensen's inequality for $\|\cdot\|_q$ and definition of $\hat{g}$ we estimate
\begin{eqnarray}
\|\hat{g} - \EE[\hat{g} ]\|_q  &\leq& \|\hat{g}\|_q +  \|\EE[\hat{g} ]\|_q \notag \\
&\leq& \left|\left|\frac{g}{\|g\|_q} \min(\|g\|_q, c)\right|\right|_q +  \EE\left[\left|\left|\frac{g}{\|g\|_q} \min(\|g\|_q, c) \right|\right|_q\right] \notag \\
&=& \min(\|g\|_q, c) + \EE[\min(\|g\|_q, c)] \notag \\
&\leq& c + c  = 2c. \notag
\end{eqnarray}

\item
\begin{enumerate}
    \item 
    Considering $\EE[\|g(x,\xi,\mathbf{e})\|_q^{1+\kappa}]\leq  \sigma_{q}^{1+\kappa}$ and $\|\hat{g}\|_q \leq c$ we get
    $$\EE[\|\hat{g}\|^{1+\kappa}_q \|\hat{g}\|^{1-\kappa}_q]\leq \sigma_{q}^{1+\kappa} c^{1-\kappa}.$$

    \item By Jensen's inequality for $\|\cdot\|_q$ we obtain
    \begin{eqnarray}
   \EE[\|\hat{g} - \EE[\hat{g}]\|^2_q] &\leq& 2\EE[\|\hat{g}\|_q^2 + 2\|\EE[\hat{g}]\|^2_q ] \notag \\
  & \leq& 2\EE[\|\hat{g}\|_q^2] + 2\EE[\|\hat{g}\|^2_q] ] \notag \\
    &\overset{\eqref{clip grad  second moment}}{\leq}& 2\sigma_{q,\kappa}^{1+\kappa} c^{1-\kappa} + 2\sigma_{q,\kappa}^{1+\kappa} c^{1-\kappa} \leq 4\sigma_{q,\kappa}^{1+\kappa} c^{1-\kappa}.\notag
    \end{eqnarray}
\item

    Due to convexity of norm function and Jensen's inequality we estimate
    $$\|\EE[g] - \EE [\hat{g}]\|_q \leq \EE[\|g - \hat{g}\|_q] \leq \EE\left[\|g\|_q \mathds{1}_{\{\|g\|_q > c \}}\right].$$
    Final result follows from $\|g\|^{1+\kappa}_q \mathds{1}_{\{\|g\|_q > c \}} \geq \|g\|_q c^{\kappa} \mathds{1}_{\{\|g\|_q > c \}}$ 
    \begin{equation}\label{bayes add}\EE\left[\|g\|_q \mathds{1}_{\{\|g\|_q > c \}}\right] \leq\EE\left[\|g\|_q \mathds{1}_{\{\|g\|_q > c \}}\right] \leq \frac{\sigma_{q,\kappa}^{1+\kappa}}{c^\kappa}.\notag \end{equation}

\end{enumerate}
\end{enumerate}

\end{proof}

\begin{proof}[Proof of the Theorem \ref{Clip Conv}]
Let us note from first term of \eqref{200} in the proof of  Theorem \ref{Robust Conv}   that for any $x_k$
\begin{equation}\label{loss}
     f(\overline{x}_T) - f(x^*)  \leq \frac1T \sum \limits_{k=0}^{T-1}\langle \nabla \hat{f}_\tau(x_{k}) , x_k  -x^* \rangle +  2M_2\tau. \end{equation}
Next, we  define  functions 
$$l_k(x) \eqdef \langle \EE_{|\leq k}[\hat{g}_{k+1}], x  - x^* \rangle. $$
Note that $l_k(x)$ is convex for any $k$ and $\nabla l_k(x) = \EE_{|\leq k}[\hat{g}_{k+1}]$. Therefore sampled estimation gradient is unbiased.
With these functions we can rewrite the right part of \eqref{loss} as follows
\begin{align}\label{D and E terms in clip no exp}
&\frac1T \sum \limits_{k=0}^{T-1}\langle \nabla \hat{f}_\tau(x_{k}) , x_k  -x^* \rangle +  2M_2\tau \notag \\
    &=  \frac1T \sum \limits_{k=0}^{T-1}\left(\langle \nabla \hat{f}_\tau(x_{k}) - \EE_{|\leq k}[\hat{g}_{k+1}] , x_k  -x^* \rangle   \right) +  \frac1T \sum \limits_{k=0}^{T-1}\left( l_k(x_k) - l_k(x^*)\right) + 2M_2\tau.\end{align}
Then we take full expectation from both sides of \eqref{D and E terms in clip no exp}

\begin{eqnarray}\label{D and E terms in clip}
&\EE & \left[\frac1T \sum \limits_{k=0}^{T-1}\langle \nabla \hat{f}_\tau(x_{k}) , x_k  -x^* \rangle \right]+  2M_2\tau \notag \\
    &=& \underbrace{ \EE \left[\frac1T \sum \limits_{k=0}^{T-1}\left(\langle \nabla \hat{f}_\tau(x_{k}) - \EE_{|\leq k}[\hat{g}_{k+1}] , x_k  -x^* \rangle   \right) \right]}_\text{D} \notag \\
    &+&  \underbrace{\EE\left[\frac1T \sum \limits_{k=0}^{T-1}\left( l_k(x_k) - l_k(x^*)\right)\right]}_\text{E} + 2M_2\tau. \notag \end{eqnarray}
We add $\pm \EE_{|\leq k}[g_{k+1}]$ in D  term and get
\begin{align}\label{eq: conv clip D term sum}
&\EE\left[\frac1T \sum \limits_{k=0}^{T-1}\left(\langle \nabla \hat{f}_\tau(x_{k}) - \EE_{|\leq k}[\hat{g}_{k+1}] , x_k  -x^* \rangle   \right)\right]\notag \\
    &= \EE\left[\frac1T \sum \limits_{k=0}^{T-1} \langle  \EE_{|\leq k}[g_{k+1}] - \EE_{|\leq k}[\hat{g}_{k+1}] , x_k  -x^* \rangle  \right] \notag\\
    &+ \EE\left[\frac1T \sum \limits_{k=0}^{T-1}\langle \nabla \hat{f}_\tau(x_{k}) - \EE_{|\leq k}[g_{k+1}] , x_k  -x^* \rangle \right].  \end{align}
In order to bound the first term of \eqref{eq: conv clip D term sum} let's notice that $\Psi_{p}$ is $\left( 1, 2\right)$-uniformly convex function w.r.t. $p$ norm. Then by definition \eqref{def: uniform convex} we bound $\|x_k - x^*\|_{p}$

$$\|x_k - x^*\|_{p} \leq \left(2 D_{\Psi_{p}}(x_k, x^*) \right)^\frac12 \leq \sup_{x,y \in \mathcal{X}} \left(2 D_{\Psi_{p}}(x, y) \right)^\frac12 = \mathcal{D}_\Psi, $$
and estimate $\|x_k - u\|_{p} \leq \mathcal{D}_\Psi,$  for all $  u \in\mathcal{X}.$

We apply the Cauchy–Schwarz inequality to inner product in the first term of \eqref{eq: conv clip D term sum}
\begin{align} \label{eq: conv clip part 1}
  &\EE\left[\frac1T \sum \limits_{k=0}^{T-1}\left(\langle  \EE_{|\leq k}[g_{k+1}] - \EE_{|\leq k}[\hat{g}_{k+1}] , x_k  -x^* \rangle   \right)\right]\notag \\
  &\leq  \frac1T \sum_{k=0}^{T-1} \left( \EE\left[\EE_{|\leq k}\left[\|\EE_{|\leq k}[g_{k+1}] - \EE_{|\leq k}[\hat{g}_{k+1}]\|_q\|x_k- x^*\|_{p}\right] \right] \right)   \overset{\eqref{clip grad nabla f tau diff}}{\leq} \mathcal{D}_\Psi\frac{\sigma_{q}^{1+\kappa}}{c^\kappa}.\end{align}
To bound the second term in \eqref{eq: conv clip D term sum} we use Lemma \ref{inner product grad r} and Lemma \ref{inner product estimate}
\begin{align}\label{eq: conv clip part 2}
    &\EE\left[\frac1T \sum \limits_{k=0}^{T-1}\left(\langle \nabla \hat{f}_\tau(x_{k}) - \EE_{|\leq k}[g_{k+1}] , x_k  -x^* \rangle   \right)\right] \notag \\
    &\leq \frac1T \sum \limits_{k=0}^{T-1} \frac{d \Delta}{\tau}\EE\left[\EE_{\mathbf{e}|<k}[|\langle \mathbf{e},x_k - x^*\rangle|]\right] \notag \\
    &\leq \frac1T \sum \limits_{k=0}^{T-1} \frac{d \Delta}{\tau} \frac{1}{\sqrt{d}}\EE[\|x_k - x^*\|_2] \notag \\
    & \overset{p\leq 2}{\leq} \frac1T \sum \limits_{k=0}^{T-1} \frac{d \Delta}{\tau} \frac{1}{\sqrt{d}}\EE[\|x_k - x^*\|_p] \leq \frac{\Delta \sqrt{d}}{\tau} \mathcal{D}_\Psi.
\end{align}

Next, we bound E term. First of all, we rewrite it as

\begin{align*}
    \EE \left[\frac1T \sum \limits_{k=0}^{T-1}\left( l_k(x_k) - l_k(x^*)\right) \right] &= \frac1T \sum \limits_{k=0}^{T-1} \EE \left[\EE_{|\leq k}[\langle \EE_{|\leq k}[\hat{g}_{k+1}], x_k  -x^* \rangle] \right] \\
    &= \frac1T \sum \limits_{k=0}^{T-1} \EE \left[\EE_{|\leq k}[\langle \hat{g}_{k+1}, x_k  -x^* \rangle]\right].
\end{align*}

For the Robust SMD Algorithm with update vectors $\hat{g_k}  $ by Convergence Theorem \ref{MD convergence} with bounded second moment next inequality holds true
\begin{equation}\label{conv from E clip proof}
    \frac1T \sum \limits_{k=0}^{T-1} \langle \hat{g}_{k+1}, x_k  -x^* \rangle \leq \frac12 \frac{R_{0}^{2} }{\nu T} + \frac{\nu}{2} \frac{1}{T} \sum \limits_{k=0}^{T-1} \|\hat{g}_{k+1}\|^{2}_q.
    \end{equation}
Taking $\EE $ from both sides of \eqref{conv from E clip proof} we get
\begin{eqnarray} 
\frac1T \sum \limits_{k=0}^{T-1} \EE [ \langle \hat{g}_{k+1}, x_k  -x^* \rangle] &=& \frac1T \sum \limits_{k=0}^{T-1} \EE \left[ \EE_{|\leq k}[\langle \hat{g}_{k+1}, x_k  -x^* \rangle]\right] \notag \\
&\leq& \frac12 \frac{R_{0}^{2} }{\nu T} + \frac{\nu}{2} \frac{1}{T} \sum \limits_{k=0}^{T-1} \EE\left[\EE_{|\leq k}[\|\hat{g}_{k+1}\|^{2}_q]\right]. \notag
\end{eqnarray}
By \eqref{clip grad  second moment} from Lemma \ref{clip_grad_properties intro} we bound second moment of clipped gradient 

$$\EE_{|\leq k}(\|\hat{g}_{k+1}\|^2_q)  \leq \sigma_{q}^{1+\kappa} c^{1 - \kappa}, $$
And hence get,
\begin{equation}\label{eq: conv clip part 3}
    \frac1T \sum \limits_{k=0}^{T-1} \EE [ \langle \hat{g}_{k+1}, x_k  -x^* \rangle] \leq \frac12 \frac{R_{0}^{2} }{\nu T} + \frac{\nu}{2}  \sigma_{q}^{1+\kappa} c^{1 - \kappa}. \end{equation} 
Combining bounds \eqref{eq: conv clip part 1}, \eqref{eq: conv clip part 2}, \eqref{eq: conv clip part 3} together, we obtain final estimate
$$\EE [f(\overline{x}_T)] - f(x^*)  \leq 2M_2\tau + \frac12 \frac{R_{0}^{2} }{\nu T} + \frac{\nu}{2}  \sigma_{q}^{1+\kappa} c^{1 - \kappa} + \left(\frac{\sigma_{q}^{1+\kappa}}{c^\kappa} + \Delta \frac{\sqrt{d}}{\tau}\right) \mathcal{D}_\Psi. $$
In order to get minimal upper bound we find optimal parameters. First, we choose $c$ by finding minimum of 
$$  \min_{c>0}\sigma_{q}^{1+\kappa} \left( \frac{1}{c^\kappa}\mathcal{D}_\Psi + \frac{\nu}{2}   c^{1 - \kappa} \right) = \min_c \sigma_{q}^{1+\kappa} h_1(c)$$

$h_1'(c) = \frac{\nu}{2} (1 - \kappa) c^{-\kappa} - \kappa\frac{1}{c^{1+\kappa}}\mathcal{D}_\Psi = 0 \Rightarrow c^* =  \frac{2\kappa\mathcal{D}_\Psi}{(1-\kappa)\nu}.$

\begin{eqnarray}
    \EE [f(\overline{x}_T)] - f(x^*)  &\leq& 2M_2\tau + \frac12 \frac{R_{0}^{2} }{\nu T} + \Delta \frac{\sqrt{d}}{\tau} \mathcal{D}_\Psi \notag \\
    &+& \sigma_{q,\kappa}^{1+\kappa} \left( \mathcal{D}^{1-\kappa} 2^{-\kappa} \nu^{\kappa} \left[ \frac{(1-\kappa)^\kappa}{\kappa^\kappa} + \frac{\kappa^{(1-\kappa)}}{(1-\kappa)^{(1-\kappa)}}\right]\right). \label{eq: for simplify}
\end{eqnarray}
Considering bound of $\kappa \in [0,1]$ and as consequence
$$\left[ \frac{(1-\kappa)^\kappa}{\kappa^\kappa} + \frac{\kappa^{(1-\kappa)}}{(1-\kappa)^{(1-\kappa)}}\right] \leq 2, $$
we simplify \eqref{eq: for simplify}
\begin{equation}\label{clip before nu}
    \EE [f(\overline{x}_T)] - f(x^*)  \leq 2M_2\tau + \frac12 \frac{R_{0}^{2} }{\nu T} + \Delta \frac{\sqrt{d}}{\tau} \mathcal{D}_\Psi + \sigma_{q}^{1+\kappa} \left( 2\mathcal{D}_\Psi^{1-\kappa}  \nu^{\kappa} \right).\end{equation}
Choosing optimal $\nu^*$ similarly we get
$$\nu^* = \left( \frac{R_{0}^2}{4T\kappa\sigma_{q}^{1+\kappa} \mathcal{D}_\Psi^{1-\kappa} }\right)^{\frac{1}{1+\kappa}}$$
And
$$\EE [f(\overline{x}_T)] - f(x^*)  \leq 2M_2\tau + \Delta \frac{\sqrt{d}}{\tau} \mathcal{D}_\Psi+ \frac{R_{0}^\frac{2\kappa}{1+\kappa} \mathcal{D}_\Psi^{\frac{1-\kappa}{1+\kappa}}\sigma_{q} }{T^\frac{\kappa}{1+\kappa}} 2\left[\kappa^\frac{1}{1+\kappa} + \kappa^{-\frac{\kappa}{1+\kappa}} \right].$$
Considering bound of $\kappa \in [0,1]$ next inequality holds true
$$\left[\kappa^\frac{1}{1+\kappa} + \kappa^{-\frac{\kappa}{1+\kappa}} \right] \leq 2.$$
Thus, we can  simplify upper bound even more
\begin{equation}\label{after nu}
    \EE [f(\overline{x}_T)] - f(x^*)  \leq 2M_2\tau + \Delta \frac{\sqrt{d}}{\tau} \mathcal{D}_\Psi+2 \frac{R_{0}^\frac{2\kappa}{1+\kappa} \mathcal{D}_\Psi^{\frac{1-\kappa}{1+\kappa}}\sigma_{q} }{T^\frac{\kappa}{1+\kappa}}. \end{equation}
In order to avoid $\nu \rightarrow \infty$ when $\kappa \rightarrow 0$ one can also choose $\nu^* = \left( \frac{R_{0}^2}{4T\sigma_{q}^{1+\kappa} \mathcal{D}_\Psi^{1-\kappa} }\right)^{\frac{1}{1+\kappa}}$. Estimation \eqref{after nu} does not change.

Finally, we get explicit bound $\sigma_q$ with Lemma \ref{lem: Jensen for norm}
$$\sigma_q \leq 2\left(\frac{\sqrt{d}}{2^{1/4}} a_{q}M_2\right) + 2\left(\frac{da_{q}\Delta}{\tau}\right),$$
And set optimal $\tau$
$$\tau = \sqrt{\frac{\sqrt{d}\Delta\mathcal{D}_{\Psi} + 4R_{0}^\frac{2\kappa}{1+\kappa} \mathcal{D}_\Psi^{\frac{1-\kappa}{1+\kappa}}da_{q}\Delta T^{-\frac{\kappa}{1+\kappa}}}{2M_2}}.$$

\end{proof}
\section{Proof of \texttt{ZO-Clip-SMD} in High Probability Convergence} \label{sec: clip conv prob}

Before we turn to the proof of Theorem \ref{Clip Conv prob}, we present two lemmas representing the Bernstein inequality for the sum of martingale differences and the sum of squares of bounded random variables.
These are some classical results on  measure concentration. 
\begin{lemma}[Lemma $23$ from \cite{zhang2022parameter}]\label{Bernstein inequality}
    Let $\{X_i\}_{i \geq 1}$ be martingale difference sequence, i.e. $\EE[X_i| X_{i-1}, \dots, X_1] = 0$ for all $i \geq 1$. Also $b, \sigma$ is such deterministic constants that $|X_i| < b$ almost surely and $  \EE[X_i^2| X_{i-1}, \dots, X_1] < \sigma^2$  for $ i \geq 1$. Then for arbitrary fixed number $\mu$ and for all $T$ with probability at least $1 - \delta$ next inequality holds true
    $$\left|\sum\limits_{i=1}^t \mu X_i \right| \leq 2b|\mu|\log\frac1\delta + \sigma|\mu|\sqrt{2T\log\frac1\delta}.$$
\end{lemma}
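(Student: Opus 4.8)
The plan is to prove this two-sided deviation bound by the classical exponential-supermartingale (Chernoff) method, reducing everything to a single one-sided Bernstein tail estimate. First I would factor out $\mu$: since $\sum_{i=1}^{T}\mu X_i=\mu\sum_{i=1}^{T}X_i$, it suffices to control $S_T\eqdef\sum_{i=1}^{T}X_i$, after which the bound on $|{\textstyle\sum}\mu X_i|=|\mu|\,|S_T|$ is immediate. The two-sided estimate on $|S_T|$ follows from a one-sided estimate applied simultaneously to $\{X_i\}$ and to $\{-X_i\}$ — the latter is again a martingale difference sequence with the identical bounds $|-X_i|<b$ and $\EE[(-X_i)^2\mid\cF_{i-1}]<\sigma^2$, where $\cF_{i-1}$ abbreviates conditioning on $X_{i-1},\dots,X_1$ — together with a union bound; the extra factor $2$ is harmlessly absorbed into the looseness of the stated constants (formally by replacing $\delta$ with $\delta/2$).

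For the one-sided estimate I would fix $\lambda\in(0,3/b)$ and invoke the elementary inequality $e^{u}-1-u\le \tfrac{u^{2}/2}{1-|u|/3}$, valid for $|u|<3$. Applying it conditionally with $u=\lambda X_i$ and using $\EE[X_i\mid\cF_{i-1}]=0$ together with $\EE[X_i^{2}\mid\cF_{i-1}]<\sigma^{2}$ yields $\EE[e^{\lambda X_i}\mid\cF_{i-1}]\le\exp\!\big(\tfrac{\lambda^{2}\sigma^{2}/2}{1-\lambda b/3}\big)$. Consequently the process $M_t\eqdef\exp\!\big(\lambda S_t-t\,\tfrac{\lambda^{2}\sigma^{2}/2}{1-\lambda b/3}\big)$ is a supermartingale with $\EE[M_T]\le 1$, and Markov's inequality gives the Chernoff bound $\Prob(S_T\ge x)\le\exp\!\big(-\lambda x+T\,\tfrac{\lambda^{2}\sigma^{2}/2}{1-\lambda b/3}\big)$ for every admissible $\lambda$.

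Next I would optimize over $\lambda$. Choosing the standard Bernstein value $\lambda=x/(T\sigma^{2}+bx/3)$ collapses the exponent and produces $\Prob(S_T\ge x)\le\exp\!\big(-\tfrac{x^{2}}{2(T\sigma^{2}+bx/3)}\big)$. The final step is to invert this tail: setting the right-hand side equal to $\delta$ leads to the quadratic $x^{2}-\tfrac{2b}{3}\log\tfrac1\delta\,x-2T\sigma^{2}\log\tfrac1\delta\ge 0$, whose positive root is at most $\tfrac{2b}{3}\log\tfrac1\delta+\sqrt{2T\sigma^{2}\log\tfrac1\delta}$ by the elementary bound $\sqrt{a^{2}+c}\le a+\sqrt{c}$. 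Since $\sqrt{2T\sigma^{2}\log\tfrac1\delta}=\sigma\sqrt{2T\log\tfrac1\delta}$ and $\tfrac{2b}{3}\le 2b$, this is at least as strong as the claimed $2b\,\log\tfrac1\delta+\sigma\sqrt{2T\log\tfrac1\delta}$, and restoring the factor $|\mu|$ recovers the stated inequality exactly.

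The main obstacle I anticipate is not the supermartingale construction, which is routine, but the clean inversion of the Bernstein tail into the additive two-term form with explicit constants: one must keep the two deviation sources (the linear $bx$ term and the variance term $T\sigma^{2}$) separated through the quadratic solve and verify that their sum dominates the root \emph{uniformly}, rather than only in the variance-dominated or the boundedness-dominated regime. A secondary technical point is maintaining $\lambda<3/b$ throughout the optimization so that the conditional exponential-moment bound stays valid; this restricts the admissible range of $x$ but, as the inversion shows, does not weaken the final conclusion.
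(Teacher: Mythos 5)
You should first be aware that the paper contains no proof of this statement at all: it is imported verbatim as Lemma 23 of \cite{zhang2022parameter} and used as a black box in the proof of Theorem \ref{Clip Conv prob}, so your attempt can only be judged on its own merits. The one-sided core of your argument is the standard Freedman--Bernstein proof and it checks out: the elementary bound $e^{u}\le 1+u+\frac{u^{2}/2}{1-|u|/3}$ for $|u|<3$, the supermartingale $M_t=\exp\bigl(\lambda S_t-t\,\frac{\lambda^{2}\sigma^{2}/2}{1-\lambda b/3}\bigr)$, Markov's inequality, the choice $\lambda=x/(T\sigma^{2}+bx/3)$ (which indeed satisfies $\lambda<3/b$ automatically, so the admissibility worry you raise resolves itself), and the quadratic inversion together give, with probability at least $1-\delta$, the one-sided estimate $\sum_{i=1}^{T}X_i\le \frac{2b}{3}\log\frac1\delta+\sigma\sqrt{2T\log\frac1\delta}$, which is stronger than the claimed constants.

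The genuine gap is in the step you dismiss as harmless: the passage to the two-sided bound. Applying the one-sided estimate to $\{X_i\}$ and $\{-X_i\}$ with confidence $\delta/2$ each yields $\bigl|\sum_{i=1}^{T}X_i\bigr|\le \frac{2b}{3}\log\frac2\delta+\sigma\sqrt{2T\log\frac2\delta}$, and this does \emph{not} imply the stated inequality in general. The only slack available to absorb the change from $\log\frac1\delta$ to $\log\frac2\delta$ sits in the additive term, namely $2b\log\frac1\delta-\frac{2b}{3}\log\frac2\delta$, which is independent of $T$; whereas the penalty incurred in the variance term, $\sigma\sqrt{2T}\,\bigl(\sqrt{\log\frac2\delta}-\sqrt{\log\frac1\delta}\bigr)$, grows like $\sqrt{T}$. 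Hence for every fixed $\delta$, $b$, $\sigma$ there is a $T_0$ such that for all $T\ge T_0$ (precisely the variance-dominated regime the paper cares about, since $T\to\infty$) the bound you actually proved is strictly weaker than the bound claimed. What your argument establishes is the lemma with $\log\frac2\delta$ in place of $\log\frac1\delta$, equivalently the stated bound with probability $1-2\delta$. Two remarks to put this in perspective: any proof relying solely on conditional exponential-moment bounds (including the symmetric $\cosh(\lambda S_t)$ supermartingale variant) incurs the same factor of $2$ in front of the tail probability, so the constants as transcribed in the paper should themselves be treated with suspicion; and for the paper's purposes the distinction is immaterial, since Theorem \ref{Clip Conv prob} hides polynomial factors of $\log\frac1\delta$ inside $\widetilde{O}(\cdot)$. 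But as a proof of the statement exactly as written, the two-sided step fails, and the justification you give for it is incorrect.
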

\begin{lemma}[Theorem $20$ from \cite{zhang2022parameter}] \label{Sum of squares MSD}
Let $Z_i$ be a sequence of random variables adapted to a filtration $\mathcal{F}_t$. Further, suppose $|Z_i| < b$ almost surely and $\EE[Z_i^2] \leq \sigma^2$ . Then for any $\mu > 0$ with probability at least $1 -\delta$ next inequality holds true
\begin{eqnarray}
    \sum\limits_{k=1}^T Z_k^2 &\leq& 3T\sigma^2 \log\left( \frac4\delta\left[ \log\left(\sqrt{\frac{\sigma^2T}{\mu^2}}\right)+2\right]^2\right)\notag \\
    &+&20\max(\mu^2, b^2)\log\left(\frac{112}{\delta}\left[ \log\left(\frac{2\max(\mu,b)}{\mu}\right) + 1\right]^2\right). \notag
    \end{eqnarray}
By choosing $\mu = b \geq \sigma$ we simplify
$$\sum\limits_{k=1}^T Z_k^2 \leq 3T\sigma^2 \log\left( \frac4\delta\left[ \log\left(\sqrt{T}\right)+2\right]^2\right)+20b^2\log\left(\frac{12}{\delta}\right).$$

\end{lemma}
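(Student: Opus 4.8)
The plan is to prove Lemma~\ref{Sum of squares MSD} by the standard martingale route for variance-adaptive (self-normalized) concentration, combined with a stitching/peeling device that makes a Bernstein-type bound uniform over the a priori unknown accumulated variance; the free parameter $\mu$ plays the role of the base resolution at which the peeling starts. Throughout I write $W_k = Z_k^2 \in [0,b^2]$ and $S_T = \sum_{k=1}^T W_k$, and I use the per-step \emph{conditional} second-moment bound $\EE[W_k\mid \mathcal{F}_{k-1}]\le \sigma^2$, consistent with the conditional hypothesis already used in Lemma~\ref{Bernstein inequality}.

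First I would split $S_T$ into a martingale part and its predictable compensator, $S_T = M_T + V_T$, where $M_T = \sum_{k=1}^T\bigl(W_k - \EE[W_k\mid \mathcal{F}_{k-1}]\bigr)$ and $V_T = \sum_{k=1}^T \EE[W_k\mid \mathcal{F}_{k-1}]$. Since $W_k\in[0,b^2]$, the increments of $M_t$ lie in $[-b^2,b^2]$, so $\{M_t\}$ is a bounded-increment martingale amenable to a Bernstein estimate in the spirit of Lemma~\ref{Bernstein inequality}; its predictable quadratic variation obeys $\sum_k \EE_{k-1}[W_k^2]\le b^2\sum_k \EE_{k-1}[W_k] = b^2 V_T$. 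The conditional moment bound moreover gives $V_T \le T\sigma^2$ deterministically, which is exactly the source of the leading $T\sigma^2$ term.

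The quadratic variation $b^2V_T$ is random and not known in advance, so a single Bernstein application does not suffice; this is the main obstacle. To resolve it I would run a stitching argument: partition the range of the variance process into geometric epochs $b^2 V_t \in [\mu^2\eta^{\,j}, \mu^2\eta^{\,j+1})$ starting at the base scale $\mu^2$ and ending near $b^2 T\sigma^2$, so that the number of epochs is of order $\log\sqrt{\sigma^2 T/\mu^2}$. On each epoch I apply the supermartingale (Ville/Freedman) form of the Bernstein bound and then union-bound over the $O\bigl(\log\sqrt{\sigma^2T/\mu^2}\bigr)$ epochs. Each of the two error contributions — the variance term $\sqrt{b^2 V_T\,\ell}$ and the increment term proportional to $\max(\mu^2,b^2)\,\ell$ — absorbs this epoch count, which is precisely what produces the nested bracketed factors $\bigl[\log(\sqrt{\sigma^2T/\mu^2})+2\bigr]^2$ and $\bigl[\log(2\max(\mu,b)/\mu)+1\bigr]^2$; the floor $\max(\mu,b)$ enters because the increment scale cannot be taken below $\max(\mu,b)$.

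Finally I would recombine. The stitched estimate has the shape $M_T \lesssim \sqrt{b^2 V_T\,\ell_1} + \max(\mu^2,b^2)\,\ell_2$ with $\ell_1,\ell_2$ the corrected logarithms above. Substituting $V_T\le T\sigma^2$, writing $S_T = M_T + V_T$, and splitting the cross term $\sqrt{b^2 T\sigma^2\,\ell_1}$ by AM--GM so as to absorb it into the variance contribution, collects the $T\sigma^2$ pieces into the stated coefficient on $3T\sigma^2\log(\cdots)$ and the boundedness pieces into $20\max(\mu^2,b^2)\log(\cdots)$. The simplified inequality then follows by choosing $\mu = b\ge \sigma$: this makes $\sqrt{\sigma^2 T/\mu^2}\le\sqrt{T}$, reduces $\max(\mu,b)=b$ and $\log(2\max(\mu,b)/\mu)=\log 2$ to constants, and collapses the numerical constant $112$ to $12$. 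The delicate point throughout is tracking constants through the epoch union bound so that the iterated-logarithm corrections emerge exactly as written; the remaining manipulations are routine.
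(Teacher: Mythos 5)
You should first note a structural point: the paper contains no proof of this lemma at all --- it is imported verbatim as Theorem 20 of \cite{zhang2022parameter} --- so there is no in-paper argument to compare against; your sketch is in effect a reconstruction of the cited source's stitched Freedman-type argument, and its architecture (write $\sum_k Z_k^2 = M_T + V_T$ with $V_T$ the predictable compensator, bound the quadratic variation of $M$ by $b^2 V_T$, peel geometrically from the base scale $\mu^2$, union-bound over epochs to generate the nested squared-log brackets) is the right one for a bound of this exact shape. Your decision to read the hypothesis as the \emph{conditional} bound $\EE[Z_k^2\mid \mathcal{F}_{k-1}]\le \sigma^2$ is not merely ``consistent'' with Lemma \ref{Bernstein inequality}; it is necessary. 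With only the unconditional bound the statement is false: take $Z_1=b$ with probability $\sigma^2/b^2$ and $0$ otherwise, and $Z_k=Z_1$ for all $k$; then $\EE[Z_k^2]=\sigma^2$ for every $k$, yet $\sum_{k=1}^T Z_k^2 = Tb^2$ with probability $\sigma^2/b^2$, which for $\delta<\sigma^2/b^2\ll 1$ and moderate $T$ exceeds the right-hand side. Moreover, once the hypothesis is conditional, $V_T\le T\sigma^2$ holds deterministically and the entire peeling machinery becomes dispensable for the inequality as stated: applying the paper's own Lemma \ref{Bernstein inequality} with $X_k = Z_k^2-\EE[Z_k^2\mid\mathcal{F}_{k-1}]$, $|X_k|\le b^2$, $\EE[X_k^2\mid\mathcal{F}_{k-1}]\le b^2\sigma^2$ and $\mu=1$ gives $M_T \le 2b^2\log\frac1\delta + b\sigma\sqrt{2T\log\frac1\delta}$, whence by AM--GM $\sum_k Z_k^2 \le \frac32 T\sigma^2 + 3b^2\log\frac1\delta$, which is \emph{stronger} than the displayed bound for every $\mu>0$ (since $3\log(\frac4\delta[\cdot]^2)\ge 3\ln 4$ and $20\log(\frac{112}{\delta}[\cdot]^2)\ge 3\log\frac1\delta$). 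The $\mu$-parameterized stitching earns its keep only in the source's more demanding self-normalized formulation, where the realized variance is not bounded a priori.

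One step of your sketch fails if taken literally: substituting $\mu=b\ge\sigma$ into the finished general bound does \emph{not} ``collapse the numerical constant $112$ to $12$''. With $\mu=b$ the second factor becomes $\log\bigl(\frac{112}{\delta}(\log 2+1)^2\bigr)$, which is strictly \emph{larger} than $\log\frac{12}{\delta}$, so the simplified display is not a consequence of the first one; it has to come from rerunning the argument with $\mu=b$, where the increment-scale peeling degenerates to a single epoch and the union-bound inflation responsible for the $112$ disappears (the same caveat applies to the paper's statement, which asserts the simplification without derivation). A smaller bookkeeping remark: your epochs are placed on $b^2V_t$ up to $b^2T\sigma^2$, which would make the epoch count scale like $\log(b^2T\sigma^2/\mu^2)$ and leak a spurious $\log b$ into the first bracket; to recover $\bigl[\log\sqrt{\sigma^2T/\mu^2}+2\bigr]^2$ exactly, the variance grid should run over $V_t\in[\mu^2, T\sigma^2]$, with the $b$-dependent epochs charged to the second, increment-scale bracket --- which is indeed where $\max(\mu,b)$ appears.
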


\begin{proof}[Proof of the Theorem \ref{Clip Conv prob}]
Lets notice from the first term of \eqref{200} in the proof of  Theorem \ref{Robust Conv}  that for any $x_k$ next inequality holds true

\begin{equation}\label{loss high prob}
     f(\overline{x}_T) - f(x^*)  \leq \frac1T \sum \limits_{k=0}^{T-1}\langle \nabla \hat{f}_\tau(x_{k}) , x_k  -x^* \rangle +  2M_2\tau. \end{equation}
For the Robust SMD Algorithm with update vectors $\hat{g_k}  $  Convergence Theorem \ref{MD convergence} with the bounded second moment guarantees that
\begin{equation}\label{eq: conv clip prob main eq}
    \frac1T \sum \limits_{k=0}^{T-1} \langle \hat{g}_{k+1}, x_k  -x^* \rangle \leq \frac12 \frac{R_{0}^{2} }{\nu T} + \frac{\nu}{2} \frac{1}{T} \sum \limits_{k=0}^{T-1} \|\hat{g}_{k+1}\|^{2}_q. 
\end{equation} 
Let's define random variable $Z_k = \|\hat{g}_{k+1}\|_q$ and notice that $|Z_k| \leq c$ by definition of clipping and $\EE[Z_i^2] \leq 4\sigma_{q,\kappa}^{1+\kappa} c^{1 - \kappa}$ by  \eqref{clip grad  var} from clipped gradient properties Lemma~\ref{clip_grad_properties intro}. Thus, we can apply  Lemma~\ref{Sum of squares MSD} and with probability at least $1 - \delta$ bound mean sum of the clipped gradients second moments  
\begin{equation}\label{eq: conv clip prob part 1}
    \frac{1}{T} \sum \limits_{k=0}^{T-1} \|\hat{g}_{k+1}\|^{2}_q \leq 12\sigma_{q,\kappa}^{1+\kappa} c^{1 - \kappa} \log\left( \frac4\delta\left[ \log\left(\sqrt{T}\right)+2\right]^2\right) + \frac{20}{T}c^2\log\left(\frac{12}{\delta}\right).\end{equation}
The left part of \eqref{eq: conv clip prob main eq} can be rewritten as
\begin{align*}
  &\frac1T \sum \limits_{k=0}^{T-1}\langle \hat{g}_{k+1}, x_k  -x^* \rangle = \frac1T \sum \limits_{k=0}^{T-1}\langle \hat{g}_{k+1} - \nabla \hat{f}_\tau(x_k), x_k  -x^* \rangle + \frac1T \sum \limits_{k=0}^{T-1}\langle \nabla \hat{f}_\tau(x_k), x_k  -x^* \rangle \\
  &=\underbrace{\frac1T \sum \limits_{k=0}^{T-1}\langle \hat{g}_{k+1} - \EE_{|\leq k}[\hat{g}_{k+1}]  , x_k  -x^* \rangle}_\text{\circled{1}} + \underbrace{\frac1T \sum \limits_{k=0}^{T-1}\langle \EE_{|\leq k}[\hat{g}_{k+1}] - \nabla \hat{f}_\tau(x_k), x_k  -x^* \rangle}_\text{\circled{2}}\\
  &+ \underbrace{\frac1T \sum \limits_{k=0}^{T-1}\langle \nabla \hat{f}_\tau(x_k), x_k  -x^* \rangle}_\text{\circled{3}}. 
\end{align*}

In the \circled{1} term we can proof that this is the sum of the martingale sequence difference. Indeed, we notice that $x_k$ is fixed when we take $\EE_{|\leq k}$ and martingale property holds true, i.e.
$$\EE_{|\leq k}[\langle \hat{g}_{k+1} - \EE_{|\leq k}[\hat{g}_{k+1}], x_k  - x^* \rangle] = 0.$$
By  Lemma \ref{clip_grad_properties intro} we bound each element of martingale sequence 
$$|\langle \hat{g}_{k+1} - \EE_{|\leq k}[\hat{g}_{k+1}], x_k  -x^* \rangle| \leq \|\hat{g}_{k+1} - \EE_{|\leq k}[\hat{g}_{k+1}]\|_q \|x_k  -x^*\|_{p} \leq 2c \cdot \|x_k  -x^*\|_{p}.$$
And by  \eqref{clip grad  var} from Lemma \ref{clip_grad_properties intro}  we bound expectation of square of each element
$$\EE\left[|\langle \hat{g}_{k+1} - \EE_{|\leq k}[\hat{g}_{k+1}], x_k  -x^* \rangle|^2\right] \leq 4\sigma_{q}^{1+\kappa} c^{1 - \kappa} \cdot \|x_k  -x^*\|^2_{p}.$$
Lets notice that $\Psi_{p}$ is $\left( 1, 2\right)$-uniformly convex function w.r.t. $p$ norm. Then by definition \eqref{def: uniform convex} we bound

$$\|x_k - x^*\|_{p} \leq \left(2 D_{\Psi_{p}}(x_k, x^*) \right)^\frac12 \leq \sup_{x,y \in \mathcal{X}} \left(2 D_{\Psi_{p}}(x, y) \right)^\frac12 = \mathcal{D}_\Psi, $$
and  estimate $\|x_k - u\|_{p} \leq \mathcal{D}$  for all $ u \in\mathcal{X}$.
Hence, we can apply  Bernstein's inequality Lemma \ref{Bernstein inequality} and get  with probability at least $1 - \delta$ and $\mu = \frac1T$ that
\begin{equation}\label{eq: conv clip prob part 2}\frac1T \sum \limits_{k=0}^{T-1}|\langle \hat{g}_{k+1} - \EE_{|\leq k}[\hat{g}_{k+1}]  , x_k  -x^* \rangle| \leq \frac{4c \mathcal{D}_\Psi}{T}\log\frac1\delta + \frac{\sqrt{4\sigma_{q}^{1+\kappa} c^{1 - \kappa}}}{\sqrt{T}} \mathcal{D}_\Psi^2 \sqrt{2\log \frac1\delta}.\end{equation}
For the \circled{2} we use bound of D term from \eqref{D and E terms in clip} in the proof of Theorem~\ref{Clip Conv}
\begin{equation}\label{eq: conv clip prob part 3}|\langle \EE_{|\leq k}[\hat{g}_{k+1}] - \nabla \hat{f}_\tau(x_k), x_k  -x^* \rangle| \leq\left( \frac{\sigma_{q}^{1+\kappa}}{c^\kappa} + \Delta\frac{\sqrt{d}}{\tau}\right)\mathcal{D}_\Psi.\end{equation}
For the \circled{3} we use already obtained bound \eqref{loss high prob}
\begin{equation}\label{eq: conv clip prob part 4}f(\overline{x}_T) - f(x^*) - 2M_2\tau \leq \frac1T \sum \limits_{k=0}^{T-1}\langle \nabla \hat{f}_\tau(x_{k}) , x_k  -x^* \rangle. \end{equation}
Putting \eqref{eq: conv clip prob part 1}, \eqref{eq: conv clip prob part 2}, \eqref{eq: conv clip prob part 3}, \eqref{eq: conv clip prob part 4}  in \eqref{eq: conv clip prob main eq}, we get with probability at least $1-\delta$ that    
\begin{align}\label{overall sum from grad clip high prob}    
    f(\overline{x}_T) - f(x^*) &\leq 2M_2\tau + \left( \frac{\sigma_{q}^{1+\kappa}}{c^\kappa} + \Delta\frac{\sqrt{d}}{\tau}\right)\mathcal{D}_\Psi+\frac12  \frac{R_{0}^{2} }{\nu T} \notag \\
    &+ \frac{\nu}{2} \left[12\sigma_{q}^{1+\kappa} c^{1 - \kappa} \log\left( \frac4\delta\left[ \log\left(\sqrt{T}\right)+2\right]^2\right)\right] \notag \\
    &+ \frac{\nu}{2}\frac{20}{T}c^2\log\left(\frac{12}{\delta}\right) +  \frac{4c\mathcal{D}_\Psi}{T}\log\frac1\delta + \frac{\sqrt{4\sigma_{q}^{1+\kappa} c^{1 - \kappa}}}{\sqrt{T}} \mathcal{D}_\Psi^2 \sqrt{2\log \frac1\delta}. \end{align}
    Next we select optimal parameter in order to minimize upper bound.
    Choosing $c = T^{\frac{1}{(1+\kappa)}}\sigma_{q}$ and putting it in \eqref{overall sum from grad clip high prob}, we get
  \begin{align}\label{after c high prob}
         f(\overline{x}_T) - f(x^*) &\leq 2M_2\tau + \left( \frac{\sigma_{q}}{T^{\frac{\kappa}{1+\kappa}}} + \Delta\frac{\sqrt{d}}{\tau}\right)\mathcal{D}_\Psi +\frac12  \frac{R_0^{2} }{\nu T} \notag \\
    &+ \frac\nu2\left[12\sigma_{q}^{2} T^{\frac{1-\kappa}{(1+\kappa)}} \log\left( \frac4\delta\left[ \log\left(\sqrt{T}\right)+2\right]^2\right)\right] \notag \\ &+\frac\nu2\frac{20\sigma_{q}^{2}}{T^\frac{\kappa-1}{1+\kappa}}\log\left(\frac{12}{\delta}\right) + \frac{4\sigma_{q} \mathcal{D}_\Psi}{T^{\frac{\kappa }{1+\kappa}}}\log\frac1\delta + \frac{2\sigma_{q} }{T^{\frac{\kappa}{1+\kappa}}} \mathcal{D}_\Psi \sqrt{2\log \frac1\delta}.
        \end{align}
    Then we define $\widetilde{\delta}^{-1} = \frac4\delta\left[ \log\left(\sqrt{T}\right)+2\right]^2$, choose  $\nu = \frac{\mathcal{D}_\Psi}{c}$, put it in \eqref{after c high prob} and obtain
    \begin{align}\label{grad clip high prob sum after nu and c}   
    f(\overline{x}_T) - f(x^*) 
    &\leq 2M_2\tau + \left( \frac{\sigma_{q}}{T^{\frac{\kappa}{1+\kappa}}} + \Delta\frac{\sqrt{d}}{\tau}\right)\mathcal{D}_\Psi + \frac {\mathcal{D}_\Psi\sigma_{q}}{2T^{\frac{\kappa }{1+\kappa}}} \left[1 + 12\log\frac{1}{\widetilde{\delta}} + 20\log\frac4\delta\right] \notag \\
      &+\frac{4\sigma_{q} \mathcal{D}_\Psi}{T^{\frac{\kappa }{1+\kappa}}}\log\frac1\delta + \frac{2\sigma_{q} }{T^{\frac{\kappa}{1+\kappa}}} \mathcal{D}_\Psi \sqrt{2\log \frac1\delta}.\end{align}
   Simplifying \eqref{grad clip high prob sum after nu and c},  we get
\begin{align*}
    f(\overline{x}_T) - f(x^*) &\leq 2M_2\tau  + \Delta\frac{\sqrt{d}}{\tau} \mathcal{D}_\Psi \notag \\
    &+ \frac {\mathcal{D}_\Psi\sigma_{q}}{2T^{\frac{\kappa }{1+\kappa}}} \left[3 + 8\log\frac1\delta + 12\log\frac{1}{\widetilde{\delta}} + 20\log\frac4\delta\ + 4\sqrt{2\log\frac1\delta}\right].
\end{align*}
Finally, we get explicit bound of $\sigma_q$ with Lemma \ref{lem: Jensen for norm}
$$\sigma_q \leq 2\left(\frac{\sqrt{d}}{2^{1/4}} a_{q}M_2\right) + 2\left(\frac{da_{q}\Delta}{\tau}\right),$$
And set optimal $\tau$
$$\tau =  \sqrt{\frac{\sqrt{d}\Delta\mathcal{D}_{\Psi} + 2\beta \mathcal{D}_{\Psi}da_{q}\Delta T^{-\frac{\kappa}{1+\kappa}}}{2M_2}}.$$

\end{proof}

\section{Sketch of Proof of \texttt{ZO-Restarts} Convergence} \label{sec: restarts proofs}
\begin{proof}[Proof of  Theorems \ref{restart Conv robust}, \ref{restart Conv clip}]
In this proof $\widetilde{O}(\cdot)$ denotes $\log d$ factor.

\textbf{Step $1$: \texttt{ZO-RSMD} in Expectation.}

Now $x_0$ in Algorithm \ref{alg:Robust} can be chosen in stochastic way.

Similarly to proof of Theorem \ref{Robust Conv} but with $\nu = \frac{\EE\left[D_{\Psi_{p}}(x^*,x_0)\right]^{\frac{1}{1+\kappa}}}{\sigma_{q} }T^{-\frac{1}{1+\kappa}} $ and bound $R_0 \leq \mathcal{D}_\Psi$ one can get from \eqref{smd before nu}
\begin{equation}\label{212}
    \EE[f(\overline{x}_T)] - f(x^*) \leq 2M_2\tau + \frac{\sqrt{d}\Delta }{\tau}\mathcal{D}_\Psi + 2\EE\left[D_{\Psi_{p}}(x^*,x_0)\right]^{\frac{\kappa}{1+\kappa}}\sigma_{q} T^{-\frac{\kappa}{1+\kappa}}.\end{equation}
Under obligatory condition $\Delta \leq  \frac{\sigma_q^2 \EE\left[D_{\Psi_{p}}(x^*,x_0)\right]^{\frac{\kappa}{1+\kappa}}}{M_2\sqrt{d} T^{\frac{2\kappa}{1+\kappa}}}$ picking $\tau = \frac{\sigma_q\EE\left[D_{\Psi_{p}}(x^*,x_0)\right]^{\frac{\kappa}{1+\kappa}}}{M_2 T^{\frac{\kappa}{1+\kappa}}}$, we  obtain from \eqref{212} estimate

\begin{equation}\label{780}
    \EE[f(\overline{x}_T)] - f(x^*) \leq   \left(2 + 1 + 2\right) \frac{\sigma_q \EE\left[D_{\Psi_{p}}(x^*,x_0)\right]^{\frac{\kappa}{1+\kappa}}}{T^{\frac{\kappa}{1+\kappa}}}.
\end{equation}
In $\sigma_{q}$ $\tau$-depending term  has $T^{\frac{-2\kappa}{1+\kappa}}$ decreasing rate, so we neglect it. Next, let's use fact that $D_{\Psi_{p}}(x^*,x_0) = \widetilde{O}(\|x_0 -x^*\|_{p}^{\frac{1+\kappa}{\kappa}})$ from  \cite{gasnikov2018universal}(Remark 3) and denote $R_k = \EE\left[\|\overline{x}_k - x^*\|^\frac{1+\kappa}{\kappa}_{p}\right]^{\frac{\kappa}{1 + \kappa}}$ .

Under $r$-growth Assumption \ref{r-growth} we bound $\EE[f(\overline{x}_T)] - f(x^*)$ from both sides
\begin{equation}\label{two terms sum from restart} \frac{\mu_r
}{2}\EE\left[\|\overline{x}_T - x^*\|_{p}^r\right] \leq\EE[f(\overline{x}_T)] - f(x^*) \leq \widetilde{O}\left(R_0 \frac{\sigma_q}{T^{\frac{\kappa}{1+\kappa}}}\right).
\end{equation}
Due to Jensen's inequality which we can apply since $r \geq \frac{1+\kappa}{\kappa}$ we rewrite \eqref{two terms sum from restart} in order to obtain $R_1$ in it as 
\begin{equation}\label{after all choses}
    \frac{\mu_r
}{2}\EE\left[\|\overline{x}_T - x^*\|^\frac{1+\kappa}{\kappa}_{p}\right]^{r/\frac{1+\kappa}{\kappa}} \leq \frac{\mu_r
}{2}\EE\left[\|\overline{x}_T - x^*\|_{p}^r\right] \leq \widetilde{O}\left(R_0 \frac{\sigma_q}{T^{\frac{\kappa}{1+\kappa}}}\right).\end{equation}
Let's find out after how many iterations $R_0$ value halves
\begin{equation}\label{r halves}
    \frac{\mu_r}{2}R_1^{r} \leq \widetilde{O}\left(R_0 \frac{\sigma_q}{T^{\frac{\kappa}{1+\kappa}}}\right) \leq \frac{\mu_r}{2}\left(\frac{R_0}{2}\right)^{r}. \end{equation}
From right inequality of \eqref{r halves} we obtain number of iterations for one stage 
$$T_1 \geq \widetilde{O}\left(\left(\frac{2^{(1+r)}\sigma_{q}}{\mu_r}\right)^{\frac{1+\kappa}{\kappa}} \frac{1}{R_0^{\frac{(r-1)(1+\kappa)}{\kappa}}}\right).$$
For convenience we define $A \eqdef \frac{2^{(1+r)}\sigma_{q}}{\mu_r} $.

After $T_1$ iterations we restart algorithm with starting point $x_0 = \overline{x}_{T_1}$ and $R_k = R_{k-1}/2= R_0/2^k$. 

After $N$ restarts total number of iterations $T$ will be
\begin{eqnarray}
    \label{Tk}
    T = \sum \limits_{k=1}^{N} T_k &=& \widetilde{O}\left(\frac{A^{\frac{1+\kappa}{\kappa}}}{R_0^{\frac{(r-1)(1+\kappa)}{\kappa}}} \sum\limits_{k=0}^{N-1} 2^{k\left(\frac{(r-1)(1+\kappa)}{\kappa}\right)} \right) \notag \\
    &=&\widetilde{O}\left(\frac{A^{\frac{(1+\kappa)}{\kappa}}}{R_0^{\frac{(r-1)(1+\kappa)}{\kappa}}}  \left[2^{N\left(\frac{(r-1)(1+\kappa)}{\kappa}\right)} - 1\right]\right).\label{eq:geom_sum_of_T_k}
\end{eqnarray}
On the last stage we can get bound with number of restarts $N$ in it
\begin{eqnarray}\EE[f(x_{ \rm final})] - f(x^*) \leq \varepsilon  &=& \widetilde{O}\left(R_{N-1} \frac{\sigma_q}{T_{N}^{\frac{\kappa}{1+\kappa}}}\right) \notag \\
&\leq & \widetilde{O}\left(\frac{\mu_r}{2}\left(\frac{R_{N-1}}{2}\right)^{r}\right) \leq \widetilde{O}\left(\frac{\mu_r}{2}\frac{R_0^r}{2^{(N-1)r}}\right). \notag
\end{eqnarray}
Consequently, in order to get $\varepsilon$ accuracy  we need $N$ restarts and total number of iterations $T$, where  
\begin{equation}\label{total restarts}    
N =\widetilde{O}\left(\frac{1}{r}\log_2\left(\frac{\mu_rR_0^r}{2\varepsilon}\right)\right), 
\end{equation}

\begin{equation}\label{total iter}
    T = \widetilde{O}\left( \left[ \frac{2^\frac{r^2 + 1}{r}\sigma_q}{\mu_r^{1/r}} \cdot \frac{1}{\varepsilon^{\frac{(r-1)}{r}}}\right]^\frac{1+\kappa}{\kappa} \right), \quad T_k = \widetilde{O}\left( \left[ \frac{\sigma_q 2^{(1+r)}}{\mu_r R_0^{r-1}} 2^{k(r-1)}\right]^\frac{1+\kappa}{\kappa} \right).  \end{equation}

In each restart section we get different bounds for noise absolute value. From  $T_k$ formula from \eqref{Tk} we get bound
\begin{equation}\label{Deltak}
    \Delta_k = \widetilde{O}\left(\frac{\mu_r^2 R_0^{(2r - 1)}}{M_2 \sqrt{d}} \frac{1}{2^{k(2r-1)}}\right) .
\end{equation}
Hence, $\Delta_k$ will be the smallest on the last iteration, when $k = N$, i.e.
$$\Delta_N =  \widetilde{O}\left(\frac{\mu_r^{1/r} }{M_2 \sqrt{d}} \varepsilon^{(2-1/r)}\right).$$
\textbf{Step $2$: \texttt{ZO-Clip-SMD} in Expectation.}

Now $x_0$ in Algorithm \ref{alg:clip} can be chosen in stochastic way.

Similarly to proof of Theorem \ref{Clip Conv}  but with $\nu^* = \EE\left[D_{\Psi_{p}}(x^*,x_0)\right]^\frac{1}{2} \left( \frac{1}{4T\sigma_{q}^{1+\kappa} }\right)^{\frac{1}{1+\kappa}}, c^* = {\EE\left[D_{\Psi_{p}}(x^*,x_0)\right]^\frac{1}{2}}/{\nu^*} $  one can get from \eqref{clip before nu}
\begin{equation}\label{213}
    \EE[f(\overline{x}_T)] - f(x^*) \leq 2M_2\tau + \Delta\frac{\sqrt{d}}{\tau} \mathcal{D}_\Psi+2 \frac{\sigma_{q}\EE\left[D_{\Psi_{p}}(x^*,x_0)\right]^\frac{1}{2}  }{T^\frac{\kappa}{1+\kappa}}.\end{equation}
Under obligatory condition $\Delta \leq \frac{\sigma_q^2 \EE\left[D_{\Psi_{p}}(x^*,x_0)\right]^{\frac{1}{2}}}{M_2\sqrt{d} T^{\frac{2\kappa}{1+\kappa}}}$ picking $\tau = \frac{\sigma_q\EE\left[D_{\Psi_{p}}(x^*,x_0)\right]^{\frac{1}{2}}}{M_2 T^{\frac{\kappa}{1+\kappa}}}$, we  obtain from \eqref{213} estimate

\begin{equation}\notag
    \EE[f(\overline{x}_T)] - f(x^*) \leq \left(2 + 1+ 2\right)\frac{\sigma_q \EE\left[D_{\Psi_{p}}(x^*,x_0)\right]^\frac{1}{2}}{T^{\frac{\kappa}{1+\kappa}}}.
\end{equation}
In $\sigma_{q}$ $\tau$-depending term  has $T^{\frac{-2\kappa}{1+\kappa}}$ decreasing rate, so we neglect it. Next, let's use fact that $D_{\Psi_{p}}(x^*,x_0) = \widetilde{O}(\|x_0 -x^*\|_{p}^2)$ from  \cite{gasnikov2018universal}(Remark 3) and denote $R_k = \EE\left[\|\overline{x}_k - x^*\|^2_{p}\right]^{\frac{1}{2}}$.

Under $r$-growth Assumption \ref{r-growth} we bound  $\EE[f(\overline{x}_T)] - f(x^*)$ from both sides
$$\frac{\mu_r
}{2}\EE\left[\|\overline{x}_T - x^*\|_{q^*}^r\right] \leq\EE[f(\overline{x}_T)] - f(x^*) \leq \widetilde{O}\left( R_0\frac{\sigma_q}{T^{\frac{\kappa}{(1+\kappa)}}}\right).$$
Due to Jensen's inequality which we can apply since $r \geq 2$ we  obtain
$$\frac{\mu_r
}{2}\EE\left[\|\overline{x}_T - x^*\|^2_{q^*}\right]^{r/2} \leq \frac{\mu_r
}{2}\EE\left[\|\overline{x}_T - x^*\|_{q^*}^r\right] \leq \widetilde{O}\left(R_0\frac{\sigma_q}{T^{\frac{\kappa}{(1+\kappa)}}}\right).$$
Next part of the proof is the same from \textbf{Step $1$} starting from \eqref{after all choses}.
Analogically, we get the same $T_2$,$N_2$ and noise bounds from \eqref{total iter}, \eqref{total restarts} and \eqref{Deltak} correspondingly.

\textbf{Step $3$: \texttt{ZO-Clip-SMD} in High Probability.}

Now $x_0$ in Algorithm \ref{alg:clip} can be chosen in stochastic way.

Important moment about convergence in high probability in restart setup is to control final probability. Let number of restarts be $N_3$,  if each restart has probability to be in bounds at least $1 -\delta/N_3$ then final probability to be in bounds will be greater than $1 - \delta$ which is probability of  'all restarts to be in bounds'. Usually $N_3 \sim \log(\frac1\varepsilon)$, thus $$\log\frac{N_3}{1} = \log\log \frac1\varepsilon \ll \log\frac{1}{\delta} \frac{1}{\varepsilon^\frac{1+\kappa}{\kappa}}.$$
It means that we can use $\log\frac1\delta$ instead of $\log\frac{N_3}\delta$.

Similarly to proof of Theorem \ref{Clip Conv prob}  but  $\nu^* = \left[D_{\Psi_{p}}(x^*,x_0)\right]^{1/2} \left( \frac{1}{T\sigma_{q}^{1+\kappa} }\right)^{\frac{1}{1+\kappa}}, c^* = {\EE\left[D_{\Psi_{p}}(x^*,x_0)\right]^\frac{1}{2}}/{\nu^*} $  one can get from \eqref{after c high prob} with probability at least $1-\delta/N_3$
\begin{align*}
   f(\overline{x}_T) - f(x^*) &\leq 2M_2\tau  + \Delta\frac{\sqrt{d}}{\tau} \mathcal{D}_\Psi \\
   &+\frac { \left[D_{\Psi_{p}}(x^*,x_0)\right]^{1/2}\sigma_{q}}{2T^{\frac{\kappa }{1+\kappa}}} \left[3 + 8\log\frac1\delta + 12\log\frac{1}{\widetilde{\delta}} + 20\log\frac4\delta\ + 4\sqrt{2\log\frac1\delta}\right]. 
\end{align*}
Denote $\widetilde{\delta}^{-1} = \frac4\delta\left[ \log\left(\sqrt{T}\right)+2\right]^2, \beta = \left[3 + 8\log\frac1\delta + 12\log\frac{1}{\widetilde{\delta}} + 20\log\frac4\delta\ + 4\sqrt{2\log\frac1\delta}\right]$. 

Under obligatory condition $\Delta \leq \frac{\beta^2 \sigma_q^2 D^{\frac{1}{2}}_{\Psi_{p}}(x^*,x_0)}{M_2\sqrt{d} T^{\frac{2\kappa}{1+\kappa}}}$ picking $\tau = \frac{\beta \sigma_q D^{\frac{1}{2}}_{\Psi_{p}}(x^*,x_0)}{M_2 T^{\frac{\kappa}{1+\kappa}}}$, we  obtain estimate
    $$f(\overline{x}_T) - f(x^*) \leq\left(2 + 1 + 1\right)\frac{\sigma_q \beta\left[D_{\Psi_{p}}(x^*,x_0)\right]^\frac{1}{2}}{T^{\frac{\kappa}{1+\kappa}}}.$$
In $\sigma_{q}$ $\tau$-depending term  has $T^{\frac{-2\kappa}{1+\kappa}}$ decreasing rate, so we neglect it. Next, let's use fact that $D_{\Psi_{p}}(x^*,x_0) = \widetilde{O}(\|x_0 -x^*\|_{p}^2)$ from  \cite{gasnikov2018universal}(Remark 3) and denote $R_k = \|\overline{x}_k - x^*\|_{p}$.

Under $r$-growth Assumption \ref{r-growth} we get
$$\frac{\mu_r}{2}\|\overline{x}_T - x^*\|_{p}^r \leq f(\overline{x}_T) - f(x^*) \leq \widetilde{O}\left( R_0\frac{\sigma_q \beta}{T^{\frac{\kappa}{(1+\kappa)}}}\right).$$

For $r>1$ next part of the proof is the same from \textbf{Step $1$} starting from \eqref{after all choses} with 
$$A \eqdef \frac{2^{(1+r)}\beta \sigma_{q}}{\mu_r}. $$
Analogically, we get $T_3$,$N_3$ and noise bounds from \eqref{total iter}, \eqref{total restarts} and \eqref{Deltak} correspondingly.
\begin{equation}\notag
N =\widetilde{O}\left(\frac{1}{r}\log_2\left(\frac{\mu_rR_0^r}{2\varepsilon}\right)\right), 
\end{equation}

\begin{equation}\label{total iter step 3}
    T = \widetilde{O}\left( \left[ \frac{2^\frac{r^2 + 1}{r}\sigma_q \beta}{\mu_r^{1/r}}  \frac{1}{\varepsilon^{\frac{(r-1)}{r}}}\right]^\frac{1+\kappa}{\kappa} \right), \quad T_k = \widetilde{O}\left( \left[ \frac{\sigma_q \beta 2^{(1+r)}}{\mu_r R_0^{r-1}} 2^{k(r-1)}\right]^\frac{1+\kappa}{\kappa} \right).  \end{equation}
In each restart section we get different bounds for noise absolute value. From  $T_k$ formula from \eqref{total iter step 3}
\begin{equation}\notag
    \Delta_k = \widetilde{O}\left(\frac{\mu_r^2 R_0^{(2r - 1)}}{M_2 \sqrt{d}} \frac{1}{2^{k(2r-1)}}\right) .
\end{equation}
Hence, $\Delta_k$ will be the smallest on the last iteration, when $k = N$, i.e. 
$$\Delta_N =  \widetilde{O}\left(\frac{\mu_r^{1/r} }{M_2 \sqrt{d}} \varepsilon^{(2-1/r)}\right).$$

In case of $r = 1$ number of iterations $T_k$ at step $k$ from \eqref{total iter step 3} changes as
$$T_k = A^{\frac{1+\kappa}{\kappa}},$$
and we do not need to apply the formula \eqref{eq:geom_sum_of_T_k} for the sum of the geometric progression. Thus, total number of iterations $T$ after $N = \widetilde{O}\left(\log_2\left(\frac{\mu_rR_0}{2\varepsilon}\right)\right)$ restarts equals
$$T = \sum \limits_{k=1}^N T_k = N A^{\frac{1+\kappa}{\kappa}} = \widetilde{O}\left(\left[\frac{\beta \sigma_{q}}{ \mu_r}\right]^{\frac{1+\kappa}{\kappa}} \log_2\left(\frac{\mu_rR_0}{2\varepsilon}\right)\right).$$

\end{proof}
\end{document}